\newtheorem{theorem}{Theorem}[section]
\newtheorem{prop}[theorem]{Proposition}
\newtheorem{lemma}[theorem]{Lemma}
\newtheorem{cor}[theorem]{Corollary}
\renewenvironment{proof}{\prepf\rm}{\endprepf}
\theoremstyle{definition}
\newtheorem{prerk}[theorem]{Remark}
\theoremstyle{definition}
\theoremstyle{definition}
\newtheorem{ntn}[theorem]{Notation}
\theoremstyle{remark}
\renewcommand{\qed}{\hfill$\Box$}
\newcommand{\End}{\mathop{\mathrm{End}}\nolimits}
\newcommand{\homeo}{\mathop{\mathrm{Homeo}}\nolimits}
\newcommand{\core}{\mathop{\mathrm{core}}\nolimits}
\newcommand{\xn}{X_{n}}
\newcommand{\xnp}{X_{n}^{+}}
\newcommand{\xns}{\xn^{\ast}}
\newcommand{\xnz}{X_n^{\Z}}
\newcommand{\xnN}{X_n^{-\N}}
\newcommand{\xnn}{X_n^{\N}}
\newcommand{\spn}[1]{\widetilde{\mathcal{P}_{#1}}}
\newcommand{\pn}[1]{\mathcal{P}_{#1}}
\newcommand{\shn}[1]{\widetilde{\mathcal{H}_{#1}}}
\newcommand{\hn}[1]{\mathcal{H}_{#1}}
\newcommand{\gen}[1]{\langle #1 \rangle}
\newcommand{\out}[1]{\mbox{Out}({#1})}
\newcommand{\shift}[1]{\sigma_{#1}}
\newcommand{\Shift}[1]{\mathfrak{S}_{#1}}
\newcommand{\rev}[1]{\overleftarrow{#1}}
\newcommand{\im}[1]{\mbox{image}{(#1)}}
\newcommand{\Z}{\mathbb{Z}}
\newcommand{\N}{\mathbb{N}}
\newcommand{\spnprod}[1]{\ast_{\spn{n}}}
\newcommand{\cone}[1]{[#1]}
\newcommand{\CCnr}{\mathfrak{C}_{n,r}}
\DeclareMathOperator{\Sym}{Sym}
\newcommand{\sym}[1]{\Sym(#1)}
\DeclareMathOperator{\Aut}{Aut}
\newcommand{\aut}[1]{\Aut(#1)}
\newcommand{\dual}[1]{{#1}^{\vee}}
\newcommand{\dpi}{\dual{\pi}}
\newcommand{\dlambda}{\dual{\lambda}}
\newcommand{\seteq}{:=}
\begin{document}
\begin{frontmatter}[classification=text]

\title{Automorphisms of shift spaces and the Higman--Thompson groups: the one-sided case} 

\author[collin]{Collin Bleak}
\author[peter]{Peter J. Cameron}
\author[shayo]{Feyisayo Olukoya\thanks{The authors are all grateful for support from EPSRC research grant EP/R032866/1;
the third author also gratefully acknowledges support from Leverhulme Trust Research Project Grant RPG-2017-159}
}

\begin{abstract}
Let $1 \le r < n$ be integers.  We give a proof that the group $\aut{X_{n}^{\N}, \sigma_{n}}$ of automorphisms of the one-sided shift on $n$ letters embeds naturally as a subgroup $\hn{n}$ of the outer automorphism group $\out{G_{n,r}}$ of the Higman--Thompson group $G_{n,r}$. From this, we can represent the elements of $\aut{X_{n}^{\N}, \sigma_{n}}$ by finite state non-initial transducers admitting a very strong synchronizing condition.

 Let $H \in \mathcal{H}_{n}$ and write $|H|$ for the number of states of the minimal transducer representing $H$. We show that $H$ can be written as a product of at most $|H|$ torsion elements.  This result strengthens a similar result of Boyle, Franks and Kitchens, where the decomposition involves more complex torsion elements and also does not support practical \textit{a priori} estimates of the length of the resulting product.

We also explore the number of foldings of de Bruijn graphs and give a counting result for these for word length 2 and alphabet size $n$.

Finally, we offer new proofs of some known results about $\aut{X_{n}^{\N}, \sigma_{n}}$.

\end{abstract}
\end{frontmatter}

\section{Introduction}

Let $1\leq r<n$ be integers.  In this article, we prove that the group $\aut{\xnn, \shift{n}}$ of automorphisms of the one-sided full shift is isomorphic to a subgroup $\hn{n}$ of the group of outer automorphisms of the Higman--Thompson  groups $G_{n,r}$.  Using this embedding we are able to study $\aut{\xnn, \shift{n}}$ from a new perspective.

Fix an alphabet $X_{n}:=\{0,1,2\ldots, n-1\}$ of size $n$. The shift map $\shift{n}$ on the Cantor space of infinite sequences $X_{n}^{\N}$ is the map which shifts a sequence to the left; i.e., a point that was formerly at index $i+1$ now occupies the index $i$. An automorphism of the dynamical system $(\xn^{\N}, \shift{n})$, is a homeomorphism of $\xn^{\N}$ that commutes with the map $\shift{n}$. The collection of all such automorphisms forms a group $\Aut(\xn^{\N}, \shift{n})$. We refer to this group as the group of automorphisms of the shift dynamical system.  

The group $\aut{\xnn, \shift{n}}$ has been well studied (although many questions about it remain). For instance, the seminal paper of Hedlund \cite{Hedlund69} shows that elements of this group can be represented by \emph{sliding block codes} requiring no future information. 
In the same paper, as mentioned above, it is shown that if $n=2$, this group is isomorphic to the cyclic group of order 2; in the paper \cite{BoyleFranksKitchens} the finite subgroups of $\aut{\xn^{\N}, \shift{n}}$ are characterized,  and a full description of the numbers which arise as the order of some torsion element is also given. 

The paper \cite{AutGnr} gives a description of $\out{G_{n,r}}$ as a particular group of non-initial \emph{transducers}.  Note that here a transducer is a finite state machine where each state reads an element from an input alphabet, possibly changes state, and writes a string from an output alphabet. Let $T$ be such a transducer. We call the number of states of $T$ the {size} of $T$ and denote this by $|T|$.

While realizing elements of $\aut{X_n^\N,\sigma_n}$ by transducers has been seen before (see  \cite{BoyleFranksKitchens, GNSenglish}), our realization takes advantage of extra structure arising from a small category of ``folded'' de Bruijn graphs. These are a special set of labeled directed graphs each admitting a synchronizing condition stronger than that appearing in the literature around the Road Colouring Problem and the \u{C}ern\'y Conjecture. We refer to these as \emph{strongly synchronizing automata}, below. 
Using this structure, we give a combinatorial proof of the following theorem (see Theorem \ref{Thm:decomposition} for the more detailed statement). 

 \begin{theorem}\label{Thm:decompositionIntro}
Let $n>1$ be an integer.    An element $T \in \hn{n} \cong \aut{\xnn, \shift{n}}$ can be written as a product of at most $|T|$ elements of $\hn{n}$ arising from automorphisms of directed graphs which are quotients of the underlying graph of $T$.
\end{theorem}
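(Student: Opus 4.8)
The plan is to argue by induction on $|T|$, the number of states of the minimal strongly synchronizing transducer representing $T$, peeling off a single graph automorphism at each step and cutting the state count by at least one. To set the stage I would first fix the static picture: the underlying graph $\Gamma$ of the minimal transducer of $T$ is a folded de Bruijn graph, a strongly synchronizing automaton on $|T|$ states, and it sits at the top of a tower of folding quotients $\Gamma \twoheadrightarrow \Gamma' \twoheadrightarrow \cdots$, each obtained by identifying states that have become indistinguishable. Every automorphism of a directed graph appearing in this tower lies in a finite group and hence determines a torsion element of $\hn{n}$, so the assertion is exactly that $T$ decomposes as a product of $|T|$ such elements, \emph{all} drawn from quotients of this one graph $\Gamma$. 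Using the identification of $\hn{n}$ with synchronizing transducers recalled above, I would record how $T$ organizes its local output permutations state by state, since this is the data from which each factor is read off.

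The core is a reduction lemma: there is a graph automorphism $\alpha$ of $\Gamma$ (equivalently of a suitable quotient) for which the minimal transducer of $\alpha^{-1}T$ has strictly fewer than $|T|$ states. To build $\alpha$ I would isolate the finest layer of the action of $T$ — the way $T$ permutes the outgoing edges at the states that are farthest from the synchronizing core, i.e. the dependence on the oldest remembered symbol — and take $\alpha$ to be the graph automorphism realizing precisely that layer. Cancelling it should render a pair of deep states of $\alpha^{-1}T$ indistinguishable, so that minimization collapses them and the residual transducer is supported on a proper folding quotient of $\Gamma$. Iterating the lemma, after at most $|T|-1$ reductions one reaches a transducer on a single state, which is itself a graph automorphism of the terminal quotient; collecting the cancelled factors then exhibits $T$ as a product of at most $|T|$ graph automorphisms, each of a quotient of $\Gamma$, as required.

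The main obstacle is the reduction lemma, and within it the interaction between composition and state-minimization. The naive product automaton for $\alpha^{-1}T$ can have as many as $|\Gamma|\cdot|\alpha|$ states, so I need genuinely tight control to force its minimization below $|T|$. The two points demanding care are, first, that the chosen $\alpha$ really does compel at least one pair of residual states to merge under the synchronization/minimization procedure, and second, that the resulting minimal graph is again a folding quotient of $\Gamma$ rather than some unrelated automaton, so that the invariant "every factor comes from a quotient of $\Gamma$" is preserved across the induction. I expect both to fall out of the strong synchronizing property, which rigidly pins down how states are separated by finite input words and thus how they merge once that separating behaviour is cancelled; but turning this qualitative merge into a quantitative guarantee sharp enough to yield exactly the bound $|T|$ is where the real work will lie.
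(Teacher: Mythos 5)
Your high-level plan coincides with the paper's: iteratively peel off one torsion factor per step, each arising from an automorphism of a quotient of the underlying graph, so that the state count strictly decreases and after at most $|T|-1$ reductions only a single-state permutation remains. The gap is that your ``reduction lemma'' is exactly where the theorem's content lies, and your sketch both leaves it unproved and pairs its two ingredients incompatibly. In the paper, the states to be merged are a pair $q_1,q_2\in Q_T$ with \emph{identical transition functions} (such pairs exist by strong synchronization), and the cancelling factor is \emph{post}-composed and built not on the folding tower of $\Gamma$ but on a term $B_j$ of the synchronizing sequence of $B$, the underlying automaton of $T^{-1}$ (still a quotient of the underlying graph of $T$, since $G_B\cong G_A$). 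This choice of side is precisely what dissolves your worry about the product blowing up: Lemma~\ref{lemma:decompnogrowth} shows that $\core(T\ast H)$ is exactly $\{(p,[p^{-1}]) : p\in Q_T\}$, because the output stream of $T$ drives $H$ along the projected transitions of $T^{-1}$, so the factor's state is slaved to $T$'s and the product has at most $|T|$ states before any minimisation happens. One can instead run the mirror argument (apply the paper's construction to $T^{-1}$ and invert), and then the factors do come from the folding tower of $\Gamma$ itself and are pre-composed, as you propose --- but in that version the pairs one can merge are those whose \emph{inverse} states have equal transitions (amalgamable vertices of $G_A$), not the ``deep'' equal-transition pairs your heuristic targets. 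Whichever way your $\alpha^{-1}T$ is read, one half of your plan belongs to the other side's mechanism, and nothing in the proposal reconciles them.

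The second missing piece is the existence of the cancelling automorphism. The permutation that equalises the behaviour of the chosen pair of states is in general not realised by any automorphism of $\Gamma$ or of $B$ themselves; one must descend to a suitable level $j$ --- located in the paper via a relation $\mathscr{R}$ on the states reachable from the pair, together with its transitive closure --- and prove that the edges to be permuted have become \emph{parallel} in $B_j$, so that the required permutation extends to a vertex-fixing automorphism of that quotient. This is Lemma~\ref{Lemma:decompfindingautomorphism}, and it is genuine work (a cycle argument showing that letters lying in a common cycle of the discrepancy permutation label edges with a common target); appealing to ``strong synchronization rigidly pins down how states merge'' does not substitute for it. So: right strategy and correct bound-accounting, but the two lemmas that make each step of the induction go through (Lemma~\ref{lemma:decompnogrowth} and Lemma~\ref{Lemma:decompfindingautomorphism}, combined in Proposition~\ref{prop:decomposition}) are missing, and as sketched their hypotheses do not mesh.
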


This result is an improvement on a similar result in \cite{BoyleFranksKitchens}. There, in order to decompose an element $T$ of $\aut{\xnn, \shift{n}}$ as a product of torsion elements, one first needs to construct, in the best case, a graph with vertex size of the order of $n^{|T|}$, and it is unclear at the end how many torsion elements one ends up with in the decomposition. Our decomposition on the other hand begins with the transducer $T$ and at each step $i$, produces a torsion factor $H_i$ of $T$ with strictly fewer states than $T$.  

We also  give new combinatorial arguments for the following two results (see Section \ref{sec:FiniteSubgroups}).

\begin{itemize}
	\item   Any finite subgroup of $\aut{\xnn, \shift{n}}$ is isomorphic to a  subgroup of automorphisms of a folded de Bruijn graph. For any such strongly synchronizing automaton the  group of label ignoring automorphisms  embeds as a subgroup of $\aut{\xnz,\shift{n}}$. For the full one-sided shift, the  directed graphs arising from \emph{state splitting} as described in \cite{BoyleFranksKitchens} and \cite{JAshley} are actually  unlabeled directed graphs of strongly synchronizing automata when the directions of the arrows  are reversed. Thus, this embedding result is implicit in \cite{BoyleFranksKitchens,JAshley}.   

	\item When $n=2$,  the unlabeled directed graph corresponding to a strongly synchronizing automaton over a $2$ letter alphabet either has trivial automorphism group or its automorphism group is isomorphic to the cyclic group of order $2$. This gives a new proof  of a classic result of Hedlund \cite{Hedlund69} that $\aut{X_{2}^{\N}, \shift{2}} \cong C_2$. (Note that in \cite{BoyleFranksKitchens} it is shown that when $n>2$ that $\aut{X_{n}^{\N}, \shift{n}}$ contains a non-abelian free group.)
\end{itemize}

Our next result is the promised embedding of $\aut{\xnn, \shift{n}}$ in $\out{G_{n,r}}$ (given in Section \ref{Section:transducers}).  Recall that the Higman--Thompson groups $G_{n,r}$, for $1 \le r < n$, are among the first examples of finitely presented infinite simple groups (when $n$ is even $G_{n,r}$ is simple, and otherwise its derived subgroup is simple, see \cite{Higmanfpsg}).  
\begin{theorem}
	Let $1\leq r<n$ be integers, then $\aut{\xnn, \shift{n}}$ embeds as a subgroup of $\out{G_{n,r}}$.
\end{theorem}

We briefly discuss the strategy of the proof.

 A synchronous transducer that satisfies the strong synchronizing condition induces in a natural way a shift commuting map on $\xnn$. The subgroup of $\out{G_{n,r}}$ consisting of synchronous transducers that induce automorphisms of $(\xnn, \shift{n})$ is what is denoted in the paper \cite{AutGnr} as $\hn{n}$. (A result of \cite{AutGnr} asserts that $\hn{n}$ does not depend on $r$.) The action of $\hn{n}$ on $\xnn$ yields an injective homomorphism  to the group $\aut{\xnn, \shift{n}}$. In order to show that this map is onto, we use the characterization by Hedlund of automorphisms of $(\xnn, \shift{n})$ as sliding block codes which require no past information; we show that a sliding block code with no past information can be simulated by a strongly synchronizing transducer. Thus, we show that $\aut{\xnn, \shift{n}}$ is isomorphic to the group $\hn{n}$ of bi-synchronizing synchronous transducers. It is in the framework of this group $\hn{n}$, that we prove the results stated above.

As mentioned above, in the discussion of the group $\aut{\xn^{\N},\shift{n}}$, there arises an interesting family of small categories of automata and foldings between them.  The automata in any such category are what we call strongly synchronizing automata, below, and are a finite set of natural quotients of some particular de Bruijn graph (\cite{deBruijn}).  The categories are organized in a two-parameter family, and our final result (given in Section \ref{sec:counting}) is to count the number of elements in any such category when one of the parameters is less than or equal to $2$, extending earlier  results from \cite{BleakCameronCount}. The \textit{Bell number} $B(a)$, the number of partitions of a set of size $a$, naturally occurs in the obtained formula. 


\begin{theorem}
	The number of foldings of the de Bruijn graph with word length $2$ over an
	alphabet of cardinality $n$ is
	\[\sum_\pi\prod_{i=1}^{|\pi|}R(|\pi|,|A_i|),\]
	where $\pi$ runs over partitions of the alphabet, $A_i$ is the $i$th part, and
	\[R(s,t)=\sum_\rho(-1)^{|\rho|-1}(|\rho|-1)!\prod_{i=1}^{|\rho|}B(|C_i|s),
	\]
	where $\rho$ runs over all partitions of $\{1,\ldots,t\}$, and $C_i$ is the
	$i$th part.
\end{theorem}

\section{The Curtis, Hedlund, Lyndon Theorem}\label{section:preliminary}
In this paper, unlike the paper of Hedlund~\cite{Hedlund69}, operators will be
on the right of their arguments; but sequences will be indexed from left to
right in the usual way.

We begin with some basic definitions and notation.

We denote by $X_n$ the $n$-element set $\{0,1,\ldots,n-1\}$. Then $X_n^*$
denotes the set of all finite strings (including the empty string
$\varepsilon$) consisting of elements of $X_n$. For an element $w\in X_n^*$,
we let $|w|$ denote the length of $w$ (so that $|\varepsilon|=0$). We further
define
\[X_n^+=X_n^*\setminus\{\varepsilon\},\quad
X_n^k=\{w\in X_n^*:|W|=k\},\quad X_n^{\le k}=\bigcup_{0\le i\le k}X_n^i.\]
We denote the concatenation of strings $x,y\in X_n^*$ by $xy$; in this notation
we do not distinguish between an element of $X_n$ and the corresponding
element of $X_n^1$.

For $x,x_1,x_2\in X_n^*$, if $x$ is the concatenation $x_1x_2$ of $x_1$ and
$x_2$, we write $x_2=x-x_1$.  One can think of the minus operator as ``subtracting off a prefix''.

A bi-infinite sequence is a map $x:\mathbb{Z}\to X_n$. We sometimes write this
sequence as $\dots x_{-1}x_0x_1x_2\dots$, where $x_i=x(i)\in X_n$ (we use left actions for determining sequences). We denote
the set of such sequences by $\xnz$. In a similar way, we define a \emph{ (positive) singly-infinite sequence} as a map   $x: \N \to X_n$ (where, by convention,
$0\in \N$). We write such a sequence as  $x_0x_1x_2\ldots$ and denote the set of all such maps as $\xnn$. Finally, we also set $\xnN$ for the set of all maps $x: \xnN \to \xn$ (the (negative) singly infinite sequences).  Such a map will be written as a sequence $\ldots x_{-2}x_{-1}x_{0}$.

Normally, one thinks of a  full one-sided shift as $(\xnn, \sigma_n)$, where the shift operator $\sigma_n$ operates as $y=x\sigma_n$, where $y_i = x_{i+1}$ for all $i\in \N$. However, in our context it will be much more natural to think of the one-sided shift space as $(\xnN,\shift{n})$, where the shift operator $\shift{n}$ operates as $y= x\shift{n}$, where $y_{i} = x_{{i-1}}$ for all $i\in -\N$.  In Hedlund's characterization, the automorphisms of $(\xnN,\sigma_n)$ are sliding block codes that rely on no future information, instead of no past information.  This will ease many notational difficulties later on.

We can concatenate a string $y\in X_n^*$ with a singly infinite string
$x\in \xnN$, by adding $y$ as a suffix to $x$. We will sometimes subtract a finite string $y$ from a singly infinite string $x$ which has $y$ as a suffix by deleting the suffix $y$.

For a string $\nu \in \xns$ we write $\cone{\nu}$ for the set of all elements of $\xnN$ with $\nu$ as a suffix. Clearly $\cone{\varepsilon} = \xnN$.

Let $F(X_n,m)$ denote the set of functions from $X_n^m$ to $X_n$. Then, for
all $m,r>0$, and all $f\in F(X_n,m)$, we define a map $f_r:X_n^{m+r-1}\to X_n^{r}$
as follows. 
\begin{quote}
Let $x=x_{-m-r+2}\ldots x_{0}$. For $-r+1 \le i\le 0$, set
$y_i=(x_{i-m+1}x_{i-m+2}\ldots x_{i})f$. Then $xf_r=y$, where $y=y_{-r+1}\ldots y_{0}$.
\end{quote}

In other words, we take a ``window''
of length $m$ which slides along the sequence $x$, and at the $i$th step
we apply $f$ to the symbols visible in the window. (One may think of the map as acting on the rightmost letter in the viewing window, with $m-1$ digits of history.) This procedure can
be extended to define a map $f_\infty:\xnz\to\xnz$, by setting $xf_\infty=y$
where $y_i=(x_{i-m+1}\ldots x_{i})f$ for all $i\in\mathbb{Z}$; and similarly for
$\xnN$.

A function $f\in F(X_n,m)$ is called \emph{right permutive} if, for distinct
$x,y\in X_n$ and any fixed block $a\in X_n^{m-1}$, we have $(ax)f\ne(ay)f$.
Alternatively, the map from $X_n$ to itself given by $x\mapsto(ax)f$ is a permutation for all $a\in X_n^{m-1}$.  Analogously, a function $f \in  F(X_n, m)$ is called \emph{left permutive} if  the map from $X_n$ to itself given by $x\mapsto(xa)f$ is a permutation for all $a\in X_n^{m-1}$. 

We note that, if $f$ is not right permutive, then
the induced map $f_\infty$ from $\xnN$ to itself is not injective. The preceding sentence is false if we replace `right' with `left'. For example, take the map $g\in F(X_3,2)$ defined by $ax \mapsto x$ for all $x \in \{0,1,2\}$ and all $a \in \{0,1\}$; $20 \mapsto 1$, $21 \mapsto 0$ and $22 \mapsto 2$. Then $g$ is right permutive but not left permutive and $g_{\infty}$ is a bijection. 
It is not always the case that a right permutive map $f \in F(X_n, m)$ induces a bijective map $f_{\infty}: \xnn\to \xnn$. For example the map $f \in F(X_3, 2)$ defined by $a0 \mapsto 0$, $a1 \mapsto 2$, $a2 \mapsto 1$ for all $a \in \{0,1\}$; $20 \mapsto 1$, $21 \mapsto 0$, $22 \mapsto 2$ is a right permutive map such that $(\ldots 111 \ldots) f_{\infty} = (\ldots 222 \ldots) f_{\infty}$. We note that a right permutive map always induces a surjective map from $\xnN$ to itself. 

\begin{prerk}
Observe that, if $f\in F(X_n,m)$ and $k\ge1$, then the map $g\in F(X_n,m+k)$
given by $$(x_{-m-k+1}\ldots x_0)g=(x_{-m+1}\ldots x_{0})f,$$ satisfies
$g_\infty=f_\infty$.
\label{F(X_n,m)containedinF(X_n,m+1)}
\end{prerk}

\medskip

The sets $\xnz$, $\xnn$ and $\xnN$ are topological spaces, equipped with the Tychonoff
product topology derived from the discrete topology on $X_n$. Each is
homeomorphic to Cantor space. The set $\{ \cone{\nu} \mid \nu \in \xns\}$ is a basis of clopen sets for the topology on $\xnN$.

In this paper the \emph{shift map} $\shift{n}$ is the map which sends a sequence $x$ in
$\xnz$ or $\xnn$ to the sequence $y$ given by $y(i)=x(i-1)$ for all
$i$ in $\mathbb{Z}$ or $\mathbb{-N}$ respectively.

\medskip

The following result is due to Curtis, Hedlund and Lyndon~\cite[Theorem 3.1]{Hedlund69}: 

\begin{theorem}
Let $f\in F(X_n,m)$. Then $f_\infty$ is continuous on $\xnN$ and $\xnz$ and commutes with the
shift map on $\xnz$ and $\xnN$.
\label{t:hed1}
\end{theorem}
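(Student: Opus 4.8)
The plan is to verify the two assertions separately, treating $\xnz$ and $\xnN$ in parallel since the coordinate formula defining $f_\infty$ is identical on both spaces (only the index set changes). Throughout I keep the paper's right-operator convention, so ``commutes with the shift'' means $x\shift{n}f_\infty = xf_\infty\shift{n}$ for every point $x$.

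For continuity, I would use the fact that both spaces carry the product topology, so a map into such a space is continuous precisely when each of its coordinate functions is continuous. Writing $\pi_i$ for the projection onto the $i$th coordinate, the definition of $f_\infty$ gives $\pi_i(xf_\infty) = (x_{i-m+1}\ldots x_i)f$, which factors as $f\circ(\pi_{i-m+1},\ldots,\pi_i)$. The tuple map $(\pi_{i-m+1},\ldots,\pi_i)$ into $X_n^m$ is continuous because each projection is, and $f\colon X_n^m\to X_n$ is automatically continuous since $X_n^m$ is discrete. Hence every coordinate of $f_\infty$ is continuous, and therefore so is $f_\infty$. Equivalently, one may argue directly with the clopen cylinder basis $\{\cone{\nu}\mid \nu\in\xns\}$: the preimage of a basic cylinder is determined by only finitely many input coordinates and is thus open.

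For commutation, I would simply compute both composites coordinatewise. Fix $x$ and set $x\shift{n}=w$, so that $w_i=x_{i-1}$. Then $\pi_i(wf_\infty)=(w_{i-m+1}\ldots w_i)f=(x_{i-m}\ldots x_{i-1})f$. On the other side, with $y=xf_\infty$ we have $y_i=(x_{i-m+1}\ldots x_i)f$, and applying the shift gives $\pi_i(y\shift{n})=y_{i-1}=(x_{i-m}\ldots x_{i-1})f$. The two expressions agree in every coordinate $i$, so $x\shift{n}f_\infty=xf_\infty\shift{n}$, as required; the argument is insensitive to whether $i$ ranges over $\Z$ or $-\N$.

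The computations are routine; the only point demanding genuine care is the bookkeeping on $\xnN$. There one must check that the formula $y_i=(x_{i-m+1}\ldots x_i)f$ actually defines a point of $\xnN$, i.e.\ that computing the $i$th output symbol for $i\le 0$ only ever references input symbols $x_{i-m+1},\ldots,x_i$ whose indices are again $\le 0$. This is exactly the ``no future information'' feature of the sliding window, and it is what makes $f_\infty$ well-defined on the one-sided space in the first place; once this is noted, the continuity and commutation arguments transfer verbatim from the bi-infinite case.
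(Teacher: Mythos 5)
Your proof is correct. Note, however, that the paper does not actually prove Theorem~\ref{t:hed1}: it is quoted as a known result of Curtis, Hedlund and Lyndon (Theorem~3.1 of Hedlund's paper), so there is no in-paper argument to compare yours against. Your argument is the standard one and is complete: continuity follows from the universal property of the product topology (or, equivalently, from the observation that the $f_\infty$-preimage of a cylinder $\cone{\nu}$ is a finite union of cylinders over words of length $|\nu|+m-1$), and commutation with $\shift{n}$ is a coordinatewise computation in which both composites give $(x_{i-m}\ldots x_{i-1})f$ in coordinate $i$, for $i$ ranging over $\Z$ or $-\N$ as appropriate. You were also right to single out the one genuinely one-sided issue, namely that on $\xnN$ the window $x_{i-m+1},\ldots,x_i$ only refers to indices $\le 0$, so $f_\infty$ is well defined there; this ``no future information'' feature is exactly what the paper exploits later when simulating sliding block codes by strongly synchronizing transducers. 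Finally, your coordinatewise commutation computation is the same technique the paper does spell out when it proves the analogous statement for synchronizing transducers (Proposition~\ref{prop:pntildeisinendo}), where the state forced by $x_{i-k}\ldots x_{i-1}$ plays the role of your sliding window, so your proposal is consistent in both substance and style with the arguments the paper chooses to include.
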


A continuous function from $\xnz$ to itself which commutes with the shift map
is called an \emph{endomorphism} of the shift dynamical system
$(\xnz,\shift{n})$. If the function is invertible, since $\xnz$ is compact and Hausdorff, its inverse is continuous:
it is an \emph{automorphism} of the shift system. The sets of endomorphisms and
of automorphisms are denoted by $\End(\xnz,\shift{n})$ and $\Aut(\xnz,\shift{n})$
respectively. Under composition, the first is a monoid, and the second a group.

Analogously, a continuous
function from $\xnN$ to itself which commutes with the shift map on this
space is an \emph{endomorphism of the one-sided shift} $(\xnN,\shift{n})$; if it is invertible, it is an \emph{automorphism} of this shift system. The sets of
such maps are denoted by $\End(\xnN,\shift{n})$ and $\Aut(\xnN,\shift{n})$; again
the first is a monoid and the second a group.

Note that $\shift{n}\in\Aut(\xnz,\shift{n})$, whereas
$\shift{n}\in\End(\xnN,\shift{n})\setminus\Aut(\xnN,\shift{n})$. More generally, the inclusions $$\End(\xnN, \shift{n}) \subsetneq \End(\xnz, \shift{n}) \mathrm{ \ and \ } \aut{\xnN, \shift{n}} \subsetneq\aut{\xnz, \shift{n}}$$ are valid.

Define
\begin{eqnarray*}
F_\infty(X_n) &\seteq& \bigcup_{m\ge0}\{f_\infty:f\in F(X_n,m)\},\\
RF_\infty(X_n) &\seteq& \bigcup_{m\ge0}\{f_\infty:f\in F(X_n,m), f
\mbox{ is right permutive}\}.
\end{eqnarray*}

Theorem~\ref{t:hed1} shows that $F_\infty(\xn)\subseteq\End(\xnz,\sigma_n)$. In fact $F_{\infty}(\xn)$ and $RF_{\infty}(\xn)$ are submonoids of $\End(\xnz,\sigma_n)$. For given natural numbers $l$ and $m$, $f \in F(X_{n},l)$ and $g \in F(X_{n}, m)$, the function $h \in F(X_{n}, l+m-1)$ defined by $(a_{-l-m+2}\ldots a_{-1}a_{0})h = ((a_{-l-m+2}\ldots a_{-1}a_{0})f_{l+m-1})g$ satisfies $h_{\infty} = f_{\infty} \circ g_{\infty}$. If $f$ and $g$ are both right permutive, then so also is $h$.
Note that $\shift{n}\in F_\infty(X_n)$ since the function $f\in X_n^2$ defined by
\[(x_{-1}x_0)f=x_{-1},\]
satisfies $f_\infty=\shift{n}$. However $\shift{n}^{-1}$ is not an element of $F_{\infty}(X_{n})$. Now, \cite[Theorem 3.4]{Hedlund69} shows:

\begin{theorem}
$\End(\xnz,\shift{n}) = \{ \shift{n}^{i} \phi \mid i \in \mathbb{Z}, \phi \in F_{\infty}(X_{n}) \}$.
\label{t:hed2}
\end{theorem}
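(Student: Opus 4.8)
The plan is to prove the two inclusions separately, with essentially all of the work going into the containment $\End(\xnz,\shift{n}) \subseteq \{\shift{n}^i\phi : i\in\Z,\ \phi\in F_\infty(X_n)\}$. The reverse containment I would dispatch immediately: each $\shift{n}^i$ with $i\in\Z$ belongs to $\End(\xnz,\shift{n})$ (indeed to $\Aut(\xnz,\shift{n})$), each $\phi\in F_\infty(X_n)$ belongs to $\End(\xnz,\shift{n})$ by Theorem~\ref{t:hed1}, and $\End(\xnz,\shift{n})$ is closed under composition, so every composite $\shift{n}^i\phi$ is again an endomorphism.

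For the forward containment, fix $\tau\in\End(\xnz,\shift{n})$. The first step is to realise $\tau$ as a sliding block code of finite width: I would show there are $a,b\ge 0$ and a local rule $F\colon X_n^{a+b+1}\to X_n$ with $(x\tau)_i = F(x_{i-a}\cdots x_{i+b})$ for every $x$ and every $i$. To get this, consider the map $x\mapsto (x\tau)_0$ into the discrete set $X_n$, which is continuous; since $\xnz$ is compact and the cylinder sets form a basis, a covering argument yields a single $N$ for which $(x\tau)_0$ depends only on $x_{-N},\dots,x_N$, and this defines $F$ (so one may take $a=b=N$). Commutation with $\shift{n}$ then spreads the rule to all coordinates: with $y=x\shift{n}^{-i}$ one computes $(x\tau)_i = (x\tau\shift{n}^{-i})_0 = (y\tau)_0 = F(x_{i-a}\cdots x_{i+b})$.

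The second step is to absorb the anticipation $b$ into a power of the shift. I would set $g\in F(X_n,a+b+1)$ equal to the same function $F$; reading off the definition of $g_\infty$ gives $(xg_\infty)_i = F(x_{i-a-b}\cdots x_i) = (x\tau)_{i-b} = (x\tau\shift{n}^b)_i$, so that $g_\infty = \tau\shift{n}^b$ as maps on $\xnz$. As $g_\infty$ commutes with $\shift{n}$ by Theorem~\ref{t:hed1} and $\shift{n}$ is invertible on $\xnz$, I can rearrange to $\tau = \shift{n}^{-b}g_\infty$, which is of the desired shape with $i=-b$ and $\phi = g_\infty\in F_\infty(X_n)$.

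The hard part is the uniformisation inside the first step: upgrading the pointwise continuity of $x\mapsto(x\tau)_0$ to a single finite window $[-N,N]$ that works for all $x$ simultaneously. This is precisely where compactness of $\xnz$ is used, and it is the only genuinely nontrivial ingredient. After that, the shift-absorption is a bookkeeping computation; the one thing to watch is the convention $(x\shift{n}^k)_i = x_{i-k}$, which forces the correcting factor to be $\shift{n}^{-b}$ rather than $\shift{n}^{b}$ and ensures that $\phi$ remains a genuine no-future block map in $F_\infty(X_n)$.
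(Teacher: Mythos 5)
Your proof is correct: the compactness argument producing a single uniform window for $x\mapsto(x\tau)_0$, followed by absorbing the anticipation $b$ into the factor $\shift{n}^{-b}$, is exactly the classical Curtis--Hedlund--Lyndon argument, and your sign bookkeeping under this paper's convention $(x\shift{n})_i=x_{i-1}$ comes out right ($g_\infty=\tau\shift{n}^{b}$, hence $\tau=\shift{n}^{-b}g_\infty$). The paper does not prove this statement itself---it simply cites it as Theorem~3.4 of Hedlund---and the proof in that reference proceeds in essentially the same way as yours, so your write-up matches the intended argument.
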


The following result is a corollary:
\begin{theorem}
$RF_\infty(\xn)$ is a submonoid of $\End(\xnz, \shift{n})$ and $\Aut(\xnN,\shift{n})$ is the largest inverse closed subset of $RF_\infty(X_n)$.
\end{theorem}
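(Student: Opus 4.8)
My plan is to treat the two assertions in turn; the first is bookkeeping on what has already been assembled, while the second carries the content. Throughout I realise the one-sided shift as $(\xnN,\shift{n})$, the model fixed in this section, since each map $f_\infty$ reads only coordinates at or to the left of the one it writes and so genuinely acts on $\xnN$.

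For the submonoid claim: by Theorem~\ref{t:hed1} every $f_\infty$ with $f\in F(X_n,m)$ is continuous and commutes with $\shift{n}$, so $RF_\infty(X_n)\subseteq F_\infty(X_n)\subseteq\End(\xnz,\shift{n})$. The discussion preceding Theorem~\ref{t:hed2} shows that the composite of the rules underlying two right permutive maps is again right permutive and realises $f_\infty\circ g_\infty$, so $RF_\infty(X_n)$ is closed under composition; and the rule $x_0\mapsto x_0$ in $F(X_n,1)$ is (vacuously) right permutive with $f_\infty=\id$. Hence $RF_\infty(X_n)$ is a submonoid of $\End(\xnz,\shift{n})$.

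For the second assertion I would first note that the largest inverse-closed subset of any monoid is its group of units $U(RF_\infty(X_n))$: any inverse-closed subset consists of units, while the units themselves form an inverse-closed subset. The device linking the two spaces is restriction: since each $f_\infty$ reads only coordinates at or to the left of the coordinate it writes, it restricts to a well-defined, continuous, $\shift{n}$-commuting map $\xnN\to\xnN$, and restriction respects both composition and the identity. So the goal becomes to show that restriction to $\xnN$ identifies $U(RF_\infty(X_n))$ with $\aut{\xnN,\shift{n}}$. One inclusion is direct: if $f_\infty\in U(RF_\infty(X_n))$ has inverse $g_\infty\in RF_\infty(X_n)$, restricting the relations $f_\infty g_\infty=g_\infty f_\infty=\id$ to $\xnN$ presents $f_\infty|_{\xnN}$ as a homeomorphism with inverse $g_\infty|_{\xnN}$ commuting with $\shift{n}$, hence an element of $\aut{\xnN,\shift{n}}$. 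For the converse, take $h\in\aut{\xnN,\shift{n}}$; by Hedlund's representation of one-sided shift automorphisms as sliding block codes requiring no future information (\cite{Hedlund69}), both $h$ and $h^{-1}$ arise as restrictions $f_\infty|_{\xnN}$ and $g_\infty|_{\xnN}$ of block maps. Since $h$ is injective, $f$ must be right permutive --- a rule that is not right permutive yields two points of $\xnN$ differing only in their final coordinate but with equal image --- and likewise $g$, so $f_\infty,g_\infty\in RF_\infty(X_n)$.

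The remaining point, which I expect to be the crux, is to upgrade the one-sided identities $f_\infty g_\infty|_{\xnN}=\id$ and $g_\infty f_\infty|_{\xnN}=\id$ to identities on all of $\xnz$, which is exactly what is needed to place $f_\infty$ in $U(RF_\infty(X_n))$. Here I would use that $f_\infty g_\infty$ is again a block map $k_\infty$, together with the principle that such a map is determined by its local rule and that every word of the relevant length occurs as the terminal window of some point of $\xnN$; agreement with $\id$ on $\xnN$ then forces the rule of $k_\infty$ to return the last symbol of its window, so $k_\infty=\id$ on all of $\xnz$. The same local-rule principle shows that restriction to $\xnN$ is injective on block maps. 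The two inclusions, together with this injectivity, yield the asserted identification $U(RF_\infty(X_n))\cong\aut{\xnN,\shift{n}}$, completing the proof.
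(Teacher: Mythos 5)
Your proof is correct. There is, however, no step-by-step argument in the paper to compare it against: the statement appears there as an unproved corollary of Theorem~\ref{t:hed2} and of the discussion surrounding it (the observation that $F_\infty(X_n)$ and $RF_\infty(X_n)$ are submonoids of $\End(\xnz,\shift{n})$ via the composition formula, and the remark that a non-right-permutive rule induces a non-injective map on one-sided sequences). Your argument is essentially the honest writing-out of that corollary, using exactly the ingredients the paper assembles: Theorem~\ref{t:hed1} for the containment in $\End(\xnz,\shift{n})$, the composition formula for closure under product, Hedlund's one-sided sliding-block-code characterisation to represent an automorphism $h$ and its inverse as restrictions of block maps, and the injectivity remark to force right permutivity. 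Where you add genuine value is the step you correctly identify as the crux, which the paper's terse ``corollary'' presentation glosses over entirely: the identities $f_\infty g_\infty=\id$ and $g_\infty f_\infty=\id$ are known a priori only on $\xnN$, and must be promoted to $\xnz$ before $f_\infty$ qualifies as a unit of $RF_\infty(X_n)$ viewed as a monoid of maps on $\xnz$. Your mechanism for this --- every word of the relevant length occurs as the terminal window of some point of $\xnN$, so agreement with the identity on $\xnN$ forces the local rule itself to be the identity rule, and the same principle shows restriction to $\xnN$ is injective on block maps --- is exactly what legitimises reading the theorem as the identification of $\aut{\xnN,\shift{n}}$ (equivalently $\aut{\xnn,\shift{n}}$, after reindexing) with the group of units, i.e.\ the largest inverse-closed subset, of $RF_\infty(X_n)$. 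Your preliminary reduction (largest inverse-closed subset $=$ group of units) is also correct and worth making explicit, since the paper uses the phrase without comment. The only quibble is cosmetic: right permutivity of the identity rule in $F(X_n,1)$ is not ``vacuous'' but simply trivial, the block $a$ being empty.
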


\section{Connections to transducers}\label{Section:transducers}
In this section we will give a definition and very brief history of the Higman--Thompson groups $G_{n,r}$, and an incomplete list of references to related research on these in the literature.  We then flesh out the connection between $\out{G_{n,r}}$ and $\Aut(\xnN,\shift{n})$, explaining that one can represent all elements of these groups by a certain class of finite strongly synchronizing transducers.  It follows from these characterizations that $\Aut(\xnN,\shift{n})$ embeds in a straightforward fashion in $\out{G_{n,r}}$.
\subsection{The Higman--Thompson groups $G_{n,r}$}
Richard Thompson's notes from 1965 \cite{ThompsonNotes} introduce three infinite finitely presented groups, now commonly known as $F\leq T\leq V$, and show that $T$ and $V$ are also simple. These are the first known examples of infinite finitely presented simple groups.  See \cite{CFP} for a standard survey on the Thompson groups.

The Higman--Thompson groups $G_{n,r}$ were introduced by Higman in \cite{Higmanfpsg}, where he generalizes Thompson's construction of the group $V$ from \cite{ThompsonNotes} ($V\cong G_{2,1}$).  Calculations in the Higman--Thompson groups do not play a role in the main body of this article, but for the curious reader, in Subsubsection \ref{sssec:gnr} we still provide an oft-used concrete realization of the groups $G_{n,r}$ as specific groups of homeomorphisms of Cantor spaces.

  In any case, Higman in \cite{Higmanfpsg} shows that the groups $G_{n,r}$ are simple when $n$ is even, and that they have a simple commutator subgroup $G_{n,r}'$ of index two when $n$ is odd.  Thus he obtains the first known infinite family of infinite, finitely presented simple groups (Higman also shows there are infinitely many isomorphism types amongst the groups $G_{n,r}$).  See \cite{Higmanfpsg,BrownGeometryFPSimple,BirgetCircuits,Pardo,Martinez-PerezMatucciNucinkis,DicksMartinezPerez,AutGnr,SzymikWahlHomologyHigThomp} for some research on these groups.  The paper \cite{AutGnr} provides the characterization of Out($G_{n,r}$) that is relevant to this article.
  
  It is expressed in \cite{BleakQuick} that R. Thompson's group $V$ represents a ``gentle'' realization of the alternating groups $A_k$ into an infinite context.  In Subsubsection \ref{sssec:gnr}  we generate a given group $G_{n,r}$ by transpositions of clopen subsets of Cantor space (which themselves can each be written as a product of $n$ ``smaller'' transpositions).  This type of generation is part of the justification for this view on the fundamental nature of the groups $G_{n,r}$ (or at least, for the group $V\cong G_{2,1}$).
\subsubsection{The Cantor spaces $\CCnr$}
Let $r$ and $n$ be given natural numbers with $n\geq 2$.
\newcommand{\nset}{\{0,1,\ldots,n-1\}}
\newcommand{\rd}{\dot{r}}
\newcommand{\CCn}{\mathfrak{C}_{n}}

Given such $r$ and $n$, determine two finite alphabets $\rd\seteq\{\dot{0},\dot{1},\ldots, \dot{r-1}\}$ and our standard $n$-letter alphabet $\nset$ and give each of these sets the discrete topology.  A standard characterization of $n$-ary Cantor space $\mathfrak{C}_n$ is as the space $\mathfrak{C}_n\seteq\nset^{\N}$ under the product topology, and we can then form another Cantor space $\CCnr$ as the product $\rd\times\CCn$.
\[
\CCnr:=\{c a_0 a_1 a_2\ldots \mid a_i \in \nset, c\in \rd\}. 
\]
That is, $\CCnr$ can be thought of as a disjoint union of $r$ copies of the standard infinite $n$-ary Cantor space $\CCn$.

The space $\CCnr$ has the topology generated by using \emph{cones} as basic open sets.  A cone is any subset of the form $\mathbf{x}\CCn$, where
$\mathbf{x}$ is a finite sequence of the form
$cx_0x_1\ldots x_j$ for $c\in \rd$ and each $x_i\in \nset$.  Thus, the cone at $\mathbf{x}$ is all of 
the elements of $\CCnr$ which have leading prefix equal to $\mathbf{x}$.

\newcommand{\Gnr}{G_{n,r}}
\subsubsection{The Higman--Thompson groups $G_{n,r}$}\label{sssec:gnr}
The group $\Gnr$ can be realized as a subgroup of $\homeo(\CCnr)$ as follows. It is the subgroup generated by \emph{prefix replacement swaps}: one specifies two incomparable finite prefixes (that is, neither is a prefix of the other), say ${\bf x}\seteq c_1a_0a_1\ldots a_j$ and ${\bf y}\seteq c_2b_0b_1\ldots b_k$ (for some $c_1,c_2\in \rd$ and with each $a_i$ and $b_i$ from the alphabet $\{0,1,2,\ldots,n-1\}$), and then interchanges the cones in the Cantor space $\CCnr$ determined by these prefixes, using an element we denote by $(\mathbf{x}\;\;\mathbf{y})$.  For example, for this particular swap we have:
\begin{align*}
c_1a_0a_1\ldots a_ja_{j+1}a_{j+2}\ldots \;(\mathbf{x}\;\;\mathbf{y})&=c_2 b_0b_1\ldots b_ka_{j+1}a_{j+2}\ldots\\
c_2b_0b_1\ldots b_kb_{k+1}b_{k+2}\ldots \;(\mathbf{x}\;\;\mathbf{y})&= c_1a_0a_1\ldots a_jb_{k+1}b_{k+2}\ldots\\
\vec{z} \;(\mathbf{x}\;\;\mathbf{y})&= \vec{z} \textrm{ where }\vec{z}\textrm{ does not have prefix }\mathbf{x} \textrm { or }\mathbf{y}.
\end{align*} 

Composition of these swaps results in homeomorphisms of $\CCnr$  that replace some finite decomposition $D$ of $\CCnr$ into pairwise disjoint cones by some other such decomposition $R$, given a bijection from the set $D$ of cones to the set $R$ of cones.  This works as follows: one replaces the maximal common prefix of the points in a cone appearing in $D$ by the maximal common prefix of the points in the corresponding cone from $R$.

\subsection{Automata and transducers} \label{subsection:transducers}

An \emph{automaton}, in our context, is a triple $A=(X_A,Q_A,\pi_A)$, where
\begin{enumerate}
\item $X_A$ is a finite set called the \emph{alphabet} of $A$ (we assume that
this has cardinality $n$, and identify it with $X_n$, for some $n$);
\item $Q_A$ is a finite set called the \emph{set of states} of $A$;
\item $\pi_A$ is a function $X_A\times Q_A\to Q_A$, called the \emph{transition
function}.
\end{enumerate}

The \emph{size} of an automaton $A$ is the cardinality of its state set.  We use the notation $|A|$ for the size of the $A$.

We regard an automaton $A$ as operating as follows. If it is in state $q$ and
reads symbol $a$ (which we suppose to be written on an input tape), it moves
into state $\pi_A(a,q)$ before reading the next symbol. As this suggests, we
can imagine that the automaton $A$ is in the middle of an input word, reads the next letter and  moves to the right, possibly changing state in the process.

We can extend the notation as follows. For $w\in X_n^m$, let $\pi_A(w,q)$ be
the final state of the automaton which reads the word $w$ from initial state $q$. Thus, if $w=x_0x_1\ldots x_{m-1}$, then
\[\pi_A(w,q)=\pi_A(x_{m-1},\pi_A(x_{m-2},\ldots,\pi_A(x_0,q)\ldots)).\]
By convention, we take $\pi_A(\varepsilon,q)=q$.

For a given state $q\in Q_A$, we call the automaton $A$ which starts in
state $q$ an \emph{initial automaton}, denoted by $A_q$, and say that it is
\emph{initialized} at $q$.

An automaton $A$ can be represented by a labeled directed graph, whose
vertex set is $Q_A$; there is a directed edge labeled by $a\in X_A$ from
$q$ to $r$ if $\pi_A(a,q)=r$.

A \emph{transducer} is a quadruple $T=(X_T,Q_T,\pi_T,\lambda_T)$, where
\begin{enumerate}
\item $(X_T,Q_T,\pi_T)$ is an automaton;
\item $\lambda_T:X_T\times Q_T\to X_T^*$ is the \emph{output function}.
\end{enumerate}
Such a transducer is an automaton which can write as well as read; after
reading symbol $a$ in state $q$, it writes the string $\lambda_T(a,q)$ on an
output tape, and makes a transition into state $\pi_T(a,q)$. We call the automaton $(X_{T}, Q_{T}, \pi_{T})$ the underlying automaton of $T$. Thus, the size of a transducer is the size of its underlying automaton. An \emph{initial
transducer} $T_q$ is simply a transducer which starts in state $q$.  Transducers which are \emph{synchronous} (i.e., which always write one letter whenever they read one letter) are also known as \emph{Mealy machines} (see \cite{GNSenglish}), although we generally will not use that language here.  Transducers which are not synchronous are described as \emph{asynchronous} when this aspect of the transducer is being highlighted.  In this paper, we will only work with synchronous transducers without an initial state, and, below, \textbf{we will simply call these transducers}.

In the same manner as for automata, we can extend the notation to allow transducers to act on finite strings: we let $\pi_T(w,q)$ and $\lambda_T(w,q)$ be, respectively, the final state and the concatenation of all the outputs obtained when a transducer $T$ reads a string $w$ from a state $q$.

A transducer $T$ can also be represented as an edge-labeled directed graph.
Again the vertex set is $Q_T$; now, if $\pi_T(a,q)=r$, we put an edge with
label $a|\lambda_T(a,q)$ from $q$ to $r$. In other words, the edge label describes both the input and the output associated with that edge. We call $a$ the \textit{input label} of the edge and $\lambda_{T}(a,q)$ the \textit{output label} of the edge.

For example, Figure~\ref{fig:shift2} describes a synchronous transducer over the alphabet
$X_2$.

\begin{figure}[htbp]
\begin{center}
 \begin{tikzpicture}[shorten >=0.5pt,node distance=3cm,on grid,auto]
 \tikzstyle{every state}=[fill=none,draw=black,text=black]
    \node[state] (q_0)   {$a_1$};
    \node[state] (q_1) [right=of q_0] {$a_2$};
     \path[->]
     (q_0) edge [loop left] node [swap] {$0|0$} ()
           edge [bend left]  node  {$1|0$} (q_1)
     (q_1) edge [loop right]  node [swap]  {$1|1$} ()
           edge [bend left]  node {$0|1$} (q_0);
 \end{tikzpicture}
 \end{center}
 \caption{A transducer over $X_2$ \label{fig:shift2}}
\end{figure}
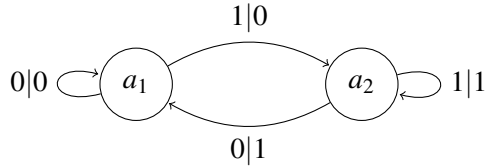



We can regard an automaton, or a transducer, as acting on an infinite string from $\xnn$ where  $X_n$ is the alphabet. This action is given by iterating
the action on a single symbol; so the output string is given by
\[\lambda_T(xw,q) = \lambda_T(x,q)\lambda_T(w,\pi_T(x,q)).\]

Thus $T_q$ induces a map $w\mapsto\lambda_T(w,q)$ from $\xnn$ to itself; it is
easy to see that this map is continuous. If it is a
homeomorphism, then we call the state $q$ a \emph{homeomorphism state}. We write $\im{q}$ for the image of the map induced by $T_{q}$.

Two states $q_1$ and $q_2$ are said to be \emph{$\omega$-equivalent} if the
transducers $T_{q_1}$ and $T_{q_2}$ induce the same continuous map. (This can
be checked in finite time, see~\cite{GNSenglish}.)  More generally, we say that two
initial transducers $T_q$ and $T'_{q'}$ are \emph{$\omega$-equivalent} if they
induce the same continuous map on $\xnn$. 

A transducer is said to be \emph{weakly minimal} if no two states are
$\omega$-equivalent. For a synchronous transducer $T$, two states $q_1$ and $q_2$ are $\omega$-equivalent if $\lambda_T(a, q_1) = \lambda_T(a,q_2)$ for any finite word $a \in X_n^{*}$. Moreover, if $q_1$ and $q_2$ are $\omega$-equivalent states of a synchronous transducer, then for any finite word $a \in X_{n}^+$, $\pi_{T}(a, q_1)$ and $\pi_{T}(a, q_2)$ are also $\omega$-equivalent states. 

There is a stronger notion of minimality which appears in \cite{GNSenglish} and applies also to asynchronous transducer, hence our use of the adjective \emph{weakly}.

 Two weakly minimal non-initial transducers $T$ and $U$  are said to be  \emph{$\omega$-equal} if there is a bijection $f: Q_{T} \to Q_{U}$, such that for any $q \in Q_{T}$, $T_{q}$ is $\omega$-equivalent to $U_{(q)f}$. Two weakly minimal initial transducers $T_{p}$ and $U_{q}$ are said to be $\omega$-equal if there is a bijection  $f: Q_{T} \to Q_{U}$, such that $(p)f = q$ and for any $t \in Q_{T}$, $T_{t}$ is $\omega$-equivalent to $U_{(t)f}$. We shall use the symbol `$=$' to represent $\omega$-equality of initial and non-initial transducers. Two non-initial transducers are said to be $\omega$-equivalent if they have $\omega$-equal minimal representatives.

In the class of synchronous transducers, the  $\omega$-equivalence class of  any transducer has a unique weakly minimal representative. In the general case, if one permits infinite outputs from finite inputs,
Grigorchuk \textit{et al.}~\cite{GNSenglish} prove that the $\omega$-equivalence
class of an initialized transducer $T_q$ has a unique minimal representative and give an algorithm for computing this representative. 

Throughout this article, as a matter of convenience, we shall not distinguish between $\omega$-equivalent transducers. Thus, for example, we introduce various groups as if the elements of those groups are transducers, whereas the elements of these groups are in fact  $\omega$-equivalence classes of transducers. 

Given two transducers $T=(X_n,Q_T,\pi_T,\lambda_T)$ and
$U=(X_n,Q_U,\pi_U,\lambda_U)$ with the same alphabet $X_n$, we define their
product $T*U$. The intuition is that the output for $T$ will become the input
for $U$. Thus we take the alphabet of $T*U$ to be $X_n$, the set of states
to be $Q_{T*U}=Q_T\times Q_U$, and define the transition and rewrite functions
by the rules
\begin{eqnarray*}
\pi_{T*U}(x,(p,q)) &=& (\pi_T(x,p),\pi_U(\lambda_T(x,p),q)),\\
\lambda_{T*U}(x,(p,q)) &=& \lambda_U(\lambda_T(x,p),q),
\end{eqnarray*}
for $x\in X_n$, $p\in Q_T$ and $q\in Q_U$. Here we use the earlier 
convention about extending $\lambda$ and $\pi$ to the case when the transducer
reads a finite string.  If $T$ and $U$ are initial with initial states $q$ and $p$ respectively then the state $(q,p)$ is considered the initial state of the product transducer $T*U$.

In automata theory a synchronous (not necessarily initial) transducer $T = (X_n, Q_{T}, \pi_{T}, \lambda_T)$ is \emph{invertible} if for any state $q$ of $T$, the map $\rho_q:=\lambda_{T}(\centerdot, q): X_{n} \to X_{n}$ is a bijection. In this case the inverse of $T$ is the transducer $T^{-1}$ with state set $Q_{T^{-1}}:= \{ q^{-1} \mid q \in Q_{T}\}$, transition function $\pi_{T^{-1}}: X_{n} \times Q_{T^{-1}} \to Q_{T^{-1}}$ defined by $(x,p^{-1}) \mapsto q^{-1}$ if and only if $\pi_{T}((x)\rho_{p}^{-1}, p) =q$, and output function  $\lambda_{T^{-1}}: X_{n} \times Q_{T^{-1}} \to X_{n}$ defined by  $(x,p) \mapsto (x)\rho_{p}^{-1}$. 

One can the interpret the previous paragraph as follows:  For the invertible synchronous transducer $T$, the inverse transducer $T^{-1}$ is the result of switching inputs and outputs on all transitions of $T$.  In particular, we can think of a synchronous transducer as an ordered pair of automata, each with the same structure as directed graphs.  Inversion then corresponds to swapping the ordering on this ordered pair, much as we do in constructing inverses for non-zero fractions by switching  the numerator and denominator in a non-zero fraction of integers. In the transducer $T$ depicted in Figure \ref{fig-inversionExamp} below, the input automaton corresponds to the directed graph with the input labels on the edges and the output automaton corresponds to the directed graph with the output labels on the edges.   Henceforth, we will refer to the input automaton as the \emph{domain automaton} and the output automaton as the \emph{range automaton}.

\begin{figure}[H]
	\begin{center}
		\begin{tikzpicture}[shorten >= .5pt,node distance=2cm,on grid,auto] 
		\node[state] (q0)   {$q_2$};
		\node[state, xshift= -1.5cm, yshift=-2cm] (q1)   {$q_1$};
		\node[state, xshift=1.5cm, yshift=-2cm ] (q2) {$q_0$};
		\node[state, xshift=7cm] (p0) {$q_2^{-1}$};
		\node[state, xshift= 5.5cm, yshift=-2cm] (p1)   {$q_1^{-1}$};
		\node[state, xshift=8.5cm, yshift=-2cm ] (p2) {$q_0^{-1}$};
		\node[xshift=0cm, yshift=-3.5cm]{$T$};
		\node[xshift=7cm, yshift=-3.5cm]{$T^{-1}$};
		\path[->] 
		
				(q0) edge[out=180, in=90] node[swap]{$1|0$} (q1)
		edge[out=75, in=105, loop] node[swap] {$0|2$} () 
		edge[out=360, in=90] node{$2|1$} (q2)
		
		(q1) edge[in=190, out=80]  node[swap] {$0|2$} (q0)
		edge[out=140, in=170,loop] node[swap] {$1|1$} ()
		edge[in=190, out=350] node[swap] {$2|0$} (q2) 
		
		(q2) edge[in=360, out=180] node[swap] {$0|1$} (q1)
		edge[out=40, in=10,loop] node {$2|0$} ()
		edge[in=350, out=100] node {$1|2$}   (q0)

		(p0) edge[out=180, in=90] node[swap]{$0|1$} (p1)
		edge[out=75, in=105, loop] node[swap] {$2|0$} () 
		edge[out=360, in=90] node{$1|2$} (p2)
		
		(p1) edge[in=190, out=80]  node[swap] {$2|0$} (p0)
		edge[out=140, in=170,loop] node[swap] {$1|1$} ()
		edge[in=190, out=350] node[swap] {$0|2$} (p2) 
		
		(p2) edge[in=360, out=180] node[swap] {$1|0$} (p1)
		edge[out=40, in=10,loop] node {$0|2$} ()
		edge[in=350, out=100] node {$2|1$}   (p0);
		\end{tikzpicture}
	\end{center}
	\caption{Inverting a synchronous transducer $T$.\label{fig-inversionExamp}}
\end{figure}
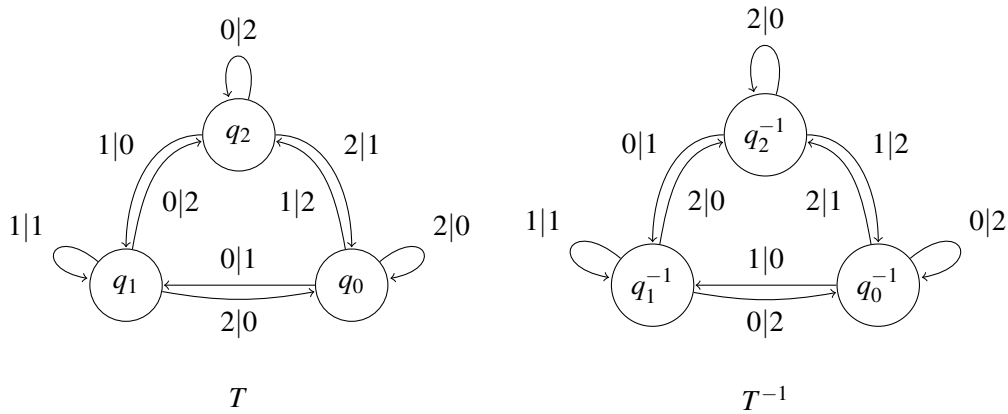

In this article, we will come across synchronous transducers which are not invertible in the automata theoretic sense but which nevertheless induce self-homeomorphisms of the spaces $\xnz$ and  $\xnN$. Consequently it will important to distinguish between an automaton theoretic inverse and the inverse of the induced action on the various spaces we consider.

\subsection{Synchronizing automata and bisynchronizing transducers}

Given a natural number $k$, we say that an automaton $A$ with alphabet $X_n$
is \emph{synchronizing at level $k$} if there is a map
$\mathfrak{s}_{k}:X_n^k\mapsto Q_A$ such that, for all $q$ and any word
$w\in X_n^k$, we have $\pi_A(w,q)=\mathfrak{s}_{k}(w)$. In other words, $A$ is synchronizing at level $k$ if, after reading a word $w$ of length $k$ from a state $q$, the final state depends only on
$w$ and not on $q$. (Again we use the extension of $\pi_A$ to
allow the reading of an input string rather than a single symbol.) We
call $\mathfrak{s}_{k}(w)$ the state of $A$ \emph{forced} by $w$; the map $\mathfrak{s}_{k}$ is called the \emph{synchronizing map at level $k$}. An automaton $A$ is called \emph{strongly synchronizing} if it is synchronizing at level $k$ for some $k$.

We remark here that the notion of synchronization occurs in automata theory
in considerations around the \emph{\v{C}ern\'y conjecture}, in a weaker sense.
A word $w$ is said to be a \emph{reset word} for $A$ if $\pi_A(w,q)$ is
independent of $q$; an automaton is called \emph{synchronizing} if it has
a reset word~\cite{Volkov2008,ACS}. Our definition of ``synchonizing at level $k$''/"strongly synchronizing"
requires every word of length $k$ to be a reset word for the automaton.

If the automaton $A$ is synchronizing at level $k$, we define the
\emph{core} of $A$ to be the set of states forming the image of the map
$\mathfrak{s}$. It is an easy observation that, if $A$ is synchronizing at
level $k$, then its core is an automaton in its own right, and is also 
synchronizing at level $k$. We denote this automaton by $\core(A)$. We say that an
automaton or transducer is \emph{core} if it is equal to its core. Moreover,
if $T$ is a transducer which (regarded as an
automaton) is synchronizing at level $k$, then the core of $T$ (similarly denoted 
 $\core(T)$) induces a continuous map $f_T:\xnz\to\xnz$.

Clearly, if $A$ is synchronizing at level $k$, then it is synchronizing at
level~$l$ for all $l\ge k$; but the map $f_T$ is independent of the level
chosen to define it.

Let $T_q$ be an initial transducer which is invertible with inverse $T_q^{-1}$. If $T_q$ is synchronizing at level $k$, and $T_q^{-1}$ is synchronizing at level $l$, 
we say that $T_q$ is \emph{bisynchronizing} at level $(k,l)$. If $T_q$ is
invertible and is synchronizing at level~$k$ but not bisynchronizing, we say
that it is \emph{one-way synchronizing} at level~$k$.

For a non-initial invertible transducer  $T$ we also say $T$ is bi-synchronizing (at level $(k,l)$) if both $T$ and its inverse $T^{-1}$ are synchronizing at levels $k$ and $l$ respectively.

\begin{ntn}
	Let $T$ be a transducer which is synchronizing at level $k$ and let $l \ge k$ be any natural number. Then for any word $w \in X_{n}^{l}$, we write $q_{w}$ for the state $\mathfrak{s}_{l}(w)$, where $\mathfrak{s}_{l}: X_{n}^{l} \to Q_{T}$ is the synchronizing map at level $l$.
\end{ntn}

The following result was proved in Bleak \textit{et al.}~\cite{AutGnr} .

\begin{prop}
Let $T$, $U$ be  transducers which (as automata) are synchronizing at levels $j$, $k$ respectively, Then $T*U$ is synchronizing at level at most $j+k$.
\label{p:synchlengthsadd}
\end{prop}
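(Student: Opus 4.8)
The plan is to feed an arbitrary word $w$ of length $j+k$ into $T*U$ starting from an arbitrary state $(p_0,q_0)\in Q_T\times Q_U$, and to check that the resulting final state is a function of $w$ alone. Unwinding the definition of the product transducer, reading $w$ from $(p_0,q_0)$ drives the first coordinate by $T$ reading $w$ directly, and the second coordinate by $U$ reading the output string $\lambda_T(w,p_0)$ that $T$ produces; that is, $\pi_{T*U}(w,(p_0,q_0)) = \bigl(\pi_T(w,p_0),\,\pi_U(\lambda_T(w,p_0),q_0)\bigr)$, which follows by induction from the single-step rules together with the output-concatenation convention. The essential structural point I will use throughout is that, since our transducers are synchronous, $\lambda_T(w,p_0)$ has the same length $j+k$ as $w$.

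First I would decompose $w=w_1w_2$ with $|w_1|=j$ and $|w_2|=k$ and handle the two coordinates separately. For the first coordinate, $T$ is synchronizing at level $j$, so the intermediate state $\pi_T(w_1,p_0)=\mathfrak{s}_j(w_1)$ (the state of $T$ forced by $w_1$) is independent of $p_0$; hence $\pi_T(w,p_0)=\pi_T(w_2,\mathfrak{s}_j(w_1))$ depends only on $w$.

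For the second coordinate, write $\lambda_T(w,p_0)=\lambda_T(w_1,p_0)\,\lambda_T(w_2,\pi_T(w_1,p_0))$. Its length-$k$ suffix is $\lambda_T(w_2,\pi_T(w_1,p_0))=\lambda_T(w_2,\mathfrak{s}_j(w_1))$, again using that the intermediate $T$-state is forced by $w_1$; so this suffix is determined by $w$ with no dependence on $p_0$. Since $U$ is synchronizing at level $k$ and the $U$-input $\lambda_T(w,p_0)$ has length $j+k\ge k$, the state $\pi_U(\lambda_T(w,p_0),q_0)$ is forced by the length-$k$ suffix of the $U$-input and is therefore independent of $q_0$. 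Combining the two coordinates, $\pi_{T*U}(w,(p_0,q_0))$ is a function of $w$ alone, which is exactly the statement that $T*U$ is synchronizing at level $j+k$, hence at level at most $j+k$. One may even write the synchronizing map explicitly as $\mathfrak{s}_{j+k}^{T*U}(w)=\bigl(\pi_T(w_2,\mathfrak{s}_j(w_1)),\,\mathfrak{s}_k^U(\lambda_T(w_2,\mathfrak{s}_j(w_1)))\bigr)$, where $\mathfrak{s}_j$, $\mathfrak{s}_k^U$ denote the synchronizing maps of $T$ and $U$ respectively.

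The main (and essentially only) obstacle is the bookkeeping for the string that $T$ feeds into $U$: one must confirm both that it is long enough—at least $k$ letters, which is precisely where synchronicity of $T$ enters—and that its length-$k$ suffix carries no memory of the start state $p_0$. Both facts reduce to applying the level-$j$ synchronizing property of $T$ to the prefix $w_1$, but they deserve explicit mention, since for an asynchronous $T$ the output could in principle be shorter than $k$ and the argument would break down.
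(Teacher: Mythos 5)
Your proof is correct. Note that the paper itself gives no argument for Proposition \ref{p:synchlengthsadd} --- it simply cites Bleak \emph{et al.}~\cite{AutGnr} --- so there is nothing in-text to compare against; your write-up supplies the natural self-contained proof: the product formula $\pi_{T*U}(w,(p_0,q_0)) = \bigl(\pi_T(w,p_0),\,\pi_U(\lambda_T(w,p_0),q_0)\bigr)$, the split $w=w_1w_2$ with $|w_1|=j$, $|w_2|=k$, and the two applications of the synchronizing hypotheses. You are also right to flag that synchronicity of $T$ is the load-bearing hypothesis for the second coordinate (it guarantees the $U$-input has length $j+k$ and that its length-$k$ suffix is $\lambda_T(w_2,\mathfrak{s}_j(w_1))$, free of $p_0$), which is exactly where the statement would fail for asynchronous transducers.
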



In what follows we give a formula specifying how strongly synchronizing transducers act by continuous functions on $\xnz$. The formula induces a natural action on $\xnN$ which immediately commutes with the shift. We recall that in our context the shift map $\shift{n}$ is the map which sends a sequence $x \in \xnN \sqcup \xnz$ to the sequence $y \in \xnN \sqcup \xnz$ given by $y_{i} =  x_{i-1}$ for all valid $i \in -\N \sqcup  \Z$. This represents a deviation from the way the shift map conventionally operates, however, in this point of view, as we will become clear, synchronizing transducers can locally process inputs in a manner consistent with the definition given in Subsection~\ref{subsection:transducers}.  The formula is as follows:

 Let $T$ be a transducer which is core, and is synchronizing at level $k$. The map $f_{T}: \xnz \to \xnz$ maps an element $x \in \xnz$ to the sequence $y$ defined by $y_{i} = \lambda_T(x_i, q_{x_{i-k}x_{i-k+1}\ldots x_{i-1}})$. We  also write $f_{T}$ for the continuous map from $\xnN$ to itself defined by  $y_{i} = \lambda_T(x_i, q_{x_{i-k}x_{i-k+1}\ldots x_{i-1}})$ for all $i \in  -\N$. We note that the induced map on $\xnN$ is simply the restriction of the map on $\xnz$ to the subsequence indexed by the negative integers. 

We note that given an element $x \in \xnz$  such that $(x)f_{T} = y$, then $y_0 y_1 \ldots =  (x_0x_1 \ldots) T_{q_{_{x_{-k}x_{-k+1}\ldots x_{-1}}}}$. This is what was meant by the transducer $T$ acts locally in a manner consistent with the definitions of Subsection~\ref{subsection:transducers}.

Now strongly synchronizing transducers may induce endomorphisms of the shift:

\begin{prop}\label{prop:pntildeisinendo}
Let $T$ be a minimal transducer which is  synchronizing at 
level $k$ and which is core. Then $f_T\in\End(\xnz,\shift{n})$ and $f_T \in \End(\xnN, \shift{n})$.
\end{prop}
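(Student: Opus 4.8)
The plan is to show that the map $f_T$ induced by a core transducer $T$ synchronizing at level $k$ is a continuous shift-commuting map on both $\xnz$ and $\xnN$. The strategy is to recognize $f_T$ as arising from a sliding block code in the sense of Section~\ref{section:preliminary}, and then invoke Theorem~\ref{t:hed1} together with the explicit formula for $f_T$ supplied just before the statement.

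First I would make the connection to $F_\infty(X_n)$ explicit. By the defining formula, for $x \in \xnz$ we have $(x)f_T = y$ where $y_i = \lambda_T(x_i, q_{x_{i-k}\ldots x_{i-1}})$. Since $T$ is synchronous, each output $\lambda_T(x_i, q_{x_{i-k}\ldots x_{i-1}})$ is a single letter of $X_n$, and it depends only on the window $x_{i-k} x_{i-k+1}\ldots x_i$ of length $k+1$. Thus I would define $f \in F(X_n, k+1)$ by setting $(x_{i-k}\ldots x_i)f = \lambda_T(x_i, q_{x_{i-k}\ldots x_{i-1}})$; here I use that $q_{x_{i-k}\ldots x_{i-1}} = \mathfrak{s}_k(x_{i-k}\ldots x_{i-1})$ is the state forced by the preceding $k$ letters, which is well-defined precisely because $T$ is synchronizing at level $k$ and core (so the synchronizing map lands in $Q_T$). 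With this $f$, the formula defining $f_T$ on $\xnz$ is literally the formula $xf_\infty = y$ with $y_i = (x_{i-k}\ldots x_i)f$, so $f_T = f_\infty$.

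Having identified $f_T = f_\infty$ with $f \in F(X_n, k+1)$, the statement $f_T \in \End(\xnz, \shift{n})$ is immediate from Theorem~\ref{t:hed1}, which gives that $f_\infty$ is continuous on $\xnz$ and commutes with the shift, i.e. it is an endomorphism of $(\xnz, \shift{n})$. For the one-sided claim $f_T \in \End(\xnN, \shift{n})$, I would use the remark in the excerpt that the induced map on $\xnN$ is exactly the restriction of $f_T$ to the coordinates indexed by $-\N$: since the window $x_{i-k}\ldots x_i$ used to compute $y_i$ for $i \in -\N$ involves only coordinates with index $\le i \le 0$, the value $y_i$ for $i \in -\N$ never depends on any $x_j$ with $j > 0$. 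Hence $f_T$ restricts to a well-defined map $\xnN \to \xnN$, and Theorem~\ref{t:hed1} already asserts continuity on $\xnN$ and commutation with the shift there as well.

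The only genuine point requiring care is verifying that $f$ is well-defined as an element of $F(X_n, k+1)$, that is, that $q_{x_{i-k}\ldots x_{i-1}}$ really is determined by the length-$k$ block and lies in $Q_T$; this is where the hypotheses that $T$ is \emph{core} and synchronizing at level $k$ are used, guaranteeing via the synchronizing map $\mathfrak{s}_k$ that the forced state is unambiguous. Minimality is not essential to the argument and only ensures $T$ is the canonical representative of its $\omega$-equivalence class. The main (though still routine) obstacle is thus bookkeeping the index shifts to confirm that the window of length $k+1$ ending at position $i$ produces the correct symbol, matching the convention $(x_{i-m+1}\ldots x_i)f$ with $m = k+1$; once the indices are aligned, the result reduces entirely to the Curtis--Hedlund--Lyndon theorem.
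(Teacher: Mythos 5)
Your proof is correct, but it takes a different route from the paper. The paper's own proof is a direct verification: it takes $x$, sets $y=(x)f_T$, $u=(x)\shift{n}$, $v=(u)f_T$, and checks coordinate-by-coordinate that $v_{i-1}=y_i$, using exactly the observation that the state forced by $u_{i-k-1}\ldots u_{i-2}=x_{i-k}\ldots x_{i-1}$ is the same state $q$ appearing in the computation of $y_i$; continuity is taken as clear from the definition. You instead package the same key observation --- that the output at position $i$ depends only on the window $x_{i-k}\ldots x_i$ --- into an explicit sliding block code $f\in F(X_n,k+1)$ with $(a_0\ldots a_k)f=\lambda_T\bigl(a_k,\mathfrak{s}_k(a_0\ldots a_{k-1})\bigr)$, note $f_T=f_\infty$, and then invoke Theorem~\ref{t:hed1} (Curtis--Hedlund--Lyndon) to get continuity and shift-commutation on both $\xnz$ and $\xnN$ at once. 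Your reduction buys brevity and also anticipates the paper's own later development: the identification you construct is precisely the inverse direction of Remark~\ref{bijectionfromFinftytoPn}, which the paper only sets up afterwards en route to Corollary~\ref{cor:FinftyisoPn} ($F_\infty(X_n)\cong\spn{n}$). The paper's computation, by contrast, is self-contained and makes visible exactly how the synchronization hypothesis produces equivariance, without leaning on the quoted theorem. One small imprecision in your write-up: the synchronizing map $\mathfrak{s}_k$ lands in $Q_T$ whenever $T$ is synchronizing at level $k$, regardless of the core hypothesis; being core means the image of $\mathfrak{s}_k$ is \emph{all} of $Q_T$. So well-definedness of your $f$ needs only the synchronization hypothesis, and the core hypothesis merely ensures every state of $T$ actually participates in $f_T$ (as you rightly note, minimality is not used at all, and indeed the paper's proof does not use it either).
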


\begin{proof}
It is clear from the assumptions that $f_T$ is continuous and by definiton induces a map from $\xnz$ to itself and from $\xnN$ to itself. Now let 
$x\in\xnz \sqcup \xnN$ and $i\in\mathbb{Z}$ an appropriate index for $x$. Let $y=(x)f_T$. Observe that
$y_i=\lambda(x_i,q)$, where $q=\mathfrak{s}({x_{i - k}\ldots x_{i-1}})$
is the state forced by ${x_{i-k}\ldots x_{i-1}}$.

Now let $u=(x)\shift{n}$ and $v=(u)f_T$. Then
\[v_{i-1}=\lambda(u_{i-1},q'),\]
where $q'$ is the state of $T$ forced by ${u_{i-k-1}\ldots u_{i-2}}$. But
by assumption, $u_{i-k-1}\ldots u_{i-2}=x_{i-k}\ldots x_{i-1}$, and this
string forces state $q$; so $q'=q$, and hence $v_{i-1}=y_i$.

It now follows that $(x)f_T\shift{n}=(y)\shift{n}=v=(u)f_T=(x)\shift{n} f_T$.\qed
\end{proof}

The transducer in Figure~\ref{fig:shift2} induces the shift map on $\xnz$. More generally, let  $\Shift{n} = (\xn, Q_{\Shift{n}}, \pi_{\Shift{n}}, \lambda_{\Shift{n}})$ be the transducer defined as follows. Let $Q_{\Shift{n}}:= \{0,1,2,\ldots, n-1\}$, and let $\pi_{\Shift{n}}: \xn \times Q_{\Shift{n}}\to Q_{\Shift{n}}$  and $\lambda_{\Shift{n}}: \xn \times Q_{\Shift{n}}\to \xn$ be defined by $\pi_{\Shift{n}}(x, i) = x$ and $\lambda_{\Shift{n}}(x, i) = i$ for all $x \in X_{n}$ $i \in Q_{\shift{n}}$. Then $f_{\Shift{n}} = \shift{n}$.

In \cite{AutGnr}, the authors show that the set $\spn{n}$ of weakly
minimal finite synchronous core   transducers is a monoid. (Note that core transducers are strongly synchronizing.) The monoid operation
consists of taking the product of transducers and reducing it by removing non-core states and identifying $\omega$-equivalent states to obtain a weakly minimal and synchronous representative. Let $\mathcal{P}_n$ be the subset
of $\spn{n}$ consisting of transducers $T$ such that $f_{T}$ is an automorphism
of the two-sided  shift dynamical system. We observe that elements of $\pn{n}$ may not be minimal. It is clear that $\Shift{n} \in \pn{n}$. For an element $T \in \spn{n}$, we use the language $T$ induces an automorphism of the two-sided shift dynamical system to mean that $f_{T}$ is an element of $\aut{\xnz, \shift{n}}$.

\subsection{De Bruijn graphs and $\End(\xnz, \shift{n})$}

The \emph{de Bruijn graph} $G(n,m)$ can be defined as follows, for integers
$m\ge1$ and $n\ge2$. The vertex set is $X_n^m$, where $X_n$ is the alphabet
$\{0,\ldots,n-1\}$ of cardinality $n$. There is a directed arc from
$a_1\ldots a_{m}$ to $a_2\ldots a_{m}a_0$, with label $a_0$.

Note that, in the literature, the directed edge is from $a_0a_1\ldots a_{m-1}$ to $a_1\ldots a_{m-1}a_m$ and the label on this edge is often given as the
$(m+1)$-tuple $a_0a_1\ldots a_{m-1}a_m$. However, to fit with the notation
already defined, the equivalent definition given above is more apt. 

Figure~\ref{fig-DB-3-2-straight} shows the de Bruijn graph $G(3,2)$.

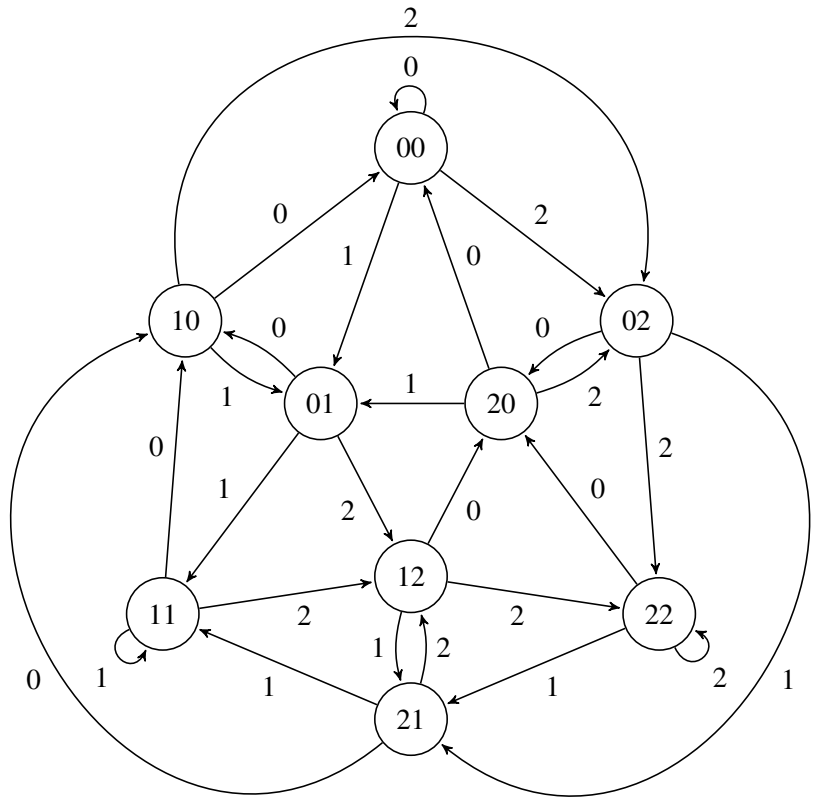
\begin{figure}[htbp]
\begin{center}
\begin{tikzpicture}[->,>=stealth',shorten >=1pt,auto,node distance=2.3cm,on grid,semithick,every state/.style={draw=black,text=black}]
   \node[at={(0,2.9)},state] (a) {$00$}; 
   \node[at={(-3.3,-3.3)},state] (b)  {$11$}; 
   \node[at={(3.3,-3.3)},state] (c) {$22$}; 
   \node[at={(-1.2,-0.5)},state] (d)   {$01$}; 
   \node[at={(-3.0,0.6)},state] (f)  {$10$}; 
   \node[at={(3.0,0.6)},state] (e) {$02$}; 
   \node[at={(1.2,-0.5)},state] (h)  {$20$}; 
   \node[at={(0,-2.8)},state] (g)  {$12$}; 
   \node[at={(0,-4.7)},state] (i){$21$}; 

    \path (a) edge [out=70,in=110,loop,min distance=0.5cm]node [swap]{$0$} (a);
    \path (a) edge node [swap]{$1$} (d);
    \path (a) edge node {$2$} (e);
    \path (b) edge [out=205,in=245,loop,min distance=0.5cm]node [swap]{$1$} (b);
    \path (b) edge node {$0$} (f);
    \path (b) edge node [swap] {$2$} (g);
    \path (c) edge [out=-65,in=-25,loop,min distance=0.5cm]node [swap]{$2$} (c);
    \path (c) edge node {$1$} (i);
    \path (c) edge node [swap]{$0$} (h);
    \path (d) edge node [swap] {$1$} (b);
    \path (d) edge [bend right=15] node [swap]{$0$} (f);
    \path (d) edge node[swap] {$2$} (g);
    \path (e) edge [bend left=100,min distance=4.45cm] node {$1$} (i);
    \path (e) edge [bend right=15] node [swap] {$0$} (h);
    \path (e) edge  node {$2$} (c);
    \path (f) edge node {$0$} (a);
    \path (f) edge [bend right=15] node [swap]{$1$} (d);
    \path (f) edge [bend left=100,min distance=4.45cm] node {$2$} (e);
    \path (g) edge node[swap] {$0$} (h);
    \path (g) edge node [swap] {$2$} (c);
    \path (g) edge [bend right=15] node [swap] {$1$} (i);
    \path (h) edge [bend right=15] node [swap] {$2$} (e);
    \path (h) edge node [swap] {$0$} (a);
    \path (h) edge node [swap] {$1$} (d);
    \path (i) edge [bend left=100,min distance=4.45cm] node {$0$} (f);
    \path (i) edge [bend right=15] node [swap] {$2$} (g);
    \path (i) edge node {$1$} (b);
\end{tikzpicture}
\end{center}
\caption{The de Bruijn graph $G(3,2)$.\label{fig-DB-3-2-straight}}
\end{figure}

Observe that the de Bruijn graph $G(n,m)$ describes an automaton over the
alphabet $X_n$. Moreover, this automaton is synchronizing at level $m$: when
it reads the string $b_0b_1\ldots b_{m-1}$ from any initial state, it moves
into the state labeled ${b_0b_1\ldots b_{m-1}}$.

The de Bruijn graph is, in a sense we now describe, the universal automaton
over $X_n$ which is synchronizing at level $m$.

We define a \emph{folding} of an automaton $A$ over the alphabet $X_n$ to be
an equivalence relation $\equiv$ on the state set of $A$ with the property
that, if $a\equiv a'$ and $\pi_A(x,a)=b$, $\pi_A(x,a')=b'$, then $b\equiv b'$.
That is, reading the same letter from equivalent states takes the automaton
to equivalent states. If $\equiv$ is a folding of $A$, then we can uniquely
define the \emph{folded automaton} $A/{\equiv}$: the state set is the set
of $\equiv$-classes of states of $A$; and, denoting the $\equiv$-class of
$a$ by $[a]$, we have $\pi_{A/{\equiv}}(x,[a])=[\pi_A(x,a)]$ (note that this
is well-defined).

\begin{prop}
The following are equivalent for an automaton $A$ on the alphabet $X_n$:
\begin{itemize}\itemsep0pt
\item $A$ is synchronizing at level $m$, and is core;
\item $A$ is the folded automaton from a folding of the de Bruijn graph $G(n,m)$.
\label{p:fold}
\end{itemize}
\end{prop}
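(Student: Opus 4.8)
The plan is to prove the two implications separately, using the fact already recorded in the text that the de Bruijn graph $G(n,m)$ is itself synchronizing at level $m$---with synchronizing map the identity on $X_n^m$, since reading $b_0\ldots b_{m-1}$ from any vertex lands at the vertex $b_0\ldots b_{m-1}$---and is core. For the implication that a folded automaton $A=G(n,m)/{\equiv}$ is synchronizing at level $m$ and core, I would argue that both properties descend through the folding: reading a word $v\in X_n^m$ from a class $[a]$ gives $[\pi_{G(n,m)}(v,a)]=[v]$, which is independent of $[a]$, so $A$ is synchronizing at level $m$ with $\mathfrak{s}_m^A(v)=[v]$; and every state $[w]$ of $A$ equals $\mathfrak{s}_m^A(w)$, so $A$ is core.

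For the converse, suppose $A$ is synchronizing at level $m$ and core. Writing $q_w:=\mathfrak{s}_m(w)$ for the state of $A$ forced by $w\in X_n^m$, I would define an equivalence relation on the vertex set $X_n^m$ of $G(n,m)$ by $w\equiv w'$ if and only if $q_w=q_{w'}$. The crux of the argument is the transition identity
\[ q_{a_2\ldots a_m x}=\pi_A(x,q_{a_1\ldots a_m}),\qquad a_1\ldots a_m\in X_n^m,\ x\in X_n, \]
which relates the de Bruijn transition (drop the leftmost letter, append $x$ on the right) to the action of $A$. I expect this to be the main obstacle, and I would prove it by reading the length-$(m+1)$ word $a_1a_2\ldots a_m x$ in $A$ from an arbitrary state $q$ and grouping the computation in two ways: reading $a_1\ldots a_m$ first forces $q_{a_1\ldots a_m}$ and then applying $x$ gives the right-hand side, while reading $a_1$ first and then the length-$m$ word $a_2\ldots a_m x$ forces $q_{a_2\ldots a_m x}$, the left-hand side; since both describe $\pi_A(a_1\ldots a_m x,q)$ they agree.

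Granting the identity, I would finish as follows. If $w\equiv w'$ then for each $x\in X_n$ the de Bruijn images satisfy $q_{\pi_{G(n,m)}(x,w)}=\pi_A(x,q_w)=\pi_A(x,q_{w'})=q_{\pi_{G(n,m)}(x,w')}$, so those images are again $\equiv$-related; hence $\equiv$ is a folding. The assignment $[w]\mapsto q_w$ is well defined and injective by the definition of $\equiv$, and surjective precisely because $A$ is core, so it is a bijection from the states of $G(n,m)/{\equiv}$ onto $Q_A$. Finally the transition identity shows this bijection intertwines the transition functions, since $\pi_{G(n,m)/{\equiv}}(x,[w])=[\pi_{G(n,m)}(x,w)]$ maps to $q_{\pi_{G(n,m)}(x,w)}=\pi_A(x,q_w)$; thus it is an isomorphism of automata and $A=G(n,m)/{\equiv}$, completing the proof.
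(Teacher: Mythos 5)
Your proof is correct and follows essentially the same route as the paper: the same equivalence relation $a\equiv b$ iff $q_a=q_b$ on the vertices of $G(n,m)$, the same bijection between $\equiv$-classes and states of $A$ (injective by definition of $\equiv$, surjective by coreness), and the same isomorphism of automata. The only difference is that you spell out the details the paper dismisses as ``clear'' or ``readily seen'' --- in particular the transition identity $q_{a_2\ldots a_m x}=\pi_A(x,q_{a_1\ldots a_m})$, proved by associating the computation of $\pi_A(a_1\ldots a_m x,q)$ in two ways, which is exactly the right justification that $\equiv$ is a folding.
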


\begin{proof}
The ``if'' statement is clear. So suppose that $A$ is synchronizing
at level $m$. Define a relation $\equiv$ on the vertex set $X_n^m$ of $G(n,m)$
by the rule that $a\equiv b$ if the states of $A$ after reading $a$ and $b$
respectively are equal. (These states are independent of the initial state,
by assumption.) It is readily seen that $\equiv$ is a folding of $G(n,m)$,
and the $\equiv$-classes are bijective with the states of $A$. (The fact
that $A$ is core shows that the map which takes the state $q$ of $A$ to the set of $\equiv$-classes of $m$-tuples which bring $A$ to state $q$ is well-defined and injective by definition of $\equiv$, and is onto since $A$ is core.) Moreover, this bijection is clearly an isomorphism.\qed
\end{proof}

\begin{prerk} An automaton $A$ over an alphabet $X_n$ can be regarded, in
terms of universal algebra, as an algebra with unary operators $\nu_x$ for
$x\in X_n$, where the elements of the algebra are the states, and
$a\nu_x=\pi(x,a)$. A folding is precisely the kernel of an algebra homomorphism,
and the folded automaton is isomorophic to the image of the homomorphism.
The automata which are synchronizing at level $m$ form a variety, defined by
the identities
\[a\nu_{x_0}\nu_{x_1}\cdots\nu_{x_{m-1}}
=b\nu_{x_0}\nu_{x_1}\cdots\nu_{x_{m-1}}\]
for all elements $a,b$ of the algebra and all choices of $x_0,\ldots,x_{m-1}$.
\end{prerk}

We now describe how to make the de Bruijn automaton into a transducer by
specifying outputs. Let $f\in F(X_n,m+1)$ be a function from $X_n^{m+1}$ to
$X_n$. The output function of the transducer $T_f$ will be given by 
\[\lambda_T(x,a_{m-1}a_{m-2}\ldots a_{0})=(a_{m-1}a_{m-2}\ldots a_{0}x)f.\]

In other words, if the transducer reads $m+1$ symbols, then its output is
obtained by applying $f$ to the sequence of symbols read. Note that this
transducer is synchronous; it writes one symbol for each symbol read. When
applied to $x\in\xnz$, it produces $y=(x)f_\infty\in\xnz$. Recall that the function $f \in F(X_{n}, 2)$ given by $(x_{-1}x_0)f = x_{-1}$ for all $x_{-1},x_0 \in X_n$, induces the shift map $\shift{n}$ on $\xnz$ and $\xnN$. For this map we have $T_{f} = \Shift{n}$.

\begin{prerk}\label{bijectionfromFinftytoPn}
Given any de Bruijn graph $G(n,m)$, and any transducer $T$ with underlying
directed graph $G(n,m)$ there is a function $f \in F(X_n, m+1)$ such that
$T_f = T$.
\end{prerk}

Clearly the transducer $T_f$ is synchronizing at level $m$. This remains true 
if we minimise it or identify its $\omega$-equivalent states; so by Proposition~\ref{p:fold}, the
resulting minimal or weakly minimal transducer is a folding of the de Brujin graph $G(n,m)$. Let $T \in \spn{n}$ be the weakly-minimal representative of $T_{f}$, then $f_{T} = f_{T_f} = f_{\infty}$ holds since identifying $\omega$-equivalent states does not affect the map $f_{T}$. 

\begin{prerk}
The preceding paragraph together with  Remarks \ref{F(X_n,m)containedinF(X_n,m+1)} and \ref{bijectionfromFinftytoPn} show that there is a bijection from $F_{\infty}(X_n)$ to $\spn{n}$. The next result demonstrates that this bijection is a monoid homomorphism.
\end{prerk}
 
\begin{prop}\label{prop:homomorphismfromPntoFinfinitiy}
Let $A,B\in\spn{n}$. Then $ f_A \circ f_B = f_{A\ast B}$.
\end{prop}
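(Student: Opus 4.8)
The plan is to prove the identity by a direct pointwise computation on $\xnz$; the case of $\xnN$ then follows by restriction, since (as noted in the excerpt) the induced map there is just the restriction of the two-sided map to the negatively-indexed coordinates. Since $A,B\in\spn{n}$ are foldings of de Bruijn graphs, they are strongly synchronizing; say $A$ is synchronizing at level $j$ and $B$ at level $k$. By Proposition~\ref{p:synchlengthsadd} the product $A\ast B$ is synchronizing at level $m:=j+k$, so $f_{A\ast B}$ is defined. For a word $w$ of length at least the relevant synchronizing level, write $q^A_w$, $q^B_w$ for the states of $A$, $B$ forced by $w$ (the notation $q_w$ of the excerpt, decorated with the transducer); all forced states lie in the respective cores, and since identifying $\omega$-equivalent states and discarding non-core states do not change the induced map, this computation simultaneously computes $f$ of the reduced monoid product $A\ast B\in\spn{n}$. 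Fix $x\in\xnz$, set $y=(x)f_A$ and $z=(y)f_B$; the goal is to show $z=(x)f_{A\ast B}$, which is exactly $(x)(f_A\circ f_B)=(x)f_{A\ast B}$ in the left-to-right convention.

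The heart of the argument is to identify the state of $A\ast B$ forced by the window $w:=x_{i-m}\ldots x_{i-1}$. Writing states of $A\ast B$ as pairs $(p,q)\in Q_A\times Q_B$ and reading $w$ from an arbitrary start $(p_0,q_0)$, the $A$-coordinate is $\pi_A(w,p_0)$, which, since $|w|=m\ge j$, equals the forced state $q^A_{x_{i-j}\ldots x_{i-1}}$ independently of $p_0$. The $B$-coordinate is $\pi_B(\lambda_A(w,p_0),q_0)$, the state reached in $B$ after reading the length-$m$ output word $\lambda_A(w,p_0)$. The key observation is that the output letter of $A$ produced on reading $x_t$ is $\lambda_A\bigl(x_t,\pi_A(x_{i-m}\ldots x_{t-1},p_0)\bigr)$, and for every $t\ge i-k$ the prefix $x_{i-m}\ldots x_{t-1}$ has length $\ge j$, so this intermediate state is forced to $q^A_{x_{t-j}\ldots x_{t-1}}$ and the output letter equals $y_t$ by the definition of $y$. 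Hence the last $k$ letters of $\lambda_A(w,p_0)$ are precisely $y_{i-k}\ldots y_{i-1}$, independently of $p_0$, and since $B$ is synchronizing at level $k$ the $B$-coordinate is forced to $q^B_{y_{i-k}\ldots y_{i-1}}$. Thus the state of $A\ast B$ forced by $w$ is the pair $\bigl(q^A_{x_{i-j}\ldots x_{i-1}},\,q^B_{y_{i-k}\ldots y_{i-1}}\bigr)$.

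With this the computation closes immediately. Using the definition of $\lambda_{A\ast B}$ and then of $y$,
\begin{align*}
((x)f_{A\ast B})_i
&=\lambda_{A\ast B}\bigl(x_i,(q^A_{x_{i-j}\ldots x_{i-1}},\,q^B_{y_{i-k}\ldots y_{i-1}})\bigr)\\
&=\lambda_B\bigl(\lambda_A(x_i,q^A_{x_{i-j}\ldots x_{i-1}}),\,q^B_{y_{i-k}\ldots y_{i-1}}\bigr)\\
&=\lambda_B(y_i,\,q^B_{y_{i-k}\ldots y_{i-1}})=z_i,
\end{align*}
where the last equality is the definition of $z=(y)f_B$. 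As $i$ and $x$ were arbitrary, $f_{A\ast B}=f_A\circ f_B$.

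The only genuinely delicate point, and the one I expect will take the most care to write cleanly, is the bookkeeping in the second paragraph showing that the final $k$ output letters of $A$ on the window $w$ have already entered the synchronized regime and therefore agree with the intermediate sequence $y$. This is precisely where the additivity $m=j+k$ of synchronization levels is used, and it is the reason the two windows $x_{i-j}\ldots x_{i-1}$ and $y_{i-k}\ldots y_{i-1}$ line up correctly; everything else is a routine substitution into the definitions of the product transducer and of $f$.
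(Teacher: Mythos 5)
Your proof is correct and takes essentially the same route as the paper's: both fix $x\in\xnz$, pass through $y=(x)f_A$ and $z=(y)f_B$, show that the state of $A\ast B$ forced by the length-$(j+k)$ window of $x$ is the pair consisting of the $A$-state forced by the last $j$ letters of $x$ and the $B$-state forced by the last $k$ letters of $y$, and then conclude by the definition of $\lambda_{A\ast B}$. The only difference is one of exposition: you spell out the intermediate-state bookkeeping that the paper compresses into the single assertion that reading the block $x_{i-k}\ldots x_{i-1}x_i$ from the state forced by $x_{i-j-k}\ldots x_{i-k-1}$ outputs $y_{i-k}\ldots y_{i-1}y_i$.
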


\begin{proof}
Let $j, k $ be natural numbers such that $A$ is synchronizing at level $j$ and $B$ is synchronizing at level $k$. By Proposition
\ref{p:synchlengthsadd}, $A \ast B$ is synchronizing at level $k+j$. 

Let $x\in\xnz$ and $i \in \mathbb{Z}$ be arbitrary and $y,z,t \in \xnz$ be such that $y = (x)f_A$, $z = (y)f_B$ and $t = (x)f_{A\ast B}$. Set  $a:= x_{i-j-k}\ldots x_{i-1} \in X_{n}^{j+k}$, $b:= x_{i-k}\ldots x_{i-1} \in X_{n}^{k}$, $b'= x_{i-j}\ldots x_{i-1} \in X_{n}^{j} $ and
$c:= x_{i-j-k} \ldots x_{i-k-1} \in X_{n}^{j}$.   

By definition of the function $f_{A}$, the block  $d:= y_{i-k}y_{i-k+1}\ldots y_{i-1}$  of $y$ satisfies $dy_i$ is precisely equal to
${\lambda_A(\rev{bx_i}, q_{{c}})}$. Once more, by definition, $z_i = \lambda_B(y_i,p_{{d}})$ and since $y_i=\lambda_{A}(x_i, q_{{b'}})$,  $z_i = \lambda_{A\ast B}(x_i,(q_{{b'}}, p_{{d}}))$ as well. However,
the state of $A\ast B$ forced by ${a}$ is precisely $(q_{{b'}}, p_{{d}})$, and so we conclude that $t_i = \lambda_{A\ast B}(x_i,(q_{b'}, p_{d})) = z_i$. Since $i$ and $x$  were arbitrarily chosen, $t=z$ and $f_{A} \circ f_{B} = f_{A\ast B}$. \qed
\end{proof}

\begin{cor}\label{cor:FinftyisoPn}
	The monoid $F_{\infty}(X_{n})$ is isomorphic to $\spn{n}$.
\end{cor}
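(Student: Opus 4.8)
The plan is to assemble the isomorphism from the pieces already established in the excerpt, treating Corollary~\ref{cor:FinftyisoPn} as a bookkeeping consequence rather than a fresh computation. The Remark preceding Proposition~\ref{prop:homomorphismfromPntoFinfinitiy} asserts that there is a \emph{bijection} $\Phi\colon F_{\infty}(X_n)\to\spn{n}$, built from Remark~\ref{F(X_n,m)containedinF(X_n,m+1)} (which says that lengthening the window of a block function $f$ does not change $f_\infty$, so the assignment is well defined on $F_\infty(X_n)$ independently of the representative $m$) together with Remark~\ref{bijectionfromFinftytoPn} (which says every transducer on the underlying graph $G(n,m)$ is $T_f$ for some $f\in F(X_n,m+1)$, giving surjectivity onto weakly-minimal representatives). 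So the first step is simply to name this map $\Phi$ and record that it is a bijection of underlying sets.

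Next I would verify that $\Phi$ is a monoid homomorphism. Concretely, given $f_\infty, g_\infty\in F_\infty(X_n)$ with weakly-minimal synchronous core representatives $A=\Phi(f_\infty)$ and $B=\Phi(g_\infty)$ in $\spn{n}$, the discussion after Remark~\ref{bijectionfromFinftytoPn} shows $f_A=f_\infty$ and $f_B=g_\infty$ (identifying $\omega$-equivalent states and removing non-core states leaves the induced map $f_T$ unchanged). Proposition~\ref{prop:homomorphismfromPntoFinfinitiy} then gives
\[
f_{A\ast B}=f_A\circ f_B=f_\infty\circ g_\infty,
\]
and since the monoid operation in $\spn{n}$ is exactly "take the product $A\ast B$ and reduce to its weakly-minimal synchronous core representative," we have $\Phi(f_\infty\circ g_\infty)=A\ast B=\Phi(f_\infty)\ast\Phi(g_\infty)$. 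The identity element also matches: the identity map on $\xnz$ arises from the single-state transducer, which is the identity of $\spn{n}$. Hence $\Phi$ respects products and identities.

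A bijective monoid homomorphism is a monoid isomorphism, so combining the two steps yields $F_\infty(X_n)\cong\spn{n}$, which is exactly the statement of the corollary.

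The one point deserving genuine care — and the step I expect to be the main obstacle — is the compatibility of the multiplications on the two sides. Composition in $F_\infty(X_n)$ corresponds, at the level of block functions, to the composite window function $h$ of window length $l+m-1$ described in Section~\ref{section:preliminary} (satisfying $h_\infty=f_\infty\circ g_\infty$), whereas the product in $\spn{n}$ is defined via the transducer product $\ast$ \emph{followed by reduction} to the weakly-minimal representative. One must check that these two routes land on the same element, i.e.\ that $\Phi$ genuinely intertwines them rather than merely matching objects. This is precisely what Proposition~\ref{prop:homomorphismfromPntoFinfinitiy} secures once we know that reduction does not alter the induced map and that $\Phi$ is injective (so that equal induced maps force equal $\spn{n}$-elements); the remaining work is only to state clearly that injectivity of $\Phi$ lets us pass from the equality of induced maps $f_{A\ast B}=f_\infty\circ g_\infty$ back to the equality of transducers $\Phi(f_\infty\circ g_\infty)=A\ast B$.
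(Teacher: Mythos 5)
Your proposal is correct and follows essentially the same route as the paper: the paper treats the corollary as an immediate consequence of the remark that Remarks~\ref{F(X_n,m)containedinF(X_n,m+1)} and~\ref{bijectionfromFinftytoPn} (together with the observation that passing to the weakly minimal representative preserves $f_T$) yield a bijection $F_\infty(X_n)\to\spn{n}$, combined with Proposition~\ref{prop:homomorphismfromPntoFinfinitiy} showing this bijection respects the products. Your extra care about distinguishing the raw product $A\ast B$ from the reduced product in $\spn{n}$, and using injectivity to pass from equality of induced maps to equality of transducers, is exactly the bookkeeping the paper leaves implicit.
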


Let $\shn{n}$ be the submonoid of $\spn{n}$ consisting of those
elements $P\in\mathcal{P}_n$ all of whose states are homeomorphism states. Set $\hn{n}$ to be the largest inverse closed subset of $\shn{n}$ (where we take the automata theoretic inverse in this case). Observe that $\hn{n}$ is a group and by Proposition~\ref{prop:homomorphismfromPntoFinfinitiy} the automata theoretic inverse of  $\hn{n}$ coincides with its inverse in $\spn{n}$ as a map of $\xnz$. Thus, $\hn{n}$, as a set, is precisely the set of  core, synchronous, invertible, minimal, bi-synchronizing transducers. It is a result in \cite{AutGnr} that $\hn{n}$ is isomorphic to a subgroup of $\out{G_{n,1}}$ which in turn is a subgroup of $\out{G_{n,r}}$ for all $1\leq r<n$.  We further remark that right permutive maps
$f \in F(X_n, m)$ give rise to transducers $T_{f}$ which are elements of $\shn{n}$. By Theorem~\ref{t:hed2} we therefore have the following corollary:

\begin{theorem}
$RF_{\infty} \cong \shn{n}$ and  $Aut(\xnN, \shift{n}) \cong \mathcal{H}_n$. Thus $Aut(\xnN, \shift{n})$ is isomorphic to a subgroup of $\out{G_{n,r}}$.
\end{theorem}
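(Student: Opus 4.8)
The plan is to transfer everything through the monoid isomorphism $\Phi\colon F_{\infty}(X_{n})\to\spn{n}$ furnished by Corollary~\ref{cor:FinftyisoPn} (the map sending $f_{\infty}$ to the weakly minimal representative of the de Bruijn transducer $T_{f}$, which is a homomorphism by Proposition~\ref{prop:homomorphismfromPntoFinfinitiy}), and then restrict to the relevant submonoids and their groups of units. First I would establish that $\Phi$ restricts to a monoid isomorphism $RF_{\infty}\cong\shn{n}$. The remark preceding the statement already gives the inclusion $\Phi(RF_{\infty})\subseteq\shn{n}$: a right permutive $f$ makes every local output map $\rho_{a}\colon x\mapsto(ax)f$ a permutation of $X_{n}$, whence each initial transducer $T_{f,a}$ is a self-homeomorphism of $\xnn$ (injective and surjective by a coordinate-by-coordinate induction using that each $\rho_{a}$ is a bijection), so all states of $T_{f}$ are homeomorphism states; since identifying $\omega$-equivalent states and passing to the core preserve the induced local maps, the same holds for the weakly minimal representative, placing it in $\shn{n}$.

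For the reverse inclusion I would show that every $P\in\shn{n}$ equals $\Phi(f_{\infty})$ for some right permutive $f$. Write $P$ as the weakly minimal representative of $T_{f}$ with $f\in F(X_{n},m+1)$ carried on the de Bruijn graph $G(n,m)$ (Remark~\ref{bijectionfromFinftytoPn}, Proposition~\ref{p:fold}). Since $G(n,m)$ is strongly connected it is core, so every state $a$ of $T_{f}$ induces the same continuous map on $\xnn$ as its image state $[a]$ in $P$; as $[a]$ is a homeomorphism state, $P_{[a]}$ is surjective, and reading off first output symbols forces $\rho_{[a]}=\rho_{a}$ to be onto, hence a bijection of the finite set $X_{n}$. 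Thus $x\mapsto(ax)f$ is a permutation for every history $a\in X_{n}^{m}$, i.e. $f$ is right permutive and $f_{\infty}\in RF_{\infty}$. Combining the two inclusions, the restriction $\Phi\colon RF_{\infty}\to\shn{n}$ is a monoid isomorphism, which is the first assertion.

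The remaining assertions follow by passing to groups of units. By definition $\hn{n}$ is the largest inverse-closed subset of $\shn{n}$, i.e. its group of units. On the other side, the last theorem of Section~\ref{section:preliminary} identifies the largest inverse-closed subset of $RF_{\infty}$ with the one-sided shift automorphism group, the transducer action of Subsection~\ref{subsection:transducers} realising this group as $\aut{\xnN,\shift{n}}$. A monoid isomorphism carries units to units bijectively, so $\Phi$ restricts to a group isomorphism between these two unit groups; that is, $\aut{\xnN,\shift{n}}\cong\hn{n}$. Finally, \cite{AutGnr} shows $\hn{n}$ is isomorphic to a subgroup of $\out{G_{n,r}}$, and composing the two isomorphisms yields the desired embedding of $\aut{\xnN,\shift{n}}$ into $\out{G_{n,r}}$.

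I expect the main obstacle to be the clean equivalence that all states of a core synchronous transducer are homeomorphism states if and only if the associated $f$ is right permutive, and in particular making precise that surjectivity of the local actions $P_{q}$ forces each one-symbol output map $\rho_{q}$ to be a bijection, while conversely bijectivity of all $\rho_{q}$ forces each $P_{q}$ to be a homeomorphism. The delicate point is that this concerns the \emph{local} initial actions $P_{q}\colon\xnn\to\xnn$, not the global map $f_{P}$ on $\xnN$, which (as the example in Section~\ref{section:preliminary} shows) may fail to be injective even when $P\in\shn{n}$; it is only after restricting to the inverse-closed subset that global invertibility is recovered. A secondary, purely bookkeeping matter is reconciling the $\xnn$ and $\xnN$ conventions between the Section~\ref{section:preliminary} corollary and the present statement, which amounts to the canonical identification of the two one-sided shift systems.
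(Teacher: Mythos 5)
Your proposal is correct and follows essentially the same route as the paper: the theorem there is stated as a corollary of the isomorphism $F_{\infty}(X_n)\cong\spn{n}$ (Corollary~\ref{cor:FinftyisoPn}), the observation that right permutive maps yield transducers all of whose states are homeomorphism states, the Hedlund-derived theorem identifying $\Aut(\xnN,\shift{n})$ with the largest inverse-closed subset of $RF_{\infty}(X_n)$, and the cited embedding of $\hn{n}$ into $\out{G_{n,r}}$ from \cite{AutGnr}. Your write-up merely supplies details the paper leaves implicit, most notably the reverse inclusion $\shn{n}\subseteq\Phi(RF_{\infty})$ via surjectivity of first-letter outputs, and the reconciliation of the automata-theoretic inverse with the monoid inverse when passing to the unit groups.
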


\section{Automorphisms of de Bruijn graphs and $\hn{n}$}\label{sec:FiniteSubgroups}

In this section we show that a finite subgroup $G$  of $\hn{n} \cong \aut{\xnN, \shift{n}}$ is isomorphic to the automorphism group $\aut{\Gamma}$ of the underlying directed graph $\Gamma$ of an automaton $A$ arising from a folding of a de Bruijn graph. Moreover, for any directed graph $\Gamma$ underlying an automaton $A$ arising from a folding of a de Bruijn graph, there is a subgroup $G$ of $\hn{n}$ isomorphic to $\aut{\Gamma}$. We make use of this result and results  \cite{BoyleFranksKitchens} to characterize the group $\aut{\Gamma}$ for  $\Gamma$ the underlying directed graph of an automaton $A$ arising from a folding of a de Bruijn graph. In particular we show that the automorphism group of a de Bruijn $G(n,m)$  is precisely the symmetric group on a set of size $n$.

\subsection{Elements of $\hn{n}$ from automorphisms of directed graphs underlying folded automata}
We use the connection to de Bruijn graphs to construct elements of $\hn{n}$. Recall that an automaton $A$ may be regarded as labeled directed graph with vertex set $Q_{A}$,  and edge set $E_{A} \subset    Q_{A} \times X_{n}\ \times Q_{A}$. In this view, for vertices or states $p, q \in Q_{A}$, and a letter $x \in X_{n}$, $(p,x,q) \in E_{A}$ is an edge from $p$ to $q$ with label $x$ if and only if $\pi_{A}(x,p) = q$. Let $G_{A}$ denote the unlabeled directed graph corresponding to an automaton $A$. We may therefore consider the automorphisms of the directed graph $G_{A}$ underlying an automaton $A$. We construct elements of $\hn{n}$ from  automorphisms of $G_{A}$ where $A$ is a folded automata arising from foldings of de Bruijn graphs. Though, we do not distinguish between an automaton and the labeled directed graph it generates, we shall distinguish between an automaton $A$ and its unlabeled directed graph $G_{A}$. 

It turns out that all elements of $\hn{n}$ arising from an automorphism of the underlying graph of a folded automaton have finite order. In the paper \cite{BoyleFranksKitchens} Boyle \emph{et al.}  show that $\hn{n}$ is generated by elements of finite order, and give a generating set: the `vertex' and `simple' automorphisms. The elements in this generating set are in fact a subset of those elements of $\hn{n}$ constructed from automorphisms of folded automata that are considered here.

Let $G= (V,E, \iota, \tau)$ be a directed graph where $V$ is the set of vertices of $G$, $E$ is its set of edges, $\iota: E \to V$ is a map which returns the origin of an edge, and $\tau$ is a map that returns the terminus of an edge. An automorphism of $G$ is a map $\phi:= (\phi_{V}, \phi_{E})$ such that:
\begin{enumerate}
	\item $\phi_{V}: V \to V$ is a bijection,
	\item $\phi_{E}: E \to E$ is a bijection, and,
	\item for an edge $e \in E$, $ ((e)\iota)\phi_{V} = ((e)\phi_{E})\iota$ and $((e)\tau)\phi_{V} = ((e)\phi_{E})\tau$.  
\end{enumerate}
    In general usage, we shall suppress subscripts in the maps $\phi_{E}$ and $\phi_{V}$, the arguments determining which is meant in each case. Thus for an edge $e$ we write $(e)\phi$ for $(e)\phi_{E}$ and $((e)\iota)\phi$ for $((e)\iota)\phi_{V}$.  

Let $A$ be a folded automaton arising from a folding of a de Bruijn graph and let $\phi$ be an automorphism of the directed graph $G_{A}$ corresponding to $A$. Let $H(A,\phi)$ be a transducer with state set $Q_{H(A,\phi)}: = Q_{A}$ transition function $\pi_{H(A,\phi)} := \pi_{A}$ and output function $\lambda_{H(A,\phi)}: X_{n} \times Q_{H(A,\phi)} \to X_{n}$  defined as follows. For $x \in X_{n}$ and $p \in Q_{A}$, let $q = \pi_{A}(x, p)$ so that $(p,x,q)$ is an edge of $G_{A}$, let  $(r, y,s)$ be the image of $(p,x,q)$ under $\phi$, noting that $(p)\phi = r$ and $(q)\phi =s$, then set $\lambda_{H(A,\phi)}(x, p)= y$. 

The transducer $H(A, \phi)$ can be thought of as the result of gluing the automata $A$ to itself along the map $\phi: Q_{A} \to Q_{A}$.  That is, if $p,q \in Q_A$ and $(p,x,q)$ is an edge from $p$ to $q$ with label $x$ in $A$, and if $y$ is the label of the edge $((p,x,q))\phi$ in $A$, then the vertex $p$ is identified with the vertex $(p)\phi$, the vertex $q$ with the vertex $(q)\phi$, the input label is $x$ and the output label is the label $y$. Note that this fits in our view of a transducer as an ordered pair of automata where there is an isomorphism of the underlying graphs which associates to each edge of that graph a domain and range label. 

We make a few observations:
\begin{enumerate}
	\item For each state $q \in Q_{H(A, \phi)}$, the map $\lambda_{H(A, \phi)}(\centerdot, q): X_{n} \to X_{n}$ is a bijection. This follows from the definition of $G_{A}$: for each  $x \in X_{n}$ there is precisely one edge  of the form $((q)\phi, x, p)$ based at the vertex $(q)\phi$.  It follows that the transducer $H(A, \phi)$ is invertible.

	\item  If $A$ is synchronizing at level $k$ (and so a folding of $G(n,k)$ by Proposition~\ref{p:fold}) then both $H(A, \phi)$ and $H(A, \phi)^{-1}$ are synchronizing at level $k$ hence the minimal $\overline{H(A, \phi)}$ representative of $H(A, \phi)$ is an  element of $\hn{n}$. 
	
	\item In fact, for a state $q \in Q_{A}$, if $W_{k,q}$ is the set of words of length $k$, that force the state $q$, i.e., $$W_{k,q}:= \{ a \in X_{n}^{k}: \pi_{H(A, \phi)}(a,q) = q \},$$ then $\{\lambda_{H(A, \phi)}(a, p) \mid a \in Q_{k,q}, p \in Q_{H(A, \phi)} \}$ is equal to $W_{k, (q)\phi}$. 
	
	\item Let $A(H(A, \phi)) = (X_n, Q_{H(A, \phi)}, \pi_{H(A, \phi)})$ and $$A(H(A, \phi)^{-1}) = (X_{n}, Q_{H(A, \phi)^{-1}}, \pi_{H(A, \phi)^{-1}})$$ be the automata corresponding to $H(A, \phi)$ and $H(A, \phi)^{-1}$ when outputs are ignored. By construction  $A(H(A, \phi))=A$, and the previous two points indicate that $A(H(A, \phi)^{-1})$ is also isomorphic as an automaton to $A$ (by the map sending a state $q^{-1}$  of $H(A, \phi)^{-1}$ to the state $(q)\phi$ of $A$). 
\end{enumerate}

The third point above and results of the paper \cite{OlukoyaOrder} show that an element of $\hn{n}$ obtained from an automorphism of a folded automaton must have finite order. This result, which also follows from Theorem~\ref{thm:autfoldembedsinHn} below, means that not all elements of $\hn{n}$ for $n \ge 3$ arise from automorphisms of the directed graph underlying some folded automaton.

\subsection{Automorphisms of folded automata and permutations of the alphabet.}
Consider the de Bruijn graph $G(n,m)$. Any permutation $\rho$ of the set $X_{n}$ induces an automorphism, which we again denote by $\rho$, of $G(n,m)$ as follows. A vertex $a= a_1a_2\ldots a_{n}$ of the graph $G(n,m)$ is mapped to the vertex $b = (a_1)\rho (a_2)\rho \ldots (a_{n})\rho:= (a)\rho$. An edge $e = (a,x,b)$ is mapped to the edge $((a)\rho, (x)\rho, (b)\rho)$. In this case, the transducer $H:= H(G(n,m), \rho)$ arising from the pair $(G(n,m), \rho)$ has the property that for any state $q \in Q_{H}$, the bijection  $\lambda_{H}(\centerdot, q): X_{n} \to X_{n}$ is the permutation $\rho$. Therefore, the minimal transducer $\overline{H}$ representing $H$ has exactly one state, and this state induces the permutation $\rho$ on the alphabet  $X_{n}$. We show below that these are the only automorphisms of the automaton $G(n,m)$.

Let $A$ be a folded automaton arising from a folding of $G(n,m)$ and let $\rho$, as above, be a permutation of $X_{n}$. By the definition of a folding the individual states of $A$ correspond to subsets of the vertices of $G(n,m)$ and the set of states of $A$ forms a partition of the vertices of $G(n,m)$. As vertices of $G(n,m)$ are words of length $m$ in $X_{n}$, we may define a map $\phi_{V_{A}}$ on the vertices of $G_{A}$ to the set of subsets of $X_{n}^m$, by mapping a vertex $q$ to the set $\{ (a)\rho \mid a \in X_{n}^{m} \cap q \}$. If the image of $\phi_{V_A}$ in the set of subsets of $X_n^m$ is again precisely the partition $V_{A}$, then we may define an edge map $\phi_{E_{A}}: E_{A} \to E_{A}$ by mapping an edge $(a, x, b)$ to the edge $((a)\rho, (x)\rho, (b)\rho)$ and this will be well defined for the folding $A$ by the definition of a folding. In this case, the map $(\phi_{V_A},\phi_{E_{A}})$ is an automorphism of $G_{A}$ which we once again denote by $\rho$.

The example below indicates that, in general, not all automorphisms of the directed graph underlying a folded automaton arise from a permutation of the symbol set.

\begin{figure}[htbp]
	\begin{center}
	\begin{tikzpicture}[shorten >= .5pt,node distance=3cm,on grid,auto] 
	\node[state] (q0)   {$q_0$};
	\node[state, xshift= -2cm, yshift=-2cm] (q1)   {$q_1$};
	\node[state, xshift=2cm, yshift=-2cm ] (q2) {$q_2$};
	\node[state, xshift=7cm] (p0) {$q_0$};
	\node[state, xshift= 5cm, yshift=-2cm] (p1)   {$q_1$};
	\node[state, xshift=9cm, yshift=-2cm ] (p2) {$q_2$};
	\node[xshift=0cm, yshift=-3.5cm]{$A$};
	\node[xshift=7cm, yshift=-3.5cm]{$G_A$};
	\path[->] 
	
	(q0) edge[out=180, in=90]  node[swap]{$1$} (q1)
	     edge[out=75, in=105, loop] node[swap] {$0$}() 
	     edge[out=360, in=90] node{$2$} (q2)
	     
	(q1) edge[in=190, out=80]  node[swap] {$0$} (q0)
	edge[out=165, in=195,loop] node[swap] {$1$} ()
	edge[in=190, out=350] node[swap] {$2$} (q2) 
	
	(q2) edge[in=360, out=180] node[swap] {$0$}   (q1)
	edge[in=345, out=15,loop] node[xshift=-0.1cm] {$2$}  ()
	edge[in=350, out=100] node {$1$}  (q0)
	
	(p0) edge[out=180, in=90] (p1)
	edge[out=75, in=105, loop] () 
	edge[out=360, in=90] (p2)
	
	(p1) edge[in=190, out=80] (p0)
	edge[out=165, in=195,loop] ()
	edge[in=190, out=350] (p2) 
	
	(p2) edge[in=360, out=180] (p1)
	edge[in=345, out=15,loop] ()
	edge[in=350, out=100] (p0);
  \end{tikzpicture}
\end{center}
\caption{A folded automaton with an automorphism not induced by a permutation.\label{fig-nonpermaut}}
\end{figure}
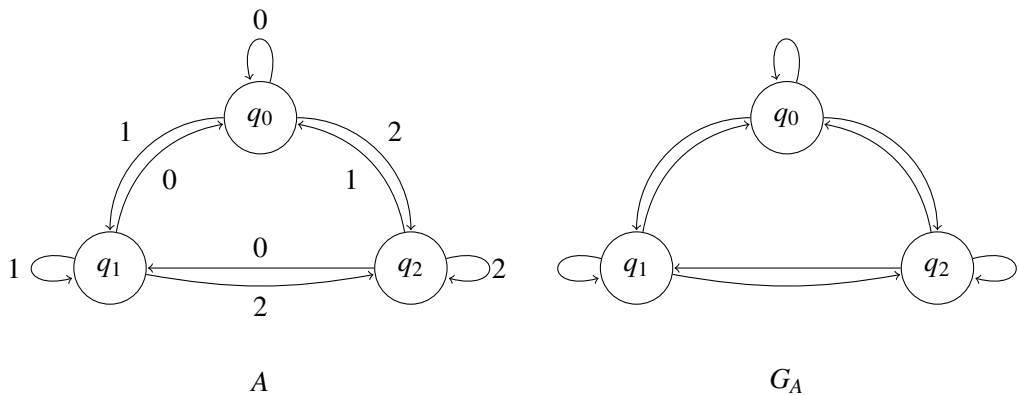

The automorphism group of the  underlying directed graph of the automaton $A$ in Figure~\ref{fig-nonpermaut} is the group $S_{3}$ as all three vertices may be permuted and any permutation of the three vertices forces a bijection on the edges. The automaton $A$ is a folded automaton arising from a folding of $G(3,2)$; the vertex $q_0$ corresponds to the set $\{00,21,10\}$, the vertex $q_1$ corresponds to the set $\{01,11,20\}$ and the vertex $q_2$ to the set $\{02,12,22\}$. The automorphism $\phi$ which swaps the vertex $q_0$ with $q_2$ but fixes the vertex $q_1$ is not induced by a permutation of the set $X_3$. (If $\phi$ were induced by a permutation $\rho$ of $X_n$, then $\{(00)\rho, (21)\rho,(10)\rho \} = \{02,12,22\}$ and $ \{(01)\rho,(11)\rho,(20)\rho\}=\{01,11,20\}$, which is not possible.)

The result below characterizes when an automorphism of the directed graph of a folded automaton is induced by a permutation of the alphabet set $X_{n}$.

\begin{prop}\label{prop:permaut}
	Let $A$ be a folded automaton arising from a folding of $G(n,m)$. An automorphism $\phi$ of the graph $G_A$ arises from a permutation $\rho$ of the set $X_{n}$ if and only if the minimal representative  of the transducer $H:=H(A,\phi)$ has exactly one state, and this state induces the permutation $\rho$ on $X_{n}$.
\end{prop}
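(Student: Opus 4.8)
The plan is to reduce the biconditional to an analysis of the single-letter outputs $\lambda_{H(A,\phi)}(x,p)$ of the transducer $H:=H(A,\phi)$, using the synchronous $\omega$-equivalence criterion recorded earlier (two states of a synchronous transducer are $\omega$-equivalent exactly when they have equal outputs on every finite input word). Throughout I identify each state $q$ of $A$ with its forcing set $S_q=\{a\in X_n^m:\mathfrak{s}_m(a)=q\}$, noting that the sets $\{S_q\}$ partition $X_n^m$ because $A$ is a folding of $G(n,m)$.

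For the forward direction, suppose $\phi$ arises from a permutation $\rho$. By the construction of the $\rho$-induced automorphism, the edge of $G_A$ leaving a state $p$ with input label $x$ is carried by $\phi$ to the edge leaving $(p)\phi$ with input label $(x)\rho$; since $\lambda_{H(A,\phi)}(x,p)$ is by definition the label of that image edge, we obtain $\lambda_{H(A,\phi)}(x,p)=(x)\rho$ \emph{independently of} $p$. As $H$ is synchronous, the output on any word $a$ from any state is then the letterwise image $(a)\rho$, again independent of the starting state. Hence any two states of $H$ are $\omega$-equivalent, so its weakly minimal representative has a single state, and that state induces the map $a\mapsto(a)\rho$, i.e. the permutation $\rho$ on $X_n$.

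Conversely, suppose the minimal representative of $H$ has one state inducing $\rho$. Then every state of $H$ induces $a\mapsto(a)\rho$, and evaluating on single letters gives $\lambda_{H(A,\phi)}(x,p)=(x)\rho$ for all $p$ and $x$. I now recover the vertex action of $\phi$. Fix $p$ and a word $a=a_1\cdots a_m\in S_p$. Reading $a$ from an arbitrary state $p_0$ traces a path ending at $p$ (as $a$ forces $p$); applying $\phi$ edge-by-edge and using $\lambda_{H(A,\phi)}(x,\cdot)=(x)\rho$, this path is carried to a path leaving $(p_0)\phi$ that reads $(a)\rho$ and ends at $(p)\phi$. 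Since $p_0$ ranges over all states and $\phi$ permutes states bijectively, $(a)\rho$ forces $(p)\phi$, so $(a)\rho\in S_{(p)\phi}$; thus $\{(a)\rho:a\in S_p\}\subseteq S_{(p)\phi}$ for every $p$. Because $\rho$ acts letterwise as a bijection of $X_n^m$ and $\phi$ permutes the parts of the partition $\{S_q\}$, a cardinality count over the whole partition forces each inclusion to be an equality, giving $S_{(p)\phi}=\{(a)\rho:a\in S_p\}$. This is exactly the assertion that $\{S_q\}$ is invariant under the letterwise action of $\rho$, so $\rho$ induces a graph automorphism $\phi_\rho$ of $G_A$ whose vertex action agrees with that of $\phi$; agreement on edges is then automatic from the determinism of $A$, since $\phi$ sends the edge $(p,x,q)$ to the unique edge out of $(p)\phi=(p)\phi_\rho$ with label $(x)\rho$, which is precisely the image under $\phi_\rho$. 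Therefore $\phi=\phi_\rho$ arises from $\rho$.

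I expect the converse to be the crux. The forward direction is a direct unwinding of the definition of $H(A,\phi)$, whereas the converse requires recovering the combinatorial action of $\phi$ on $G_A$ from purely dynamical data (the induced map on $\xnn$), and in particular establishing the well-definedness of $\phi_\rho$, i.e. the $\rho$-invariance of the forcing partition. It is here that the synchronizing structure, together with the passage from the inclusion $\{(a)\rho:a\in S_p\}\subseteq S_{(p)\phi}$ to equality via counting, does the real work.
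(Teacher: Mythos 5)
Your proof is correct, and its overall skeleton matches the paper's: the forward direction is the same unwinding of the definition of $H(A,\phi)$, and the converse in both cases comes down to showing that the forcing sets satisfy $W_{m,(p)\phi}=\{(a)\rho : a\in W_{m,p}\}$, whence $\phi$ is the automorphism induced by $\rho$. Where you genuinely diverge is in how that identity is established. The paper gets it in one step from machinery set up earlier in the section: the (stated, not proved) observation that $\{\lambda_{H(A,\phi)}(a,p) : a\in W_{m,q},\ p\in Q_{H(A,\phi)}\}=W_{m,(q)\phi}$, combined with the identification of the underlying automaton of $H(A,\phi)^{-1}$ with $A$ via $q^{-1}\mapsto (q)\phi$; since all states induce $\rho$, the forcing set of $q^{-1}$ is $\{(a)\rho : a\in W_{m,q}\}$ and the identity drops out. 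You never touch the inverse transducer: instead you trace the length-$m$ path reading $a\in S_p$ from an arbitrary state, push it through $\phi$ edge-by-edge to get a path reading $(a)\rho$ into $(p)\phi$, deduce (using determinism and that $\phi$ is a bijection on vertices) the inclusion $\{(a)\rho : a\in S_p\}\subseteq S_{(p)\phi}$, and then upgrade to equality by the partition/cardinality count. The trade-off: the paper's converse is a two-line deduction given its earlier observations, while yours is longer but self-contained --- in effect you supply a proof of the relevant case of the paper's unproven observation (3) rather than citing it. Your closing step, recovering edge agreement from vertex agreement via determinism, is also made explicit where the paper leaves it implicit.
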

\begin{proof}
	If the automorphism $\phi$ arises from a permutation $\rho$ of $X_{n}$, then as in the $G(n,m)$ case, all state of the transducer $H$ induce the permutation $\rho$ on the set $X_{n}$. Therefore, the minimal representative $\overline{H}$ of $H$ has exactly one state, and this state induces the permutation $\rho$ on $X_{n}$.
	
	Therefore, suppose that the minimal representative $\overline{H}$ of  the transducer $H = H({A}, \phi)$ has exactly one state, and this state induces the permutation $\rho$ on $X_{n}$. It must be the case that all states of $H$ induce the permutation $\rho$ on $X_{n}$. It follows that for an edge $e = (p,x,q)$ of $G_{A}$, $(e)\phi = ((p)\phi, (x)\rho, (q)\phi)$. Let $q$ be state of $H$, then as, by definition, $q$ is a state of $A$ $q$ corresponds to a subset of $X_{n}^{m}$. In particular, $q$ corresponds to the subset $W_{m,q}$ of $X_{n}^{m}$ consisting of all elements of $X_{n}^{m}$ which force the state $q$ when read from any state of $A$. Now as all states of $H$ induce the permutation $\rho$ on $X_{n}$, it follows that the state $q^{-1}$ of the automaton $H^{-1}$ corresponds to the subset $\{ (a)\rho \mid a \in W_{m,q}\}$. Therefore as $(q)\phi = q^{-1}$, we see that $\phi$ must arise from the permutation $\rho$. \qed
\end{proof}

Returning to the automaton $A$ in Figure~\ref{fig-nonpermaut}, the automorphism $\phi$ of $G_{A}$ which swaps the vertices $q_0$ and $q_1$ yields the automaton $(A)\phi$ and the transducer $H(A,\phi)$ depicted in Figure~\ref{fig-transducernonpermaut}. The transducer $H(A,\phi)$ is minimal.

\begin{figure}[htbp]
	\begin{center}
		\begin{tikzpicture}[shorten >= .5pt,node distance=3cm,on grid,auto] 
		\node[state] (q0)   {$q_2$};
		\node[state, xshift= -2cm, yshift=-2cm] (q1)   {$q_1$};
		\node[state, xshift=2cm, yshift=-2cm ] (q2) {$q_0$};
		\node[state, xshift=7cm] (p0) {$q_0$};
		\node[state, xshift= 5cm, yshift=-2cm] (p1)   {$q_1$};
		\node[state, xshift=9cm, yshift=-2cm ] (p2) {$q_2$};
		\node[xshift=0cm, yshift=-3.5cm]{$A$};
		\node[xshift=7cm, yshift=-3.5cm]{$H(A, \phi)$};
		\path[->] 
		
		(q0) edge[out=180, in=90]  node[swap]{$1$} (q1)
		edge[out=75, in=105, loop] node[swap] {$0$}() 
		edge[out=360, in=90] node{$2$} (q2)
		
		(q1) edge[in=190, out=80]  node[swap] {$0$} (q0)
		edge[out=165, in=195,loop] node[swap] {$1$} ()
		edge[in=190, out=350] node[swap] {$2$} (q2) 
		
		(q2) edge[in=360, out=180] node[swap] {$0$}   (q1)
		edge[in=345, out=15,loop] node[xshift=-0.1cm] {$2$}  ()
		edge[in=350, out=100] node {$1$}  (q0)
		
		(p0) edge[out=180, in=90] node[swap]{$1|0$} (p1)
		edge[out=75, in=105, loop] node[swap] {$0|2$} () 
		edge[out=360, in=90] node{$2|1$} (p2)
		
		(p1) edge[in=190, out=80]  node[swap] {$0|2$} (p0)
		edge[out=140, in=170,loop] node[swap] {$1|1$} ()
		edge[in=190, out=350] node[swap] {$2|0$} (p2) 
		
		(p2) edge[in=360, out=180] node[swap] {$0|1$} (p1)
		edge[out=40, in=10,loop] node {$2|0$} ()
		edge[in=350, out=100] node {$1|2$}   (p0);
		\end{tikzpicture}
	\end{center}
	\caption{The transducer arising from the automorphism swapping vertices $q_0$ and $q_1$.\label{fig-transducernonpermaut}}
\end{figure}
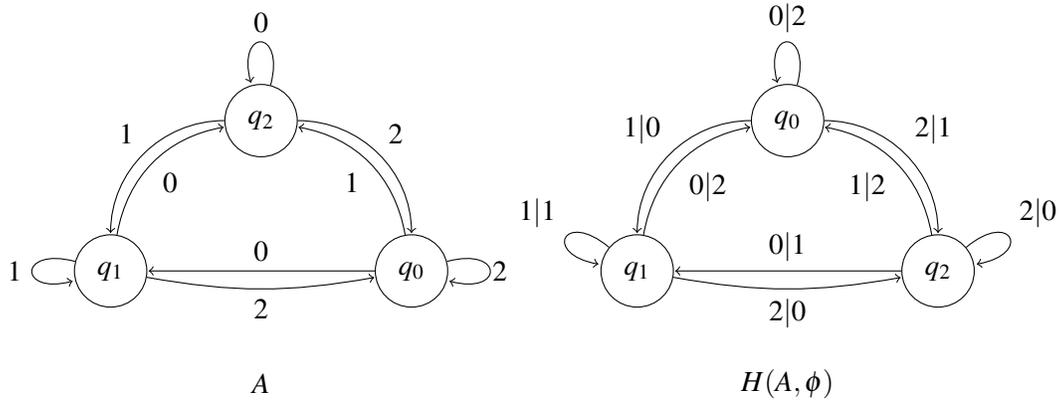

\begin{theorem}\label{thm:autfoldembedsinHn}
	Let $A$ be a  folded automaton arising from a folding of $G(n,m)$ for $m$ minimal. The map from the group $\aut{G_{A}}$ of automorphisms of the directed graph $G_{A}$ to $\hn{n}$ which maps an automorphism $\phi$  to the minimal representative of the transducer $H(A, \phi)$, is a monomorphism.
\end{theorem}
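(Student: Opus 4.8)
The plan is to verify the three defining properties of a group monomorphism: that the map is well-defined into $\hn{n}$, that it is a homomorphism, and that it is injective. Well-definedness into $\hn{n}$ is already essentially established by observation (2) in the discussion preceding Proposition~\ref{prop:permaut}: for a folding $A$ of $G(n,m)$ and an automorphism $\phi$ of $G_A$, both $H(A,\phi)$ and its inverse $H(A,\phi)^{-1}$ are synchronizing at level $k=m$, each state induces a bijection of $X_n$ (observation (1)), and hence the minimal representative $\overline{H(A,\phi)}$ is a core, synchronous, invertible, bi-synchronizing transducer, i.e.\ an element of $\hn{n}$. So the bulk of the work is the homomorphism and injectivity claims.

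For the homomorphism property, I would show that $H(A,\phi)\ast H(A,\psi)$ is $\omega$-equivalent to $H(A,\phi\psi)$. The natural way is to read off the output functions directly from the gluing description. Recall that $H(A,\phi)$ has state set $Q_A$ and that reading input $x$ from state $p$ outputs the label $y$ of the edge $((p,x,\pi_A(x,p)))\phi$. In the product $H(A,\phi)\ast H(A,\psi)$, the output of $H(A,\phi)$ becomes the input of $H(A,\psi)$. Since $\phi$ and $\psi$ are graph automorphisms, applying $\phi$ to the edge $(p,x,q)$ and then $\psi$ to its image yields exactly $(p,x,q)(\phi\psi)$; thus the composite output at the product state $(p,q)$ agrees with the output of $H(A,\phi\psi)$ at a single state, and the product transducer folds onto $H(A,\phi\psi)$ once we identify $\omega$-equivalent states. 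I would make this precise by checking that the diagonal-type map on states intertwines the transition and output functions, then invoke that we work modulo $\omega$-equivalence (with the understanding that composition in $\hn{n}$ is the product of transducers reduced to the weakly minimal representative, per Corollary~\ref{cor:FinftyisoPn} and Proposition~\ref{prop:homomorphismfromPntoFinfinitiy}).

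For injectivity, suppose $\overline{H(A,\phi)}$ is the identity element of $\hn{n}$, i.e.\ its minimal representative is the single-state identity transducer inducing $\id$ on $X_n$. Then every state of $H(A,\phi)$ induces the identity permutation on $X_n$, so for each edge $(p,x,q)$ of $G_A$ the image $((p,x,q))\phi$ has the same input label $x$. By observation (3) relating $W_{m,q}$ and $W_{m,(q)\phi}$, this forces $(q)\phi=q$ for every state $q$: the set of length-$m$ words forcing $q$ is preserved, and because $m$ is chosen minimal (so that distinct states are distinguished by distinct nonempty sets of forcing words), the vertex map $\phi_V$ is the identity, whence $\phi_E$ is the identity as well. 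I would spell out that the minimality of $m$ is exactly what guarantees the forcing-word sets $W_{m,q}$ are pairwise distinct and nonempty, so that a permutation of $Q_A$ fixing all of them setwise must be trivial.

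The main obstacle I expect is the injectivity step, specifically making rigorous the claim that a trivial induced transducer forces $\phi$ to be the identity \emph{automorphism} rather than merely acting trivially on labels. The subtlety is that an automorphism of the unlabeled graph $G_A$ need not a priori respect the labelled structure, yet the condition ``$\overline{H(A,\phi)}$ is trivial'' is a statement about labels. The bridge is precisely observation (3) together with the minimality of $m$: one must argue that the range automaton $A(H(A,\phi)^{-1})$ being isomorphic to $A$ via $q^{-1}\mapsto (q)\phi$ (observation (4)), combined with trivial output, pins down $(q)\phi=q$. Everything else is a routine but careful chase through the definitions of $\pi_{H(A,\phi)}$, $\lambda_{H(A,\phi)}$, and the product construction.
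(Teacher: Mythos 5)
Your proposal is correct and takes essentially the same route as the paper: injectivity comes from the forcing-word sets $W_{m,q}$ (the paper argues the contrapositive, treating separately the cases where $\phi$ moves a vertex and where it fixes all vertices but moves an edge), and the homomorphism property comes from the diagonal map $p\mapsto(p,(p)\phi)$ intertwining the transitions and outputs of $H(A,\phi\psi)$ and $H(A,\phi)\ast H(A,\psi)$. Two small points of precision: the paper additionally verifies that the \emph{core} of the product is exactly the set of diagonal states $(p,(p)\phi)$ --- identifying $\omega$-equivalent states alone would not dispose of off-diagonal states, which are in general not $\omega$-equivalent to diagonal ones --- and the pairwise disjointness and nonemptiness of the sets $W_{m,q}$ that your injectivity step relies on already follows from $A$ being core and synchronizing at level $m$, not from the minimality of $m$.
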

\begin{proof}
    If $|A| = 1$ then the result is a consequence of Proposition~\ref{prop:onestatecond}. Thus we may assume that $|A|>1$.
    
    Let $\phi$ be a non-trivial automorphism of $G_{A}$. Then as $\phi$ is not trivial, either it moves some state or fixes every state and move some edges. 
    
    Suppose firstly that $\phi$ moves some state. Let $p,q \in Q_A$ be distinct states that $(p)\phi = q$. Since, $A$ is a folding of $G(n,m)$, $p$ and $q$ correspond to distinct subsets  of $X_{n}^{m}$ consisting of all words $W_{m,p}$ and $W_{m,q}$ that force the states $p$ and $q$ respectively. Now, by an observation above, the state $p$ of $H(A,\phi)$ is such that $\lambda_{H(A,\phi)}(\centerdot,p)$ induces a bijection from $W_{m,p}$ to $W_{m,q}$. Therefore, we see that $H(A,\phi)$ is not the identity transducer.
    
    In the case that $\phi$ fixes every state and moves some edges, let $e = (p,x,q)$ be an edge move by $\phi$. Since $\phi$ fixes all vertices, there must be an edge $(p, y,q)$ from $p$ to $q$, for $x \ne y$ such that $((p,x,q))\phi = (p,y,q)$. In this case, we have that the state $p$ of $H(A, \phi)$ satisfies $\lambda_{A}(x,p) = y$. We once again conclude that $H(A,\phi)$ is not the identity transducer.
    
    Therefore, the only element of $\aut{G_{A}}$ that maps to the identity transducer, is the identity element. This means that it suffices to show that the map from $\aut{G_A} \to \hn{n}$ which sends an automorphism $\phi$ to the minimal representative of  $H(A, \phi)$ is a homomorphism to conclude that it is a monomorphism.
    
    Let $\phi, \psi$ be two automorphisms of $G_{A}$ and let $H(A,\phi)$ and $H(A,\psi)$ be the corresponding transducers. Notice that the trio $H(A,\phi)$, $H(A, \psi)$ and  $H(A, \phi\psi)$, all by definition, have state set $Q_{A}$. This should not cause confusion below, as whenever we write a pair $(p,q)$ $H(A,\phi) \ast H(A,\psi)$, the first coordinate corresponds to the state of $H(A, \phi)$ and the second to the state of $H(A,\psi)$ and for   a single state  $p \in Q_A$ it will be clear below which of the three transducers $H(A,\phi)$, $H(A, \psi)$ and  $H(A, \phi\psi)$ it is being regarded as a state of. On the other hand, the set $W_{m,q}$  for a state $q \in Q_{A}$, depends only on the automaton $A$. That is the set of words in $X_{n}^{m}$ which force the state $q$ in  $H(A,\phi)$,  $H(A, \psi)$ or $H(A, \phi\psi)$  are all equal to $W_{m,q}$. 
    
    A state $(p,q)$ of the product $H(A,\phi) \ast H(A,\psi)$ is a state of the core if and only if $\{ a \in X_{n}^{m} \mid a = \lambda_{H(A,\phi)}(b, q) \mbox{ for some } b \in W_{m,p}, q \in Q_{A}\} = W_{m,q}$. This is because, by an observation above,  $$\{ a \in X_{n}^{m} \mid a = \lambda_{H(A,\phi)}(b, q) \mbox{ for some } b \in W_{m,p}, q \in Q_{A}\} = W_{m, (p)\phi}$$ and this set depends only on $A$. Thus a state $(p,q)$ is a state of the $\core(H(A,\phi) \ast H(A,\psi))$ if and only if it is of the form $(p, (p)\phi)$.
    
    Let  $(p,x,q)$ be an edge of $G_{A}$, $((p)\phi, y, (q)\phi)$ be its image under $\phi$ and $((p)\phi\psi, z, (q)\phi\psi)$ its image under $\phi\psi$. This means that the state $p$ of $H(A,\phi)$ satisfies, $\lambda_{H(A,\phi)}(x,p) = y$ and $\pi_{H(A,\phi)}(x, p) = q$. The state $(p)\phi$ of $H(A,\psi)$ satisfies, $\lambda_{H(A,\psi)}(y, (p)\phi) = z$ and $\pi_{H(A,\psi)}(y, (p)\phi) = (q)\phi$. Thus $$\lambda_{H(A,\phi\psi)}(x, (p,(p)\phi)) = z$$ and $$\pi_{H(A,\phi\psi)}(x, (p,(p)\phi)) = (q, (q)\phi).$$ 
    
    The above calculation demonstrates that the  map from $H(A, \phi\psi)$ to $\core(H(A,\phi) \ast H(A,\psi))$ which sends a state $p$ of $H(A, \phi\psi)$ to the state $(p, (p)\phi)$ of $\core(H(A,\phi) \ast H(A,\psi))$ is an automaton isomorphism. This concludes the proof. \qed   
\end{proof}

\subsection{Finite subgroups of $\hn{n}$}
We observe that a converse of Theorem~\ref{thm:autfoldembedsinHn} is valid, namely, every finite subgroup of $\hn{n} \cong \aut{\xnn, \shift{n}}$ arises from the automorphism group of a folded de Bruijn graph. This follows from work in the paper \cite{BoyleFranksKitchens}, however  we give a proof below.



The other construction we require is the \emph{dual automaton} (see \cite{AkhaviKlimannLombardyMairessePicantin2012, NekrashevychSSG}). 

Let $T$ be a transducer over the alphabet $\xn$. Set $\dual{T} = \gen{Q_{T}, X_{n}, \dpi_{T}, \dlambda_{T}}$, that is the state set of $\dual{T}$ is the set $X_{n}$, the alphabet of $\dual{T}$ is the state set $Q_{T}$ of $T$, and the transition $\dpi_{T}$ and output functions $\dlambda_{T}$ are defined as follows. For $q \in Q_{T}$ and $x \in \xn$, $\dpi_{{T}}(q, x) = y$ and $\dlambda_{{T}}(q, x) = p$ if and only if $\pi_{T}(x,q) = p$ and $\lambda_{T}(x, q) = y$.

One can easily check the following lemma.

\begin{lemma}
    Let $T$ be a synchronous transducer over alphabet $X_n$.  For positive natural $m$, we have $(\dual{T})^{m}=\dual{T(m)}$.
\end{lemma} 
Note that to lighten our notation below, we may use the notation $\dual{T}_{m}$ for the transducer $\dual{T(m)}$.

Also observe that $\dual{T^{-1}}$ is obtained from $\dual{T}$ by `reversing the arrows'. That is if, $x,y \in \xn$, $q, p \in Q_{T}$ are such that $\dpi_{T}(q, x) = y$ and $\dlambda(q, x) =p$, then $\dpi_{T^{-1}}(q^{-1}, y) = x$ and  $\dlambda(q^{-1}, y) =p^{-1}$.



The proof we give below is more automata theoretic and is based on the following result from \cite{OlukoyaOrder}.

\begin{prop}\label{prop:dualzero}
    Let $G \le \hn{n}$ be a finite subgroup. Let $k \in  \N$ the largest minimal synchronizing level of any element of $G$. Then for any $H \in G$, and for any word $\Gamma \in \xn^{k}$, there is a word $W(\Gamma,H) \in Q_{H}^{+}$ such that for any word $P \in Q_{H}^{+}$, $\dpi_H(P,\Gamma) = W(\Gamma, H)^{i}\overline{W(\Gamma,H)}_{r}$, where, $i \in \N$, satisfies,  $|P| = i|W(\Gamma,H)| + r$, for $1 \le r < |W(\Gamma,H)|$ and $\overline{W(\Gamma,H)}_{r}$ is the length $r$ prefix of $W(\Gamma,H)$.
\end{prop}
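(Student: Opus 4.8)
The plan is to unwind the ``dual'' reading encoded in the notation $\lambda_H(\Gamma,P)$, reduce the proposition to the dynamics of a single self-map $\psi$ on the core states of $H$, and then use finiteness of $G$ to force that self-map to be a permutation, whence the output is purely periodic.

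First I would make the recursion explicit. Writing $P=p_1p_2\cdots p_m$, set $\Gamma_0=\Gamma$ and, for $1\le j\le m$, put $\Gamma_j=\lambda_H(\Gamma_{j-1},p_j)\in X_n^{k}$. Because $H$ is synchronizing at level $k$, reading the length-$k$ word $\Gamma_{j-1}$ from the state $p_j$ lands in the forced state $\pi_H(\Gamma_{j-1},p_j)=\mathfrak{s}_k(\Gamma_{j-1})$, which is independent of $p_j$; tracking the dual reading letter by letter then gives
\[\lambda_H(\Gamma,P)=\mathfrak{s}_k(\Gamma_0)\,\mathfrak{s}_k(\Gamma_1)\cdots\mathfrak{s}_k(\Gamma_{m-1}).\]
Thus the output is the word $q_0q_1\cdots q_{m-1}$ over $Q_H$, where $q_j:=\mathfrak{s}_k(\Gamma_j)\in\core(H)$, and the whole proposition reduces to showing that this sequence of core states depends only on $m$ and is purely periodic from the start.

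The crux, and the step I expect to be the main obstacle, is the claim that $\mathfrak{s}_k(\lambda_H(\Gamma,p))$ depends only on $\mathfrak{s}_k(\Gamma)$, and on neither $p$ nor the particular word $\Gamma$ realising that forced state. Granting this, there is a well-defined self-map $\psi\colon\core(H)\to\core(H)$ with $\mathfrak{s}_k(\lambda_H(\Gamma,p))=\psi(\mathfrak{s}_k(\Gamma))$, so that $q_j=\psi^{j}(q_0)$ and $\lambda_H(\Gamma,P)$ depends only on $m$ and on $q_0=\mathfrak{s}_k(\Gamma)$. One half of the claim is clean and comes from bi-synchronization: viewing $H$ as an ordered pair of automata on a common underlying graph, the domain path traced by reading $\Gamma$ from $p$ and the range path traced by reading the output $\lambda_H(\Gamma,p)$ from $p$ are literally the same path, and, since $H^{-1}\in G$ is also synchronizing at level $k$, that path's endpoint is the range-forced state of $\lambda_H(\Gamma,p)$; hence the \emph{range}-forced state of $\lambda_H(\Gamma,p)$ equals $\mathfrak{s}_k(\Gamma)$ for every $p$, and is in particular independent of $p$. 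The difficulty is that the recursion feeds $\Gamma_j$ back through the \emph{domain} of $H$, so what is actually needed is that the \emph{domain}-forced state $\mathfrak{s}_k(\lambda_H(\Gamma,p))$ is a function of $\mathfrak{s}_k(\Gamma)$ alone. Bi-synchronization alone does not give this — the easy halves for $H$ and for $H^{-1}$ are mutually inverse and do not pin down the domain-forced state of a single word — and this is exactly where finiteness of $G$ must enter: since $H^{-1}\in G$ and every element of $G$ synchronizes at the common level $k$, each single application of $H$ to $\Gamma$ can be completed (using $H^{d}=\id$, where $d$ is the order of $H$) to a length-$d$ run that returns $\Gamma$ to itself, and comparing these closed runs for varying states is what forces the domain-forced state to be determined by $\mathfrak{s}_k(\Gamma)$.

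Once $\psi$ is available the remaining steps are routine. Identifying the core states of the various elements of $G$ with classes of length-$k$ words (legitimate because all of $G$ synchronizes at the common level $k$), the assignment $H'\mapsto\psi_{H'}$ is multiplicative, $\psi_{H_1\ast H_2}=\psi_{H_2}\circ\psi_{H_1}$, as read off from $\lambda_{H_1\ast H_2}(\Gamma,\cdot)=\lambda_{H_2}(\lambda_{H_1}(\Gamma,\cdot),\cdot)$. Hence $\psi_H^{\,d}=\psi_{H^{d}}=\psi_{\id}=\id$, so $\psi=\psi_H$ is a permutation of the finite set $\core(H)$. Consequently the sequence $q_0,q_1,q_2,\ldots=q_0,\psi(q_0),\psi^{2}(q_0),\ldots$ is purely periodic; letting $e$ be the length of the $\psi$-cycle through $q_0$ and setting $W(\Gamma,H):=q_0\,\psi(q_0)\cdots\psi^{e-1}(q_0)\in Q_H^{+}$, the output $\lambda_H(\Gamma,P)$ is the length-$m$ prefix of the periodic word $W(\Gamma,H)^{\infty}$, which is precisely $W(\Gamma,H)^{i}\,\overline{W(\Gamma,H)}_{r}$ in the notation of the statement. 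Finally I would remark that finite order is essential here: for $n\ge 3$ the group $\hn{n}$ contains infinite-order elements, for which the analogue of $\psi$ is a non-invertible self-map and the forced-state sequence is only eventually periodic.
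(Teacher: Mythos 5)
Your unwinding of the dual notation is correct ($\lambda_H(\Gamma,P)=\mathfrak{s}_k(\Gamma_0)\mathfrak{s}_k(\Gamma_1)\cdots\mathfrak{s}_k(\Gamma_{m-1})$ with $\Gamma_0=\Gamma$, $\Gamma_j=\lambda_H(\Gamma_{j-1},p_j)$), and you have correctly isolated the crux; note for comparison that the paper itself gives no proof of this proposition --- it is imported from \cite{OlukoyaOrder} --- so your attempt has to be measured against what a complete argument requires. Measured that way, there is a genuine gap: the step on which everything rests is asserted, not proved. The sentence ``comparing these closed runs for varying states is what forces the domain-forced state to be determined by $\mathfrak{s}_k(\Gamma)$'' is a placeholder for the entire content of the proposition. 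Worse, the claim you park there is \emph{stronger} than what is needed or asserted: you ask for a map $\psi$ on $\core(H)$ with $\mathfrak{s}_k(\lambda_H(\Gamma,p))=\psi(\mathfrak{s}_k(\Gamma))$, i.e.\ constancy across \emph{different words} $\Gamma$ sharing a forced state, whereas the proposition only needs, for each \emph{fixed} $\Gamma$, that the sequence $(\mathfrak{s}_k(\Gamma_j))_{j}$ is independent of $P$; nothing you cite yields the stronger version, and it is not clear it holds before one already has the structure theorem this proposition is used to prove. The multiplicativity step inherits the problem: $\psi_{H_1}$ and $\psi_{H_2}$ act on different state sets, so $\psi_{H_2}\circ\psi_{H_1}$ does not typecheck until one has cross-element well-definedness statements, which are again instances of the missing claim.

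Here, concretely, is the machinery a correct proof needs and your proposal omits (writing $\mathfrak{s}^{T}_k$ for the level-$k$ synchronizing map of a transducer $T$). For $H',H''\in G$ and $\Gamma\in\xn^k$, the state $\mathfrak{s}^{H''}_k(\lambda_{H'}(\Gamma,u))$ is independent of $u\in Q_{H'}$: since $H'$ is core, every $u$ occurs as the first coordinate of some core state $(u,v)$ of $H'\ast H''$; reading $\Gamma$ from $(u,v)$ ends at the core state $\bigl(\mathfrak{s}^{H'}_k(\Gamma),\,\mathfrak{s}^{H''}_k(\lambda_{H'}(\Gamma,u))\bigr)$ (the second coordinate is forced because $H''$ synchronizes at level $k$ and $|\lambda_{H'}(\Gamma,u)|=k$, so $v$ is irrelevant); since the minimal representative of $\core(H'\ast H'')$ is the element $H'H''$ of $G$ and hence synchronizes at level $k$ --- this is precisely where finiteness of $G$ enters --- the end states obtained for different $u$ are $\omega$-equivalent in $\core(H'\ast H'')$; and because $\lambda_{H'}(\cdot,\mathfrak{s}^{H'}_k(\Gamma))$ is a bijection of $\xn^m$ for every $m$ ($H'$ is invertible) while $H''$ is weakly minimal, $\omega$-equivalence of two such states with equal first coordinates forces their second coordinates to be equal. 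Applying this with $H'=H^{j}$ and $H''=H$, together with a bootstrapping step that replaces the letters of an arbitrary (possibly non-core) state word $P$ one at a time from the right, shows $\mathfrak{s}_k(\Gamma_j)$ depends only on $\Gamma$ and $j$. Pure periodicity then follows with no appeal to $\psi$ being a permutation: if $d$ is the order of $H$, compute $\Gamma_d$ along a $P$ whose first $d$ letters form a core state of the $d$-fold product, so that $\Gamma_d=\lambda_{H^{d}}(\Gamma,\cdot)=\Gamma$ because $H^{d}=\id$, whence $\mathfrak{s}_k(\Gamma_{j+d})=\mathfrak{s}_k(\Gamma_j)$ for all $j$. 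Your instincts about where finiteness and $H^{d}=\id$ must enter are sound; what is missing is the product/core/$\omega$-equivalence/minimality argument that turns those instincts into a proof.
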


\begin{theorem}
 Let $G \le  \hn{n}$ be a finite subgroup, then $G$ is isomorphic to a subgroup of the automorphism group of the underlying digraph of a strongly synchronizing automaton $A(G)$. Moreover, every element of $G$ is the minimal representative of a transducer $H(A(G), \phi)$ for an automorphism $\phi$ of the underlying di-graph of $A(G)$.
\end{theorem}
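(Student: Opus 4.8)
The plan is to reverse the construction of Theorem~\ref{thm:autfoldembedsinHn}: instead of starting from a folded automaton and producing elements of $\hn{n}$, I want to manufacture out of the finite group $G$ itself a single folded de Bruijn graph whose digraph automorphisms recover all of $G$. The first step is to fix the level. Let $k$ be the largest minimal synchronizing level of an element of $G$, exactly as in Proposition~\ref{prop:dualzero}. Since $G$ is a group, every $H\in G$ together with its inverse $H^{-1}\in G$ is synchronizing at level $k$, so each $H$, viewed as an automaton on its input labels, is by Proposition~\ref{p:fold} a folding of $G(n,k)$. I write $A(H)$ for this folded domain automaton; it is recorded by the partition of $X_n^k$ into the classes $W_{k,q}$ of words forcing each state $q$ of $H$.

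Next I would define the automaton $A(G)$. Take the equivalence relation $\equiv$ on $X_n^k$ in which $u\equiv v$ precisely when $u$ and $v$ force the same state in $H$ for \emph{every} $H\in G$; equivalently, $\equiv$ is the common refinement (meet) of the foldings $A(H)$ over $H\in G$. A meet of foldings is again a folding, since foldings are exactly the congruences of the algebra structure discussed after Proposition~\ref{p:fold}, so $A(G):=G(n,k)/{\equiv}$ is core and synchronizing at level $k$ by that same proposition. This is the strongly synchronizing automaton $A(G)$ promised in the statement.

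The heart of the argument is to show that each $H\in G$ induces a digraph automorphism $\phi_H$ of $G_{A(G)}$ with $H=\overline{H(A(G),\phi_H)}$. On vertices I would let $\phi_H$ send the $\equiv$-class forced by a word $u$ to the class determined by the states into which $H$ carries $u$. The key point is that the range automaton of $H$ shares its underlying digraph with the domain automaton (each edge $q\to\pi_H(x,q)$ occurs regardless of labeling) and coincides, up to renaming states $q\mapsto q^{-1}$, with the domain automaton of $H^{-1}\in G$; consequently applying $H$ merely translates the indexing of the family $\{A(H'):H'\in G\}$ within $G$, and therefore fixes its meet $\equiv$. This closure $HG=G$ of the finite group is what guarantees that $\phi_H$ is a well-defined bijection of the states of the \emph{same} automaton $A(G)$, and Proposition~\ref{prop:dualzero} supplies the required uniformity: because the output read along any state-word depends only on its length and on the forcing word $\Gamma$, the induced map on $\equiv$-classes respects the de Bruijn adjacency and carries the correct output labels. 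Matching these labels against the definition of $H(A(G),\phi_H)$ and passing to the minimal representative then yields $H=\overline{H(A(G),\phi_H)}$.

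Finally I would check that $H\mapsto\phi_H$ is a monomorphism $G\to\aut{G_{A(G)}}$. It is injective because $\phi_H=\id$ forces $H(A(G),\phi_H)$ to relabel every edge by itself, so $H=\id$; and it is a homomorphism by the same state-chasing computation used in the proof of Theorem~\ref{thm:autfoldembedsinHn}, where $\core(H(A(G),\phi)\ast H(A(G),\psi))$ was identified with $H(A(G),\phi\psi)$. Hence $G$ is isomorphic to its image in $\aut{G_{A(G)}}$, and every element of $G$ is realised as $\overline{H(A(G),\phi_H)}$, as required. The step I expect to be the main obstacle is precisely the claim that $H$ \emph{descends} to a graph automorphism of $A(G)$: that $H$'s per-word action on $X_n^k$ is constant on $\equiv$-classes and sends classes to classes of the very same automaton, with the edge bijection intact. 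This is where finiteness of $G$ (closure under products and inverses) must be combined with the rigidity of Proposition~\ref{prop:dualzero}; without the group structure the range folding of a single $H$ need not agree with its domain folding, and the construction would fail to close up.
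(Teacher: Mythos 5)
Your construction of $A(G)$ is essentially the paper's: you take $k$ to be the largest minimal synchronizing level in $G$, put an equivalence relation on $X_n^k$, and quotient $G(n,k)$; the only difference is that you use the meet of the forcing partitions (for you, $u\equiv v$ iff $u$ and $v$ force the same state of every $H\in G$), whereas the paper uses equality of the words $W(\cdot,H)$ of Proposition~\ref{prop:dualzero} for every $H\in G$. The two relations turn out to coincide, though proving that is tantamount to the step discussed below. Your outline of the remaining stages (define $\phi_H$ by $[u]\mapsto[\lambda_H(u,p)]$, verify the homomorphism property by the computation of Theorem~\ref{thm:autfoldembedsinHn}, recover $H$ as the minimal representative of $H(A(G),\phi_H)$) also follows the paper. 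But the step you yourself flag as the main obstacle --- that $\phi_H$ is well defined on $\equiv$-classes --- is exactly the mathematical heart of the theorem, it occupies the bulk of the paper's proof, and your proposal does not prove it. Writing $s_I(w)$ for the state of $I\in G$ forced by a word $w$ of length at least $k$, what must be shown is: if $s_J(u)=s_J(v)$ for all $J\in G$, then for all $H,I\in G$ and all states $p,q$ of $H$ one has $s_I(\lambda_H(u,p))=s_I(\lambda_H(v,q))$.

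Your proposed mechanism for this --- that applying $H$ ``merely translates the indexing of the family $\{A(H'):H'\in G\}$ and therefore fixes its meet'' --- is not correct. The pullback of the partition $A(I)$ under $u\mapsto\lambda_H(u,p)$ is not a member of that family: knowing $s_{HI}(u)=s_{HI}(v)$ does not by itself give $s_I(\lambda_H(u,p))=s_I(\lambda_H(v,q))$, because the state of $\core(H\ast I)$ forced by $u$ is the pair $(s_H(u),\,s_I(\lambda_H(u,p)))$, and equality of forced states in the \emph{minimal representative} $HI$ only tells you the two pairs are $\omega$-equivalent, not equal. Nor does Proposition~\ref{prop:dualzero} supply the missing uniformity: it controls outputs along iterates of a \emph{single} element (the word $W(\Gamma,H)$ governs $\lambda_{H^j}(\Gamma,\cdot)$ for all $j$), and says nothing about mixed compositions $s_I(\lambda_H(\cdot,p))$ with $I\ne H$; indeed even the independence of $s_I(\lambda_H(u,p))$ from $p$ requires an argument when $I\ne H$. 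The missing ingredient is the fact that in $\core(H\ast I)$, $\omega$-equivalent states with equal first coordinates have equal second coordinates (this uses invertibility --- each $\lambda_H(\cdot,a)$ is a bijection of $X_n^m$ for every $m$ --- together with weak minimality of $I$); applying this with the two equalities $s_H(u)=s_H(v)$ and $s_{HI}(u)=s_{HI}(v)$, both available from your meet since $H, HI\in G$, yields the required conclusion for each $I$. The paper establishes exactly this cross-element uniformity by an induction over the products $HI^{m}\in G$ (this is where finiteness and the group structure of $G$ genuinely enter, since each $HI^m$ is then synchronizing at level $k$). Until an argument of this kind is supplied, your proof has a genuine gap at its central step.
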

\begin{proof}
 Let $k \in \N$ be the such that any element of $G$ has minimal synchronizing level at most $k$. Define an equivalence relation $\sim$ on $\xn^{k}$ as follows: $\Gamma \sim \Delta$ if and only if $W(\Gamma, H) = W(\Delta, H)$ for all $H \in G$.
 
 Observe that, for $\Gamma = a \gamma$ and $\Delta = d \delta$, for $a, d \in \xn$, in the same equivalence class, then for $x \in \xn$, $\gamma x$ and $\delta x$ are also in the same equivalence class. This is because for any $H \in G$ and any word $P \in Q_{H}^{+}$ we have, $\lambda_{H^{|P|}}(a\gamma,P) = \lambda_{H^{|P|}}(a\delta, P)$, and so $\lambda_{H^{|P|}}(a\gamma x, P) = \lambda_{H^{|P|}}(a\delta x, P) $. From this we deduce that $W(\gamma x, H) = W(\delta x, H)$. 
 
 Thus, writing $[\gamma]$ for the equivalence class  of an element $\gamma$ of $\xn^{k}$, we may form an automaton $A(G)$ with state set $\xn^{k}/\sim$, and transitions $\pi_{A(G)}(x, [\gamma]) = [\overline{\gamma} x]$ where $\overline{\gamma}$ is the length $|\gamma| -1$ suffix of $\gamma$. By the previous a paragraph the automaton $A(G)$ is well defined; by construction the automaton $A(G)$ is strongly synchronizing.
 
 We now show that $G$ acts by automorphisms on the underlying digraph  of  $A(G)$.
 
 We begin by proving  the following observation. Let $\gamma, \delta \in \xn^{k}$ belong to the same equivalence class, and let $H \in G$ be arbitrary. Then for any $p, q \in  Q_{H}$, the elements of the set $\{\lambda_{H}(\xi, t) \mid  (\xi,t) \in \{(\gamma,p), (\delta,q)\}\}$ belong to the same equivalence class. 
 
 First observe that by Proposition~\ref{prop:dualzero}, there is a word $W_{H}\in Q_{H}^{+}$ such that $W_H = W(\lambda_{H}(\xi, t),H)$ for all $(\xi,t) \in \{ (\gamma,p), (\delta,q)\}$. Since $\gamma$ and $\delta$ are in  the same equivalence class, let $s_0$ be the state of $H$ forced by both $\gamma$ and $\delta$. Let  $I \in G$, $I \ne H$ be arbitrary, we show that there is a word $W_{I} \in Q_{I}^{+}$ such that $W_I = W(\lambda_{H}(\xi, t),I)$ for all $\xi \in \{ \gamma, \delta\}$ and all $t \in \{p,q\}$.  We prove this inductively.
 
  Let us establish the base case. Observe that since $HI \in G$ and since $\gamma$ and $\delta$ are in the same equivalence class, there is a unique state, $s_{1}$ of $HI$ such that for any state $s \in Q_{HI}$, the state of $HI$ forced by $\lambda_{HI}(\gamma, s)$ and $\lambda_{HI}(\delta, s)$ are equal and are equal to $s_{1}$. Notice that $HI$ is the minimal representative of $\core (H \ast I)$. There are state $s, s' \in I$ such that $(p,s), (q,s')$ are states of $\core(H \ast I)$; let $t, t' \in Q_{I}$ be such that $\pi_{H \ast I}(\gamma,(p,s)) = (s_0, t) $ and $\pi_{H \ast I}(\delta, (q,s')) = (s_0, t')$. Since the state of $HI$ forced by $\gamma$ and $\delta$ is $s_1$, we have $(s_0, t)$ and $(s_0, t')$ are $\omega$-equivalent to the state $s_{1}$, and so  $t = t'$. Set $t_1 = t = t'$. Therefore we  have shown that the state of $I$ forced by $\lambda_{H}(\gamma, p)$ is equal to the state of $I$ forced by $\lambda_{H}(\delta, q)$ and  that state is $t_1$. 
  
  Inductively assume that there is an $m \in \N$ such that for any word $u \in Q_{I}^{+}$ of length $m$, $\pi_{I^{m}}(\lambda_{H}(\gamma, p), u) = \pi_{I^{m}}(\lambda_{H}(\delta, q), u) = t_1t_2\ldots t_{m} $. We now prove the inductive step.
  
  As before, $HI^{m+1}$ is an element of $G$ and, as  $\gamma$ and $\delta$ are in the same equivalence class, they both force the same state $s_{m+1}$ of $HI^{m+1}$. There are words $s, s' \in Q_{I}^{m+1}$ such that $ps$ and $qs'$ are states of $\core(H \ast \underbrace{I \ast I \ldots \ast I}_{\text{$m+1$ times}})$. Since $HI^{m+1}$ is the minimal representative of $\core(H \ast \underbrace{I \ast I \ldots \ast I}_{\text{$m+1$ times}} )$, it follows that, if $T_{m+1}, T'_{m+1} \in Q_{I}^{m+1}$ satisfy, $\pi_{H \ast I}(\gamma,ps) = s_0 T_{m+1} $ and $\pi_{H \ast I}(\delta, qs')) = s_0T'_{m+1}$, then $s_0T_{m+1}$ and $s_0T'_{m+1}$ are both $\omega$-equivalent to the state $s_{m+1}$ of $HI^{m+1}$. By the inductive assumption, we have that that the first $m$ letters  of $T_{m+1}$ and $T'_{m+1}$ coincide, the preceding sentence now implies that $T_{m+1} = T'_{m+1}$. Set $t_{m+1}$ to the final letter of $T_{m+1} = T'_{m+1}$. By Proposition~\ref{prop:dualzero} it now follows that for any word for any word $u \in Q_{I}^{+}$ of length $m+1$, $\pi_{I^{m+1}}(\lambda_{H}(\gamma, p), u) = \pi_{I^{m+1}}(\lambda_{H}(\delta, q), u) = t_1t_2\ldots t_{m}t_{m+1} $. We therefore conclude that there is a word $W_{I} \in Q_{I}^{+}$ such that $W_I = W(\lambda_{H}(\xi, t),I)$ for all $\xi \in \{ \gamma, \delta\}$ and all $t \in \{p,q\}$. 
  
  Since $I  \in G$, $I \ne H$, was chosen arbitrarily, it follows that  $\lambda_{H}(\gamma, p)$ and $\lambda_{H}(\delta, q)$ are  in the same equivalence class.
 
 Let $[\gamma]$ be a vertex of $A(G)$, let $\overline{\gamma}$ be the length $k-1$ suffix of $\gamma$ and let $x \in \xn$ be the label of the edge from $[\gamma]$ to $[\overline{\gamma} x]$. Let $H \in G$ be arbitrary  and let $y = \lambda_{H}(x, q_{\gamma})$, then by the preceding paragraph for any pair of states $p, q \in Q_{H}$ and any $I \in G$, $W(\lambda_{H}(\gamma,p)y, I) = W(\lambda_{H}(\gamma,q)y, I)$. From this it follows that setting $\mu, \nu$ to be the length $k-1$ suffices of $\lambda_{H}(\gamma,p)$  and $\lambda_{H}(\gamma,q)$ respectively, $[\mu y] = [\nu y]$. Now as there is a state $s$ of $H$ such that $\lambda_{H}(\overline{\gamma} x,s) = \mu y$, it follows, by the preceding paragraphs once more, that for any state $t \in Q_{H}$, $[\lambda_{H}(\overline{\gamma} x,t)] = [\mu y]$. Since $\mu$ is a length $k-1$ suffix of an element of $[\lambda_{H}(\gamma, p)]$, there is an edge labeled $y$ from $[\lambda_{H}(\gamma, p)]$ to $[\mu y]$. 
 
 For $H \in G$, define a map $\phi_{H}$ as follows. For a vertex $[\gamma]$, and edge labeled $x$ from $[\gamma]$ to $[\overline{\gamma}x]$ of the digraph $A(G)$ (where $\overline{\gamma}$ is the length $k-1$ suffix of $\gamma$) of $A(G)$, $([\gamma])\phi_{H} = [\lambda_{H}(\gamma, p)]$, $([\overline{\gamma}x) \phi_{H} = [\lambda_{H}(\overline{\gamma}x, p)]$, for some state $p \in Q_{H}$, and the edge $x$ maps to the edge labeled $\lambda_{H}(x, q_{\gamma})$ from the state $[\lambda_{H}(\gamma, p)]$ to the state $[\lambda_{H}(\overline{\gamma}x, p)]$. By the preceding a paragraphs this map is well defined. It is easily verified that for $H, I \in G$, $\phi_{HI} = \phi_{H} \phi_{I}$. Thus the map $H \mapsto \phi_H$ is an embedding of $G$  into the automorphism group of the underlying digraph of $A(G)$.    Moreover, it is not hard to see that the minimal representative of the tranducer $H(A(G), \phi_{H})$ is $H$. \qed 
 \end{proof}

In light of Theorem~\ref{thm:autfoldembedsinHn} above,   Theorem~3.8 of \cite{BoyleFranksKitchens} can be states as follows:

\begin{cor}\label{cor:characterizationofautfold}
		Let $A$ be a  folded automaton arising from a folding of $G(n,m)$ for $m$ minimal. For the group $\aut{G_{A}}$ of automorphisms of the directed graph $G_{A}$, one of the following holds:
		
		\begin{enumerate}[label=(\roman*)]
			\item $\aut{G_{A}}$ isomorphic to a subgroup of $\sym{\xn}$ that has a composition factor that cannot be embedded in $\sym{X_{n-1}}$. In this case all automorphisms of $G_{A}$ arise as permutations of the symbol set $\xn$.
				
		    \item All the composition factors of $\aut{G_{A}}$ are isomorphic to subgroups of $\sym{X_{n-1}}$. 
	 \end{enumerate}
\end{cor}

\begin{cor}
	Let $A$ be a folded automaton arising from a folding of $G(3,m)$ for some $m \in \N$. The group $\aut{G_{A}}$ is either $\sym{X_{3}}$ or a $2$-group.
\end{cor}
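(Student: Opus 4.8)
The plan is to specialise Corollary~\ref{cor:characterisationofautfold} to the case $n=3$ and then to sharpen each of its two alternatives. First I would record that $G_A$ is a finite directed graph (a folding of $G(3,m)$ has at most $3^m$ vertices), so $\aut{G_A}$ is a finite group; and since $\aut{G_A}$ depends only on $A$, one may take $m$ to be the minimal synchronizing level of $A$, which is exactly the hypothesis under which Corollary~\ref{cor:characterisationofautfold} applies. With $\sym{X_{n-1}}=\sym{X_2}\cong C_2$, that corollary then presents $\aut{G_A}$ as satisfying one of its two mutually exclusive conditions, and the whole argument reduces to analysing these.

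In alternative (ii) every composition factor of $\aut{G_A}$ embeds in $\sym{X_2}\cong C_2$. A composition factor is a nontrivial finite simple group, and the only such group admitting an embedding into $C_2$ is $C_2$ itself; hence every composition factor of $\aut{G_A}$ is isomorphic to $C_2$. A finite group all of whose composition factors are $C_2$ is precisely a $2$-group, so $\aut{G_A}$ is a $2$-group in this case, and this alternative is settled immediately.

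In alternative (i), $\aut{G_A}$ embeds in $\sym{X_3}=S_3$ and has a composition factor not embeddable in $\sym{X_2}\cong C_2$. Running through the subgroups $1,\,C_2,\,C_3,\,S_3$ of $S_3$, the only simple subquotient of order greater than $2$ is $C_3$, so the offending composition factor is $C_3$; equivalently $3\mid|\aut{G_A}|$, and since $\aut{G_A}\le S_3$ this forces $\aut{G_A}\in\{C_3,S_3\}$. It therefore remains to exclude the possibility $\aut{G_A}\cong C_3$, that is, to prove that in alternative (i) the group is in fact the full symmetric group $\sym{X_3}$. To attack this I would use the additional assertion of alternative (i) that every automorphism of $G_A$ is induced by a permutation of the alphabet $X_3$. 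Presenting the folding as a partition $\mathcal P$ of $X_3^{m}$ whose blocks are the states of $A$ (Proposition~\ref{p:fold}), a permutation $\rho\in\sym{X_3}$ induces an automorphism of $G_A$ exactly when its coordinatewise action on $X_3^{m}$ permutes the blocks of $\mathcal P$; thus $\aut{G_A}$ is identified with the setwise stabiliser of $\mathcal P$ in $S_3$. The target statement becomes: if this stabiliser contains the $3$-cycle $\rho=(0\,1\,2)$, then it also contains a transposition. The plan is to analyse the $\langle\rho\rangle$-orbit structure on $X_3^m$ together with the folding-compatibility condition $a\equiv a'\Rightarrow\pi_A(x,a)\equiv\pi_A(x,a')$ — recalling that $\pi_A$ on $G(3,m)$ only reads the suffix — and to try to manufacture, from the $3$-cycle preserving $\mathcal P$, a transposition that also preserves $\mathcal P$.

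I expect this final step to be the genuine obstacle, and it is the crux of the whole corollary. The reduction to $\aut{G_A}\in\{C_3,S_3\}$ is routine group theory, but the passage from $\{C_3,S_3\}$ down to $S_3$ cannot be purely group-theoretic: a general $\langle\rho\rangle$-invariant partition of $X_3^m$ is very far from being $S_3$-invariant, so the folding-compatibility (suffix) condition must be used in an essential way — for instance through the congruence it induces on the alphabet by matching successor states, and the interaction of that congruence with the cyclic relabelling. This is precisely the delicate point on which the conclusion ``$\sym{X_3}$ or a $2$-group'' rests, and I would expect the bulk of the work, and the possibility of hidden subtleties, to lie exactly here.
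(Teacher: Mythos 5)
Your reduction of the statement to Corollary~\ref{cor:characterisationofautfold} is sound as far as it goes: alternative (ii) yields a $2$-group, and alternative (i) yields $\aut{G_{A}}\in\{C_3,\sym{X_{3}}\}$, so everything hinges on excluding $C_3$. You leave that step open, and you rightly suspect it is where the difficulty lies --- but the step cannot be completed, because the claim you reduce it to (a folding partition of $X_3^m$ preserved by the $3$-cycle is preserved by some transposition) is false. Concretely, take $m=2$ and the partition of $X_3^2$ with blocks $\{00,10\}$, $\{20\}$, $\{11,21\}$, $\{01\}$, $\{02,22\}$, $\{12\}$. Every block has constant second letter, so equivalent vertices of $G(3,2)$ have literally identical transitions and the folding condition holds; the partition is preserved by $(0\,1\,2)$, which permutes the blocks in two $3$-cycles, but by no transposition (e.g.\ $(0\,1)$ sends $\{00,10\}$ to $\{01,11\}$, which is not a block). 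In the resulting six-state automaton $A$ (whose minimal synchronizing level is $2$, so the hypothesis of Corollary~\ref{cor:characterisationofautfold} is met), the blocks $\{00,10\}$, $\{11,21\}$, $\{02,22\}$ are exactly the vertices of $G_{A}$ carrying loops, and a loop-preserving graph automorphism fixing the vertex $\{00,10\}$ must fix its out-neighbours $\{01\}$ and $\{02,22\}$, and then every remaining vertex; hence $\aut{G_{A}}$ is exactly the copy of $C_3$ generated by the automorphism induced by $(0\,1\,2)$. This is a folding of $G(3,2)$ whose automorphism group is neither $\sym{X_{3}}$ nor a $2$-group.

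For the comparison you were asked to anticipate: the paper offers no proof of this corollary at all --- it is stated as if immediate from Corollary~\ref{cor:characterisationofautfold} --- and the silent jump there is precisely the one you refused to make, from ``subgroup of $\sym{X_{3}}$ with $C_3$ as a composition factor'' to ``equal to $\sym{X_{3}}$''. The example above shows that jump is invalid, so the statement as printed needs $C_3$ as a third alternative; your case analysis, stopping at $\{C_3,\sym{X_{3}}\}$, is the most that Corollary~\ref{cor:characterisationofautfold} can give. In short, your proposal is incomplete as a proof, but the missing step is not a repairable gap in your argument: it is a counterexample to the statement, and your instinct that the passage from $\{C_3,\sym{X_{3}}\}$ to $\sym{X_{3}}$ could not be purely group-theoretic and concealed the real subtlety was exactly right.
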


It is a result of Hedlund \cite{Hedlund69} that $\aut{X_{2}, \shift{2}}$ is isomorphic to the cyclic group of order $2$. Below we give a new proof of this result by identifying conditions on (non-minimal) strongly synchronizing transducers to have a  minimal representative in $\hn{n}$ with exactly one state. From this we also derive implications (via Proposition~\ref{prop:permaut}) for folded automata: more precisely we show that certain folded automata, including the graphs $G(n,m)$, admit only automorphisms arising from permutations of the symbol set $X_{n}$. 
\subsection{Synchronizing sequences}
We require an algorithm given in \cite{AutGnr} for detecting when an automaton is  strongly synchronizing. We state a version below.

Let $A = (X_{n}, Q_{A}, \pi_{A})$ be an automaton. Define an equivalence relation $\sim_{A}$ on the states of $A$ by $p \sim_{A} q$ if and only if the maps $\pi_{A}(\cdot, p): Q_{A} \to Q_{A}$ and $\pi_{A}(\cdot, q): Q_{A} \to Q_{A}$ are equal. For a state  $q \in Q_{A}$ let $\mathsf{q}$ represent the equivalence class of $q$ under $\sim_{A}$. Further set $\mathsf{Q}_{\mathsf{A}}:= \{ \mathsf{q} \mid q \in Q_{A}\}$ and let ${\pi}_{\mathsf{A}}: \mathsf{Q}_{\mathsf{A}} \to \mathsf{Q}_{\mathsf{A}}$ be defined by ${\pi}_{\mathsf{A}}(x, \mathsf{q}) = \mathsf{p}$ where $p = \pi_{A}(x,q)$. Observe that $\pi_{\mathsf{A}}$ is a well defined map. Define a new automaton  $\mathsf{A} = (X_{n}, \mathsf{Q}_{\mathsf{A}}, \pi_{\mathsf{A}})$ noting that $|\mathsf{Q}_{\mathsf{A}}| \le |Q_{A}|$ and $|\mathsf{Q}_{\mathsf{A}}| = |Q_{A}|$ implies that $A$ is isomorphic to $\mathsf{A}$.

Given an automaton $A$, let $A_{0}:=A, A_1, A_2, \ldots$ be the sequence of automata such that $A_{i} = \mathsf{A}_{i-1}$ for all $i \ge 1$. We call the sequence $(A_i)_{i \in \mathbb{N}}$ the \emph{synchronizing sequence of $A$}. We make a few observations. 

By definition each term in the synchronizing sequence is a folding of the automaton which precedes it, therefore there is a $j \in \mathbb{N}$ such that all the $A_{i}$ for $i \ge j$ are isomorphic to one another. By a simple induction argument, for each $i$, the states of $A_{i}$ corresponds to a partition of $Q_{A}$. We identify the states of $A_{i}$ with this partition. For two states  $q,p \in Q_{A}$ that belong to a state $P$ of $A_{i}$, $\pi_{A}(x,q)$ and $\pi_{A}(x,p)$ belong to the same state of $Q_{A}$ for all $x \in X_{n}$. We will use the language `two states of $A$ are identified at level $i$' if the two named states belong to the same element of $Q_{A_{i}}$. 

If the automaton $A$ is strongly synchronizing and core, then an easy induction argument shows that all the terms in its synchronizing sequence are core and  strongly synchronizing as well (since they are all foldings of $A$). For example if $A = G(n,m)$, then the first $m$ terms of the synchronizing sequence of $A$ are $(G(n,m), G(n,m-1), G(n,m-2),\ldots, G(n,1)$, after this all the terms in the sequence are the single state automaton on $X_{n}$.

The result below is from \cite{AutGnr}.

\begin{theorem}\label{thm:collapsingprocedure}
	Let  $A$ be an automaton and $A_{0}:=A, A_1, A_2, \ldots$ be the sequence of automata such that $A_{i} = \mathsf{A}_{i-1}$ for all $i > 1$. Then 
	\begin{enumerate}
		\item a pair of states $p,q \in Q_{A}$, belong to the same element $t \in Q_{A_i}$ if and only if for all words $a \in X_{n}^{i}$, $\pi_{A}(a,p) = \pi_{A}(a, q)$, and
		
		\item $A$ is strongly synchronizing if and only if there is a $j \in \mathbb{N}$ such that $|Q_{A_{j}}| = 1$. The minimal $j$ for which $|A_{j}| = 1$ is the minimal synchronizing level of $A$.
			\end{enumerate}
	
\end{theorem}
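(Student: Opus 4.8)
The plan is to prove statement (1) by induction on $i$ and then to read off statement (2) as an immediate consequence, after rephrasing ``synchronizing at level $k$'' purely in terms of $\pi_A$. Before the induction I would make explicit the bookkeeping already flagged in the paragraph preceding the theorem: the states of $A_i$ are identified with the blocks of a partition of $Q_A$, and under this identification the quotient transition satisfies $\pi_{A_i}(x,[p]_i)=[\pi_A(x,p)]_i$, where $[\cdot]_i$ denotes the level-$i$ block. This follows by a routine induction from $A_i=\mathsf{A}_{i-1}$ and the defining rule $\pi_{\mathsf{A}_{i-1}}(x,\mathsf{s})=\mathsf{t}$ with $t=\pi_{A_{i-1}}(x,s)$; the successive partitions nest because each relation $\sim_{A_{i-1}}$ merges blocks of the previous partition.

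For statement (1) the base case $i=0$ is trivial: $p,q$ lie in the same block of $Q_{A_0}=Q_A$ iff $p=q$, which matches the condition over $X_n^0=\{\varepsilon\}$ since $\pi_A(\varepsilon,\cdot)$ is the identity. (Equivalently one may take $i=1$ as the base case, which is literally the definition of $\sim_A$.) For the inductive step, $p$ and $q$ are identified at level $i+1$ iff $[p]_i\sim_{A_i}[q]_i$, that is, iff $\pi_{A_i}(x,[p]_i)=\pi_{A_i}(x,[q]_i)$ for every $x\in X_n$. By the bookkeeping relation this says $[\pi_A(x,p)]_i=[\pi_A(x,q)]_i$, i.e.\ that $\pi_A(x,p)$ and $\pi_A(x,q)$ are identified at level $i$ for each $x$. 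Applying the inductive hypothesis to each such pair, and using the concatenation rule $\pi_A(xb,\cdot)=\pi_A(b,\pi_A(x,\cdot))$, this is precisely the assertion that $\pi_A(a,p)=\pi_A(a,q)$ for every $a=xb\in X_n^{i+1}$, completing the induction.

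For statement (2) I would first observe that $A$ is synchronizing at level $k$ if and only if $\pi_A(w,p)=\pi_A(w,q)$ for all $p,q\in Q_A$ and all $w\in X_n^k$: the forward implication is immediate from the synchronizing map $\mathfrak{s}_k$, and conversely, given that condition, setting $\mathfrak{s}_k(w):=\pi_A(w,q_0)$ for a fixed $q_0$ yields a map independent of the basepoint. By statement (1) applied with $i=k$, this condition holds exactly when all of $Q_A$ has collapsed to a single block at level $k$, i.e.\ when $|Q_{A_k}|=1$. Hence $A$ is strongly synchronizing iff $|Q_{A_j}|=1$ for some $j$; and since the equivalence ``synchronizing at level $k$ $\iff$ $|Q_{A_k}|=1$'' holds for each $k$, the least such $j$ is exactly the minimal synchronizing level.

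The main obstacle is the inductive step of (1), specifically ensuring that the quotient transition $\pi_{A_i}$ genuinely computes the projection of $\pi_A$ along the nested partitions, so that equality of $\pi_{A_i}(x,\cdot)$ on two level-$i$ blocks translates cleanly into the level-$i$ identification of the one-step images $\pi_A(x,p)$ and $\pi_A(x,q)$. Once this compatibility is established the remainder is formal, and statement (2) is then purely a matter of unwinding definitions.
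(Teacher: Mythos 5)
Your proof is correct. Be aware, though, that the paper contains no proof of this statement to compare against: it is imported wholesale from \cite{AutGnr} (``The result below is from...''), so the only internal material is the informal discussion preceding the theorem. Your argument --- establishing by induction that $Q_{A_i}$ is a nested partition of $Q_A$ satisfying $\pi_{A_i}(x,[p]_i)=[\pi_A(x,p)]_i$, proving (1) by induction on $i$ using the decomposition $\pi_A(xb,\cdot)=\pi_A(b,\pi_A(x,\cdot))$, and then obtaining (2) from (1) at $i=k$ via the observation that synchronization at level $k$ is exactly basepoint-independence of $\pi_A(w,\cdot)$ for $w\in X_n^k$ --- is sound at every step, and it is precisely the bookkeeping the paper's surrounding paragraphs take for granted when they identify the states of $A_i$ with blocks of a partition of $Q_A$ and speak of two states being ``identified at level $i$.''
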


\subsection{Applying synchronizing sequences to understand automorphisms of de Bruijn graphs}

\begin{lemma}\label{lem:letterspartitionstates}
	Let  $A$ be a core strongly synchronizing automaton, $A_0:= A, A_1, \ldots $ be its synchronizing sequence and $j \in \mathbb{N}$ be minimal such that $A_{j} = 1$. If $A_{j-1}$ is isomorphic as an automaton to $G(n,1)$ then the sets $Q_{A,x}:= \{ \pi_{A}(x, p) \mid p \in Q_{A}\}$ for $x \in X_{n}$ form a partition of the set $Q_{A}$ the states of $A$.
\end{lemma}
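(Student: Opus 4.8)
The plan is to read off the required partition directly from the level-$(j-1)$ automaton $A_{j-1}$, exploiting the hypothesis that it is $G(n,1)$.

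First I would invoke Theorem~\ref{thm:collapsingprocedure} to record that the state set $Q_{A_{j-1}}$ is a partition of $Q_A$, where two states $p,q \in Q_A$ lie in the same block precisely when $\pi_A(a,p) = \pi_A(a,q)$ for every $a \in X_n^{j-1}$, and that the induced transition is $\pi_{A_{j-1}}(x,[p]) = [\pi_A(x,p)]$, writing $[p]$ for the level-$(j-1)$ block of $p$. Since $A_{j-1} \cong G(n,1)$, the automaton $A_{j-1}$ has exactly $n$ states and, by the defining transitions of $G(n,1)$, reading any letter $x \in X_n$ from any state of $A_{j-1}$ lands in a single fixed state, which I will call $B_x$; moreover $B_0,\ldots,B_{n-1}$ are exactly the $n$ distinct states, so $\{B_x : x \in X_n\}$ is precisely the partition $Q_{A_{j-1}}$ of $Q_A$.

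Combining these two facts is the crux of the argument: for every $p \in Q_A$ and every $x \in X_n$ we have $[\pi_A(x,p)] = \pi_{A_{j-1}}(x,[p]) = B_x$, so $\pi_A(x,p)$ lies in the block $B_x$. Hence $Q_{A,x} = \{\pi_A(x,p) : p \in Q_A\} \subseteq B_x$. Because the blocks $B_x$ are pairwise disjoint (they form a partition of $Q_A$), the sets $Q_{A,x}$ are then automatically pairwise disjoint, and each is non-empty since $Q_A \neq \emptyset$.

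It remains to check that the $Q_{A,x}$ cover $Q_A$, and this is the one place where core-ness of $A$ enters. Since $A$ is core and synchronizing at some level $k$, every state $q$ is forced by a word $w = w_0\cdots w_{k-1} \in X_n^k$, so $q = \pi_A(w,p) = \pi_A(w_{k-1}, \pi_A(w_0\cdots w_{k-2}, p)) \in Q_{A,w_{k-1}}$ for any $p \in Q_A$; thus $\bigcup_{x \in X_n} Q_{A,x} = Q_A$. Together with the disjointness above, this shows $\{Q_{A,x}\}_{x \in X_n}$ is a partition of $Q_A$. I expect the only delicate point to be the bookkeeping that turns the abstract isomorphism $A_{j-1}\cong G(n,1)$ into the concrete containment $Q_{A,x}\subseteq B_x$; once that is secured, disjointness is immediate and covering follows from core-ness. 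In fact these same facts force $Q_{A,x}=B_x$, although the statement only asks for a partition.
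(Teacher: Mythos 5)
Your proof is correct and follows essentially the same route as the paper: both arguments rest on identifying the states of $A_{j-1}$ with a partition of $Q_A$ (via Theorem~\ref{thm:collapsingprocedure}) and on the fact that in $G(n,1)$ the target of a transition depends only on the letter read, with distinct letters giving distinct targets --- the paper phrases this as a contradiction (disjointness would fail), while you phrase it as the direct containment $Q_{A,x}\subseteq B_x$. Your explicit verification of covering via core-ness is a small bonus; the paper leaves that step implicit.
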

\begin{proof}
	This follows from the identification of the states of $A_{i}$ with partitions of states of $A$. For if there were distinct $x,y \in X_{n}$ and states $p_1, p_2 \in Q_{A}$ such that $\pi_{A}(x, p_1) = \pi_{A}(y,p_2)$, then the states $P_1$ and $P_2$ of $A_{j-1}$ containing $p_1$ and $p_2$ respectively satisfy, $\pi_{A_i}(x, P_1) = \pi_{A_{i}}(y, P_2)$. However, since $A_{j-1}$ is isomorphic as an automaton to $G(n,1)$ this is not possible ($A_{j-1}$ has $n$ distinct states, is synchronizing at level $1$ and core).\qed
\end{proof}

A consequence of Lemma~\ref{lem:letterspartitionstates} is the following result.

\begin{lemma}\label{lem:nomin}
	There is no minimal, core, invertible transducer $T$ which is bi-synchronizing at minimal level $(j,k)$ and satisfies the following: if $A$ and $B$ are the automata obtained from $T$ and $T^{-1}$ respectively by forgetting outputs, then the terms $A_{j-1}$ and $B_{k-1}$  in the synchronizing sequences of $A$ and $B$ are isomorphic to $G(n,1)$.
\end{lemma}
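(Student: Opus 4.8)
The plan is to argue by contradiction: assuming such a $T$ exists, I will show that its domain automaton is actually synchronizing at level $j-1$, contradicting the minimality of $j$. In fact only the hypothesis on $B$ will be used; the hypothesis on $A$ gives the symmetric contradiction (with $k$ in place of $j$), so either one suffices. First I would fix the two automata concretely. Write $A$ for the domain automaton of $T$ and $B$ for the domain automaton of $T^{-1}$. Since inversion of a synchronous transducer merely swaps input and output labels, $B$ is isomorphic (via $q^{-1}\leftrightarrow q$) to the range automaton of $T$; in particular $A$ and $B$ share the same underlying digraph, and each edge $e$ carries both an input label $\mathrm{in}(e)$ (its label in $A$) and an output label $\mathrm{out}(e)$ (its label in $B$). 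Because $T$ is minimal, core, invertible and bi-synchronizing at minimal level $(j,k)$, both $A$ and $B$ are core and strongly synchronizing, at minimal levels $j$ and $k$ respectively, so Lemma~\ref{lem:letterspartitionstates} applies to each. Applying it to $B$ (this is where $B_{k-1}\cong G(n,1)$ is used) shows that the sets $\{\pi_B(y,p):p\in Q_B\}$ partition $Q_B$; translated back to $T$, this says precisely that all edges entering a fixed state $q$ carry the same output label, which I denote $y(q)$.

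Next I would compute the block map induced by $T$ and observe that the output of a step depends only on its terminal state. For $x\in\xnz$ with $(x)f_T=y$, the defining formula $y_i=\lambda_T(x_i,\mathfrak{s}_j(x_{i-j}\ldots x_{i-1}))$ expresses $y_i$ as the output label of the edge read at the $i$-th step; that edge terminates at the state $q'=\mathfrak{s}_j(x_{i-j+1}\ldots x_i)$ forced by the length-$j$ window $x_{i-j+1}\ldots x_i$. By the previous paragraph this output label equals $y(q')$, which depends only on $q'$, hence only on $x_{i-j+1}\ldots x_i$ and not on $x_{i-j}$. Therefore there is a function $G\in F(X_n,j)$ with $y_i=G(x_{i-j+1}\ldots x_i)$ for all $i$, that is, $f_T=G_\infty$.

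Finally I would invoke uniqueness of weakly minimal representatives. Since $G\in F(X_n,j)$, the de Bruijn transducer $T_G$ with underlying graph $G(n,j-1)$ realises $f_T$ and is synchronizing at level $j-1$; by Proposition~\ref{p:fold} this property is preserved under weak minimisation, so the weakly minimal representative of $f_T$ is synchronizing at level at most $j-1$. But $T$ itself is weakly minimal and induces $f_T$, and by Corollary~\ref{cor:FinftyisoPn} the weakly minimal representative of a given map is unique, so $T$ is synchronizing at level at most $j-1$. This contradicts the minimality of $j$ via Theorem~\ref{thm:collapsingprocedure}(2). The degenerate case $j=1$ is covered as well: there $G(n,0)$ is a single state and the conclusion is that $T$ would be synchronizing at level $0$, i.e. a one-state transducer, again contradicting $A\cong A_0$ having more than one state.

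I expect the main obstacle to be the bookkeeping in the second paragraph: correctly identifying the terminal state $q'=\mathfrak{s}_j(x_{i-j+1}\ldots x_i)$ of the $i$-th edge and verifying that the partition produced by Lemma~\ref{lem:letterspartitionstates} on $B$ genuinely renders the output of that edge a function of $q'$ alone (so that the window can be shortened from $j+1$ to $j$ symbols). Once that reduction is pinned down rigorously, the rest is a direct appeal to the $F_\infty(X_n)\cong\spn{n}$ machinery.
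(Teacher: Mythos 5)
Your proof is correct, but it derives the contradiction by a genuinely different mechanism than the paper, even though both arguments pivot on the same key fact: Lemma~\ref{lem:letterspartitionstates} applied to $B$, which in transducer language says that all edges entering a fixed state of $T$ carry the same output label. The paper \emph{contradicts} that fact directly and locally: since $T$ is strongly synchronizing with more than one state, some pair of distinct states $p,q$ satisfies $\pi_T(\cdot,p)=\pi_T(\cdot,q)$, and weak minimality then forces $\lambda_T(x,p)\ne\lambda_T(x,q)$ for some letter $x$; reading this pair of edges in $T^{-1}$ produces two edges with distinct labels entering the common state $(\pi_T(x,p))^{-1}$ of $B$, violating the partition property --- three lines, no further machinery. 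You instead \emph{use} the partition property constructively: outputs factor through terminal states, so the sliding-block window shrinks from $j+1$ to $j$, and then uniqueness of weakly minimal representatives (the bijection between $F_\infty(X_n)$ and $\spn{n}$ behind Corollary~\ref{cor:FinftyisoPn}) forces $T$ itself to be synchronizing at level $j-1$, contradicting minimality of $j$. What the paper's route buys is brevity and independence from the Section~\ref{Section:transducers} machinery; what your route buys is that minimality of $T$ is quarantined to the final step, so your intermediate conclusion (any core transducer, minimal or not, whose inverse automaton satisfies the hypothesis induces a block map of window length $j$ rather than $j+1$) is strictly more information than the lemma asserts. Two points you should make explicit to be fully rigorous, both routine in the paper's framework: first, that $B$ is core (since $T$ is core and synchronizing, its underlying digraph is strongly connected, and $T^{-1}$ has the same underlying digraph), which Lemma~\ref{lem:letterspartitionstates} requires; second, that $\omega$-equal weakly minimal synchronous transducers have isomorphic underlying automata, so that ``synchronizing at level at most $j-1$'' genuinely transfers from the weakly minimal representative of $T_G$ to $T$ itself.
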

\begin{proof}
	Since $T$ is minimal and strongly synchronizing, there is a pair $p,q \in Q_{T}$ and $x \in X_{n}$ such that $\pi_{T}(x,p = \pi_{T}(x,q)$ but $y:=\lambda_{T}(x,p) \ne \lambda_{T}(x,q)=:z$. However, we therefore have that in $T^{-1}$, and so in $B$, $\pi_{T^{-1}}(y, p^{-1}) = \pi_{T}(z,q^{-1})$ with $z \ne q$. This contradicts Lemma~\ref{lem:letterspartitionstates}.\qed
\end{proof}

We note the lack of the minimality hypothesis in the statement of the proposition below. We require the non-minimality hypothesis in order to deduce results about  elements of $\hn{n}$ arising from automorphisms of de Bruijn graphs $G(n,m)$. In particular as a consequence of the following Proposition, we show that $\aut{G_{n,m}}$ is isomorphic to the symmetric group on $n$ symbols.

\begin{prop}\label{prop:onestatecond}
	Let $T$ be a core,  invertible bi-synchronizing transducer of size at least $2$ with automata theoretic inverse $T^{-1}$. Let $(j,k)$ be the minimal bi-synchronizing level of $T$, and let  $A$ and $B$ be the automata obtained from $T$ and $T^{-1}$ respectively by forgetting outputs. Suppose that the terms $A_{j-1}$ and $B_{k-1}$ in the synchronizing sequence $(A_{i})_{i \in \mathbb{N}}$ and $(B_{i})_{i \in \mathbb{N}}$ of $A$ and $B$ are  both isomorphic, as automata, to $G(n,1)$. Then $j=k$ and the minimal transducer representing $T$ has only one state.
\end{prop}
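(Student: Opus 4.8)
The plan is to distil from the two hypotheses a rigid labelling of the states of $T$, reduce the whole statement to the single assertion that $T$ acts by one fixed letter permutation, and then climb the synchronizing sequence of $A$ to establish that reduction.

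First I would extract the structure. Applying Lemma~\ref{lem:letterspartitionstates} to $A$ (legitimate since $A$ is core, strongly synchronizing and $A_{j-1}\cong G(n,1)$), the sets $Q_{A,x}=\{\pi_T(x,p):p\in Q_T\}$ partition $Q_T$, so every state $r$ carries a well-defined \emph{incoming input letter} $\nu(r)$, characterised by $\pi_T(x,p)=r\Rightarrow x=\nu(r)$. The edge of the range automaton $B$ entering $r$ carries the \emph{output} label $\lambda_T(x,p)$; applying the same lemma to $B$ (using $B_{k-1}\cong G(n,1)$) gives a well-defined \emph{incoming output letter} $\mu(r)$, characterised by $\pi_T(x,p)=r\Rightarrow\lambda_T(x,p)=\mu(r)$. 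Thus the input and output of each transition are determined by its target: $\nu(\pi_T(x,p))=x$ and $\lambda_T(x,p)=\mu(\pi_T(x,p))$. Since $T$ is invertible, $\rho_p:=\lambda_T(\cdot,p)$ is a bijection, so the $n$ out-edges of $p$ reach $n$ distinct states and $\rho_p(\nu(r))=\mu(r)$ for every out-neighbour $r$ of $p$.

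Next I would record the reduction: it suffices to prove that $\mu$ factors through $\nu$, i.e. $\nu(r)=\nu(r')\Rightarrow\mu(r)=\mu(r')$. This is equivalent to $\rho_p$ being one and the same permutation $g$ for every $p$, which makes $\lambda_T(w,p)$ equal to the letterwise image $(w)g$ independently of $p$; hence all states are $\omega$-equivalent and the minimal representative $\overline{T}$ has a single state, inducing $g$. Moreover $\mu=g\circ\nu$ forces $\pi_B(y,p)=\pi_T(g^{-1}(y),p)$, so $B$ is precisely $A$ with input letters relabelled by $g$ \emph{via the identity on states}; the synchronizing sequences of $A$ and $B$ then coincide under this relabelling, giving $j=k$. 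To reach the factorisation I would show, by induction on the level $i$ with $p\sim_{A_i}q$, that merged states are $\omega$-equivalent. For $i=1$ one has $\pi_T(x,p)=\pi_T(x,q)$ for all $x$, whence $\lambda_T(x,p)=\mu(\pi_T(x,p))=\lambda_T(x,q)$, and agreement of transitions propagates this to all words. For the inductive step, $p\sim_{A_{i+1}}q$ yields $\pi_T(x,p)\sim_{A_i}\pi_T(x,q)$ by Theorem~\ref{thm:collapsingprocedure}, hence $\pi_T(x,p)\sim_\omega\pi_T(x,q)$; writing $w=xw'$ the tails agree and only the first output letters $\lambda_T(x,p)=\mu(\pi_T(x,p))$ and $\lambda_T(x,q)=\mu(\pi_T(x,q))$ remain to be matched. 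Taking $i+1=j$ shows this last requirement is exactly the constancy of $\mu$ on a block $\nu^{-1}(a)$.

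The hard part will therefore be the crux isolated above: \emph{$\mu$ is constant on each block $\nu^{-1}(a)$}. The forward hypothesis $A_{j-1}\cong G(n,1)$ alone cannot supply this — it only records that these blocks collapse in one further step — and this is precisely where the hypothesis on $T^{-1}$ must enter. My plan is to suppose $r,r'\in\nu^{-1}(a)$ with $\mu(r)\ne\mu(r')$; then $r\sim_{A_{j-1}}r'$ means that reading any length-$(j-1)$ input from $r$ and from $r'$ lands on a common state, so the initial transducers $T_r$ and $T_{r'}$ produce outputs that agree after a bounded prefix. Feeding this into the causal reconstruction by $f_{T^{-1}}$ — which has memory $k$ and whose own penultimate term $B_{k-1}\cong G(n,1)$ collapses $\mu$-blocks in a single step — the two bounded-memory, no-future descriptions of the mutually inverse maps $f_T$ and $f_{T^{-1}}$ should prove incompatible with $\mu(r)\ne\mu(r')$, forcing $\mu(r)=\mu(r')$. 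I expect this coupling of the two one-sided synchronizing conditions through invertibility to be the main obstacle; it is the same phenomenon exploited by Lemma~\ref{lem:nomin} in the minimal case, the present non-minimal statement being exactly what survives after the $\omega$-collapses of the induction have been quotiented out.
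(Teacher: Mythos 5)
Your structural setup is correct and is, in substance, the same observation the paper starts from: applying Lemma~\ref{lem:letterspartitionstates} to $A$ and to $B$ gives well-defined maps $\nu$ and $\mu$ (input and output letter determined by the target state), the blocks of $A_{j-1}$ are exactly the sets $\nu^{-1}(a)$, and your reduction of the whole proposition (including $j=k$) to the ``crux'' --- $\mu$ is constant on each $\nu^{-1}(a)$, equivalently all states induce one fixed permutation --- is sound. The genuine gap is that the crux is never proved, and the mechanism you sketch for it does not work. Concretely: take $r,r'\in\nu^{-1}(a)$ with $\mu(r)\ne\mu(r')$, choose pasts $u,u'$ forcing states $p,p'$ with $\pi_T(a,p)=r$, $\pi_T(a,p')=r'$, and a common future $\zeta$. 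The images of $u.a\zeta$ and $u'.a\zeta$ under $f_T$ differ at coordinate $0$ (and possibly at coordinates $<j$) and of course on the negative coordinates, and $f_{T^{-1}}$ simply recovers the two \emph{different} pasts $u$ and $u'$; every window computation is consistent. Since $u\ne u'$ is forced (as $p\ne p'$), no ``incompatibility'' between the two bounded-memory descriptions ever materialises from a single such comparison: invertibility of the pair $(f_T,f_{T^{-1}})$, used once, is perfectly compatible with $\mu(r)\ne\mu(r')$. So the hard part you correctly isolated is exactly the part that is missing.

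What closes this gap in the paper is an induction on $|T|$, whose engine is precisely your own observation that $\mu$ is well defined: if $\pi_T(\cdot,p)=\pi_T(\cdot,q)$ then $\lambda_T(x,p)=\mu(\pi_T(x,p))=\mu(\pi_T(x,q))=\lambda_T(x,q)$, so states with equal transition functions have equal output functions (and, symmetrically, $p\sim q$ iff $p^{-1}\sim q^{-1}$ on the inverse side). One then quotients $T$ by this relation to obtain a transducer $\mathsf{T}$ whose underlying automata are $A_1$ and $B_1$; this quotient is again core, invertible, bi-synchronizing at minimal level $(j-1,k-1)$, and still satisfies the $G(n,1)$ hypothesis because $(A_1)_{j-2}=A_{j-1}$ and $(B_1)_{k-2}=B_{k-1}$. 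Minimality of $j$ guarantees the relation is non-trivial, so $|\mathsf{T}|<|T|$ and induction applies; since identifying $\omega$-equivalent states commutes with this quotient, the minimal representative of $\mathsf{T}$ is that of $T$, giving a single state and $j-1=k-1$. In other words, the hypotheses $A_{j-1}\cong G(n,1)\cong B_{k-1}$ must be re-used at every scale of the synchronizing sequences, not just once; your level-by-level induction can be repaired the same way (at level $i$, apply the $\mu$-well-definedness argument to the quotient transducer carried by $A_i$, whose inverse automaton is $B_i$), but some such recursion is indispensable, and without it the proposal does not prove the proposition.
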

\begin{proof}
	We proceed by induction on the number of states of $T$. 
	
	Note that as $j,k \ge 1$, it follows that the base case occurs when $|T| = n$. In this case, both  $A$ and $B$ are isomorphic to $G(n,1)$ and $j=k =1$. If all the states of $T$ induce the same permutation $\phi$ on the set $X_{n}$, then the minimal transducer representing $T$ has one state, and that state also induces the permutation $\rho$ on $X_{n}$. Therefore, suppose there are two states $p, q \in Q_{T}$ and $x \in X_n$ such that $t:=\lambda_{T}(x,p) \ne \lambda_{T}(x, q)=:z$. Since $\pi_{T}(x,p) = \pi_{T}(x,q)$, it follows that in $T^{-1}$, the state $p^{-1}, q^{-1}$ satisfy, $\pi_{T^{-1}}(t,p^{-1}) = \pi_{T^{-1}}(z, q^{-1})$. This yields the desired contradiction by Lemma~\ref{lem:letterspartitionstates}, since $B$ is isomorphic to $G(n,1)$.
	
	Now suppose the conclusion of the proposition holds for all transducers $T$ with $n \le |T| < m$ and which satisfy the hypothesis of the proposition.
	
	Let $T$ be a transducer with size $|T|=m$ satisfying the hypothesis. Let $(j,k)$ be the minimal bi-synchronizing level of $T$. Since $|T| >n$, it follows that both $j$ and $k$ are strictly greater than $1$. As, because $T$ is core, if $j$ or $k$ were $1$, $T$ or $T^{-1}$ would be a folding of $G(n,1)$ and so, $T$ and $T^{-1}$ would have size less than or equal to $n$. 
	
	Let $A$ and $B$ the automata obtained from $T$ and $T^{-1}$ respectively by forgetting outputs and let $(A_i)_{i \in \mathbb{N}}$ and $(B_i)_{i \in \mathbb{N}}$ be their respective  synchronizing sequences.

  Let $p,q \in Q_{T}$ be any pair of states satisfying $\pi_{T}(x,p) = \pi_{T}(x,q)$ for all $x \in X_{n}$. Then, by the argument given in the base case, we must also have $\lambda_{T}(x,p) = \lambda_{T}(x,q)$ for all $x \in X_{n}$, otherwise we obtain the contradiction that $T$ does not satisfy the hypothesis of the proposition. By the same argument, if $p^{-1},q^{-1} \in Q_{T^{-1}}$ are any pair of states satisfying $\pi_{T^{-1}}(x,p^{-1}) = \pi_{T^{-1}}(x,q^{-1})$ for all $x \in X_{n}$, then $\lambda_{T^{-1}}(x,p^{-1}) = \lambda_{T^{-1}}(x,q^{-1})$ for all $x \in X_{n}$ as well.
  
  Let $\sim$ be the equivalence relation on the states of $T$ given by $p \sim q$ if $\pi_{T}(x,p) = \pi_{T}(x,q)$ for all $x \in X_{n}$. By an abuse of notation we also use $\sim$ for the same equivalence relation on the states of $T^{-1}$. For $q \in Q_T$, let $\mathsf{q}$ be its equivalence class and let $\mathsf{Q}_{T}:= \{ \mathsf{q} \mid q \in Q_{T} \}$. Notice that, by the preceding paragraph, for states $p,q \in Q_T$, $p \sim q$ if and only if $\pi_{T}(\centerdot, p) = \pi_{T}(\centerdot, q)$ and $\lambda_{T}(\centerdot, p) = \lambda_{T}(\centerdot, q)$  if and only if $p^{-1}\sim q^{-1}$. Moreover, by  hypothesis, $\sim$ is not the trivial equivalence relation i.e its equivalence classes do not all consist of singleton sets and it also does not consist of one equivalence class.
  
   Form a new transducer $\mathsf{T}$ as follows. Let $Q_{\mathsf{T}}:= \mathsf{Q}_{T}$. Define the transition function $\pi_{\mathsf{T}}: X_n \times Q_{\mathsf{T}}\to Q_{\mathsf{T}}$ by $\pi_{\mathsf{T}}(x, \mathsf{q}) = \mathsf{p}$ where $p = \pi_{T}(x, q)$ for some $q \in \mathsf{q}$. The output function $\lambda_{\mathsf{T}}: X_n \times Q_{\mathsf{T}} \to X_n$ is defined by $\lambda_{\mathsf{T}}(x, \mathsf{q}) = \lambda_{T}(x,q)$ for some $q \in \mathsf{q}$. The preceding paragraph implies that $\mathsf{T}$ is well-defined. 
   
   Observe, that if $C$ is the automaton obtained from $\mathsf{T}$ by forgetting outputs and $D$ is the automaton obtained from $\mathsf{T}^{-1}$ by forgetting outputs, then $C$ is isomorphic to $A_1$ and $D$ is isomorphic to $B_1$, by definition of the synchronizing sequence. This means that the minimal bi-synchronizing level of $\mathsf{T}$ is $(j-1, k-1)$. Moreover, as $k-1$ and $j-1$ are at least $1$, in the synchronizing sequence of $C$ and $D$,  the terms $C_{k-2}$ and $D_{k-2}$ are isomorphic to $G(n,1)$. This means that  $\mathsf{T}$ satisfies the hypothesis of the proposition. Furthermore, as $\sim$ is not the trivial relation, we have $|\mathsf{T}|< |T|$. Thus, we conclude that the minimal transducer representing $\mathsf{T}$ has only one state and $j-1 = k-1$. However, by construction of $\mathsf{T}$, the minimal transducer representing $\mathsf{T}$ is also the minimal transducer representing $T$. This concludes the proof. \qed
\end{proof}

We have some corollaries of the result above.

\begin{cor}\label{cor:autfold}
	Let $A$ be a folded automaton arising from a folding of $G(n,m)$. If an element of the synchronizing sequence of $A$ is isomorphic to $G(n,1)$, then any automorphism of $G_{A}$ is induced by a permutation of the symbol set $X_{n}$.
\end{cor}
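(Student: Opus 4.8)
The plan is to reduce Corollary~\ref{cor:autfold} directly to Proposition~\ref{prop:onestatecond} via Proposition~\ref{prop:permaut}. Fix a folded automaton $A$ arising from a folding of $G(n,m)$, and suppose some term of its synchronizing sequence is isomorphic to $G(n,1)$. Let $\phi$ be an automorphism of $G_{A}$ and form the transducer $H := H(A,\phi)$. By Proposition~\ref{prop:permaut}, it suffices to show that the minimal representative $\overline{H}$ has exactly one state; then the single state induces a permutation $\rho$ of $X_{n}$, and $\phi$ arises from $\rho$. So the entire task is to verify that $H$ satisfies the hypotheses of Proposition~\ref{prop:onestatecond}, whose conclusion is precisely that the minimal representative has one state.

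First I would check that $H$ is a core, invertible, bi-synchronizing transducer. Invertibility and the fact that $H$ and $H^{-1}$ are both synchronizing at the same level $k$ as $A$ were established in the observations preceding Theorem~\ref{thm:autfoldembedsinHn} (points (1) and (2)); that $H$ is core follows because its underlying domain automaton is exactly $A$, which is core. The next and central point is the synchronizing-sequence condition: writing $A'$ and $B'$ for the domain and range automata of $H$ (i.e.\ the automata obtained from $H$ and $H^{-1}$ by forgetting outputs), I must confirm that the relevant terms $A'_{j-1}$ and $B'_{k-1}$ of their synchronizing sequences are isomorphic to $G(n,1)$. By observation (4) before Theorem~\ref{thm:autfoldembedsinHn}, $A'$ is literally $A$ and $B'$ is isomorphic to $A$ as an automaton. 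Since the synchronizing sequence is an invariant of the underlying automaton, the hypothesis that some term of the synchronizing sequence of $A$ is isomorphic to $G(n,1)$ transfers verbatim to both $A'$ and $B'$.

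The one place requiring care is matching the \emph{index} at which $G(n,1)$ appears with the bi-synchronizing levels demanded by Proposition~\ref{prop:onestatecond}: that proposition asks that $A'_{j-1}$ and $B'_{k-1}$ be isomorphic to $G(n,1)$, where $(j,k)$ is the \emph{minimal} bi-synchronizing level of $H$. By Theorem~\ref{thm:collapsingprocedure}(2), the minimal synchronizing level of $A$ is the least index at which the synchronizing sequence collapses to a single state, and $G(n,1)$ is necessarily the penultimate term in a synchronizing sequence that reaches $G(n,1)$ (since $G(n,1)$ folds in one further step to the one-state automaton). Thus if $G(n,1)$ occurs at all in the synchronizing sequence of $A$, it occurs exactly at index $j-1$, where $j$ is the minimal synchronizing level of $A$, and similarly $k-1$ for $B'$. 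I would spell out this indexing identification explicitly, as it is the only nontrivial bookkeeping step.

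I expect the main obstacle to be precisely this synchronizing-level bookkeeping: one must be sure that ``some term of the synchronizing sequence is $G(n,1)$'' forces that term to sit at the penultimate position, and that this penultimate position coincides with $j-1$ (respectively $k-1$) for the minimal bi-synchronizing level of $H$. Once that is pinned down, Proposition~\ref{prop:onestatecond} applies and gives $j=k$ together with a single-state minimal representative; Proposition~\ref{prop:permaut} then immediately yields that $\phi$ is induced by a permutation of $X_{n}$, completing the proof. A subsidiary point to address is the degenerate case $|A|=1$, where $A\cong G(n,1)$ is already the one-state automaton and every automorphism trivially arises from a permutation of $X_{n}$; Proposition~\ref{prop:onestatecond} requires size at least $2$, so this case should be handled separately (it is immediate).
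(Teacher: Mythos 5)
Your proposal is correct and follows essentially the same route as the paper's proof: form $H(A,\phi)$, use the fact that the domain and range automata of $H$ are both isomorphic to $A$ as automata to verify the hypotheses of Proposition~\ref{prop:onestatecond}, and conclude via Proposition~\ref{prop:permaut}. The indexing bookkeeping you spell out (that $G(n,1)$ can only occur at the penultimate position of the synchronizing sequence) and your degenerate case $|A|=1$ (which is in fact vacuous, since then no term of the synchronizing sequence can be isomorphic to $G(n,1)$ for $n\ge 2$) are harmless extra details that the paper leaves implicit.
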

\begin{proof}
	Let $\phi$ be any automorphism of $G_{A}$, and let $H:= H(A, \phi)$. Let $A(H)$ and $A(H^{-1})$ be the automata obtained from $H$ and $H^{-1}$ by forgetting outputs. Note that since $A(H)$ and $A(H^{-1})$ are isomorphic as automata to $A$, it follows that $H$ satisfies the hypothesis of Proposition~\ref{prop:onestatecond}. This means that the minimal representative of $H$ has exactly one state. Proposition~\ref{prop:permaut} now implies that $\phi$ is  induced by a permutation of the symbol set $X_{n}$.\qed
\end{proof}

\begin{cor}\label{cor:autdebruijn}
	Let $A$ be the de Bruijn automaton $G(n,m)$. Then $\aut{G_{A}}$ is isomorphic to the symmetric group on $n$ points.
\end{cor}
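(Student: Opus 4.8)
The plan is to read this off from Corollary~\ref{cor:autfold} together with the explicit ``diagonal'' construction of automorphisms from alphabet permutations. First I would record the shape of the synchronizing sequence of $A=G(n,m)$. As noted in the discussion following Theorem~\ref{thm:collapsingprocedure}, this sequence begins
\[G(n,m),\ G(n,m-1),\ \ldots,\ G(n,1),\]
and is constant on the single-state automaton thereafter; in particular its $(m-1)$-st term $A_{m-1}$ is isomorphic to $G(n,1)$. Thus the hypothesis of Corollary~\ref{cor:autfold} is satisfied, and every automorphism of $G_A$ is induced by a permutation of the symbol set $X_n$. This already gives surjectivity of the map I want to build.

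Next I would construct the reverse map. For each $\rho\in\sym{\xn}$, define $\phi_\rho$ on $G_A$ to act diagonally: a vertex $a_1\cdots a_m\in X_n^m$ is sent to $(a_1)\rho\cdots(a_m)\rho$, and an edge from $a_1\cdots a_m$ to $a_2\cdots a_m a_0$ with label $a_0$ is sent to the edge from $(a_1)\rho\cdots(a_m)\rho$ to $(a_2)\rho\cdots(a_m)\rho(a_0)\rho$ with label $(a_0)\rho$. Since the vertex set of $G(n,m)$ is all of $X_n^m$ and $\rho$ permutes $X_n^m$ bijectively, the trivial partition into singletons is preserved; so, exactly as in the construction preceding Proposition~\ref{prop:permaut}, $\phi_\rho$ is a genuine automorphism of $G_A$ (and by Proposition~\ref{prop:permaut} its transducer $H(A,\phi_\rho)$ has a one-state minimal representative inducing $\rho$). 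The assignment $\rho\mapsto\phi_\rho$ is plainly a homomorphism $\sym{\xn}\to\aut{G_A}$, since applying $\rho$ then $\rho'$ letterwise is applying $\rho\rho'$.

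Finally I would check bijectivity. Surjectivity is precisely the first paragraph. For injectivity, if $\rho\neq\rho'$ then $(a)\rho\neq(a)\rho'$ for some $a\in X_n$; evaluating $\phi_\rho$ and $\phi_{\rho'}$ on the constant vertex $a\,a\cdots a\in X_n^m$ yields distinct images, so $\phi_\rho\neq\phi_{\rho'}$. Hence $\rho\mapsto\phi_\rho$ is an isomorphism and $\aut{G_A}\cong\sym{\xn}$, as claimed.

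I do not expect a serious obstacle here: all the genuine work has been pushed into Corollary~\ref{cor:autfold} (and, behind it, Proposition~\ref{prop:onestatecond}), whose hypothesis we need only verify via the synchronizing sequence of $G(n,m)$. The one point deserving a moment's care is confirming that the diagonal action of a permutation respects the de Bruijn edge relation $a_1\cdots a_m\to a_2\cdots a_m a_0$ with label $a_0$; but this is immediate from the definition of $G(n,m)$, since $\rho$ applied letterwise carries the source, target, and label to the corresponding source, target, and label of another de Bruijn edge.
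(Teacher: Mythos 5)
Your proposal is correct and follows essentially the same route as the paper: verify the hypothesis of Corollary~\ref{cor:autfold} via the synchronizing sequence $G(n,m), G(n,m-1),\ldots,G(n,1)$ to show every automorphism comes from a permutation of $X_n$, and combine this with the previously noted fact that every permutation of $X_n$ induces an automorphism of $G(n,m)$. The only difference is that you spell out the homomorphism property and the injectivity check (via constant words), which the paper leaves implicit.
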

\begin{proof}
	$G(n,m)$ is clearly a folding of itself, thus Corollary~\ref{cor:autfold} implies that the automorphism group of its underlying directed graph is isomorphic to a subgroup of the symmetric group on $n$ points. However, we have seen above that any permutation of $X_n$ induces an automorphism of $G(n,m)$.\qed
\end{proof}

The corollaries of Proposition~\ref{prop:onestatecond} below require the following straight-forward lemma.

\begin{lemma}\label{lem:2letter}
	Let $A$ be any strongly synchronizing, core automaton over the $2$-letter alphabet. Let $(A_{i})_{i \in \mathbb{N}}$ be the synchronizing sequence of $A$. if $|A|>1$, then the minimal synchronizing level $k$ of $A$ is at least $1$ and $A_{k-1}$ is isomorphic to $G(2,1)$. 
\end{lemma}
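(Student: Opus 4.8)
The plan is to exploit the defining property of the synchronizing sequence together with coreness, and to observe that with only two letters almost no room is left for collapsing. First I would dispatch the claim $k\ge1$ immediately: since $|A|=|Q_{A_0}|>1$, the level-$0$ term of the sequence is not the single-state automaton, so the minimal level $k$ at which $|Q_{A_k}|=1$ satisfies $k\ge1$.

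For the main assertion, set $B:=A_{k-1}$. By the observations preceding Theorem~\ref{thm:collapsingprocedure}, $B$ is itself core and strongly synchronizing, being a folding of the core strongly synchronizing automaton $A$; and by minimality of $k$ we have $|Q_B|>1$ while $A_k=\mathsf{B}$ has a single state. The key step is to unpack what $|Q_{A_k}|=1$ means: by definition of the collapse $\mathsf{B}$, every pair of states of $B$ is $\sim_B$-equivalent, so reading a fixed letter from any state of $B$ lands in one and the same state. Writing $s_0$ and $s_1$ for the states reached on reading $0$ and $1$ respectively, each independent of the starting state, this says precisely that $B$ is synchronizing at level $1$, with synchronizing map $\mathfrak{s}_1(x)=s_x$.

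Now I would invoke coreness. The core of a level-$1$ synchronizing automaton is the image of $\mathfrak{s}_1$, namely $\{s_0,s_1\}$; since $B$ is core, $Q_B=\{s_0,s_1\}$. Because $|Q_B|>1$ we cannot have $s_0=s_1$, so $s_0\neq s_1$ and $|Q_B|=2$. Finally, the bijection $s_x\mapsto x$ sends the edge of $B$ reading $x$, which always lands in $s_x$, to the edge of $G(2,1)$ reading $x$, which always lands in vertex $x$; hence it is an automaton isomorphism $A_{k-1}=B\cong G(2,1)$.

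The only delicate point, and the place where the hypothesis $n=2$ is genuinely used, is this final counting step. For a general alphabet a level-$1$ synchronizing core automaton need only be a folding of $G(n,1)$, since one could have $s_x=s_y$ for distinct letters, so it need not equal $G(n,1)$. With two letters the single alternative $s_0=s_1$ is excluded by $|Q_B|>1$, forcing $B\cong G(2,1)$. Everything else is a direct transcription of the definitions of the synchronizing sequence and of the core, so I expect no real obstacle beyond keeping careful track of which term of the sequence is being collapsed at each stage.
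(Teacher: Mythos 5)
Your proof is correct and takes essentially the same route as the paper's: the paper likewise notes $k\ge1$, uses coreness of the terms of the synchronizing sequence, and concludes by asserting that the only core, level-$1$ synchronizing automaton over $X_2$ with more than one state is $G(2,1)$. Your argument simply fills in the justification of that last assertion (unpacking $\sim_{A_{k-1}}$ to get level-$1$ synchronization, using coreness to get $Q_B=\{s_0,s_1\}$, and exhibiting the isomorphism $s_x\mapsto x$), which the paper leaves implicit.
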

\begin{proof}
	If $|A|>1$ then it is not the single state automaton (which is the only automaton strongly synchronizing at level 0). Thus let $k \ge 1$ be the minimal synchronizing level of $A$. Now, since $A$ is core, it follows that the automaton $A_{k-1}$ is isomorphic to $G(2,1)$. This is because the only core, level 1 synchronizing automaton over the 2 letter alphabet is $G(2,1)$.\qed
\end{proof}

\begin{cor}
	Let $A$ be an folded automaton over the $2$ letter alphabet, then $\aut{G_A}$ is either trivial or the cyclic group of order $2$. Moreover any automorphism of $G_{A}$ is induced by a permutation of $X_{2}$.
\end{cor}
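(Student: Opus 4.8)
The plan is to argue by the size of $A$, since Lemma~\ref{lem:2letter}, which drives the whole argument, only speaks to the case $|A|>1$.

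First I would dispatch the degenerate case $|A|=1$ by hand. Here $A$ is the single-state automaton on $X_2$, so its underlying directed graph $G_A$ is a single vertex carrying two loops, one for the letter $0$ and one for the letter $1$. Any automorphism of $G_A$ must fix the unique vertex and either fix or interchange the two loops, so $\aut{G_A}\cong C_2$; the non-trivial element, which swaps the two loops, is precisely the automorphism induced by the transposition $(0\,1)\in\sym{X_2}$, while the identity is induced by the identity permutation. Thus both conclusions hold in this case.

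Next, assume $|A|>1$. As $A$ is a folded automaton coming from a folding of some $G(2,m)$, Proposition~\ref{p:fold} tells us it is core and strongly synchronizing, so Lemma~\ref{lem:2letter} applies and yields a minimal synchronizing level $k\ge1$ with $A_{k-1}\cong G(2,1)$. In particular a term of the synchronizing sequence of $A$ is isomorphic to $G(2,1)$, which is exactly the hypothesis of Corollary~\ref{cor:autfold}. Applying that corollary with $n=2$, every automorphism of $G_A$ is induced by a permutation of $X_2$.

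To finish, I would combine the two cases: in either case every element of $\aut{G_A}$ is induced by some permutation of $X_2$, and the identity permutation always induces the identity automorphism. Since $\sym{X_2}$ has only two elements, $\aut{G_A}$ can contain at most the identity and the automorphism induced by $(0\,1)$; hence it has order at most $2$ and is therefore trivial or isomorphic to $C_2$, with every automorphism induced by a permutation of $X_2$ as claimed. I expect no real obstacle here, since the statement is essentially the specialization of Corollary~\ref{cor:autfold} to the binary alphabet combined with $|\sym{X_2}|=2$; the only point demanding genuine (if elementary) attention is the base case $|A|=1$, which Lemma~\ref{lem:2letter} explicitly excludes and so must be verified directly.
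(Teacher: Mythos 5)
Your proof is correct and follows essentially the same route as the paper, whose own proof simply cites Lemma~\ref{lem:2letter} together with Corollary~\ref{cor:autfold} and the fact that $|\sym{X_2}|=2$. Your explicit treatment of the case $|A|=1$ (which Lemma~\ref{lem:2letter} excludes, so the paper's one-line argument technically skips it) is handled correctly and is a welcome bit of extra care.
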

\begin{proof}
	This is a direct consequence of Lemma~\ref{lem:2letter} and Corollary~\ref{cor:autfold}.\qed
\end{proof}

\begin{theorem}\label{cor:H2isC2}
	The group $\hn{2}$ is isomorphic to the cyclic group of order $2$.
\end{theorem}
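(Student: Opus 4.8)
The plan is to show that the minimal transducer representing any element of $\hn{2}$ has exactly one state; once this is established, the theorem follows from a trivial count of the one-state possibilities.

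First I would take an arbitrary element of $\hn{2}$ and let $T$ be its weakly minimal representative. By the description of $\hn{2}$ obtained earlier, $T$ is a core, synchronous, invertible, bi-synchronizing transducer over $X_2$. I would argue by contradiction: suppose $|T|\ge 2$. Let $(j,k)$ be the minimal bi-synchronizing level of $T$, and let $A$ and $B$ be the automata obtained from $T$ and $T^{-1}$ respectively by forgetting outputs. Since $T$ is invertible, its state set is in bijection with that of $T^{-1}$, so $A$ and $B$ each have $|T|\ge 2$ states and are both core and strongly synchronizing. Applying Lemma~\ref{lem:2letter} to each of $A$ and $B$ yields that $A_{j-1}$ and $B_{k-1}$ are both isomorphic to $G(2,1)$. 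This is precisely the hypothesis of Proposition~\ref{prop:onestatecond}, whose conclusion forces the minimal transducer representing $T$ to have exactly one state. As $T$ is already minimal, this gives $|T|=1$, contradicting $|T|\ge 2$.

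Having ruled out $|T|\ge 2$, every element of $\hn{2}$ is represented by a one-state transducer. A one-state synchronous invertible transducer over $X_2$ is completely determined by the permutation $\rho$ of $X_2$ it applies to each letter, and any such permutation gives a transducer that is trivially bi-synchronizing (at level $0$), hence an element of $\hn{2}$. There are exactly two permutations of $X_2$, the identity and the transposition $(0\,1)$, so $\hn{2}$ has precisely two elements and is therefore isomorphic to the cyclic group of order $2$.

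The substance of the argument is entirely packaged in Proposition~\ref{prop:onestatecond} and Lemma~\ref{lem:2letter}; once these are available the deduction is short. The only point requiring care is checking that the hypotheses of the proposition are genuinely met: that both $A$ and $B$ have more than one state (which uses invertibility of $T$ to transfer $|A|=|B|=|T|$), and that the relevant terms of their synchronizing sequences are $G(2,1)$ rather than some other level-$1$ automaton. The latter is exactly what Lemma~\ref{lem:2letter} guarantees, since $G(2,1)$ is the unique core automaton over a two-letter alphabet that is synchronizing at level $1$. This is the step where I would expect a subtle slip, so I would state the transfer $|A|=|B|=|T|$ and the uniqueness of $G(2,1)$ explicitly rather than leaving them implicit.
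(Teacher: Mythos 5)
Your proof is correct and takes essentially the same route as the paper: a contradiction argument that combines Lemma~\ref{lem:2letter} with Proposition~\ref{prop:onestatecond} to force every element of $\hn{2}$ to have a one-state minimal representative, followed by counting the one-state possibilities. Your write-up is in fact slightly more explicit than the paper's (verifying the hypothesis of the proposition for both $A$ and $B$, noting $|A|=|B|=|T|$ via invertibility, and spelling out the final count of permutations of $X_2$), but the substance is identical.
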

\begin{proof}
	Let $A$ be a minimal, core, bi-synchronizing transducer over the $2$ letter alphabet. Suppose for a contradiction that $|A| >1$. By Lemma~\ref{lem:2letter}, $A$ satisfies the hypothesis of Proposition~\ref{prop:onestatecond}. However, this yields a contradiction as the size of $A$ must then be $1$. 
	
	Thus, every element of $\hn{2}$ has exactly one state yielding the result. \qed
\end{proof}

\section{Decomposing elements of $\hn{n}$}\label{sec:decomposition}

In this section we give an algorithm for decomposing an arbitrary element of $\hn{n}$ as a product of elements arising from automorphisms of the directed graphs underlying the folded automata arising from foldings of $G(n,m)$. Our method can be thought of as an interpretation of the approach in  \cite{BoyleFranksKitchens} in the language of strongly synchronizing automata. However, we are able to simplify that approach a great deal. In particular, we show that an element $T \in \hn{n}$ of size $l$ for some $l \in \N$ can be written as a product of at most $l$ elements of $\hn{n}$ arising from automorphisms of directed graphs underlying foldings of the underlying automaton of $A^{-1}$. Note that an element $T \in \hn{n}$ of size $l$ is strongly synchronizing at level at most $l-1$, thus $f_{T}$ (by Remark~\ref{bijectionfromFinftytoPn}) corresponds to a map $f_{\infty}$ for some $f \in F(\xn, l)$. To decompose the element $f_{\infty}$ using the approach given in \cite{BoyleFranksKitchens}, one would first have to construct a graph (isomorphic to the underlying graph of some folded automaton) with at least $n^{l}$ vertices.

\subsection{Collapsing chains and amalgamation}
 We introduce some terminology. Let  $A$ and $B$ be automata. Then $B$ is said to belong to  \emph{a collapse chain} of $A$, if there is a sequence $$A = A_0, A_1, \ldots, A_{m} = B$$ where, for $i \ge 1$, $A_{i}$ is obtained from $A_{i-1}$  by identifying two states $p,q \in Q_{A_{i-1}}$ such that $\pi_{A_{i-1}}(\cdot, p) =\pi_{A_{i-1}}(\cdot, q)$. We stress  that each  term in the sequence  is obtained from the previous one by identifying exactly two states. Observe that if $A$ is strongly synchronizing and $B$ is an automaton which is in a collapse chain of $A$, then $B$ is synchronizing at the minimal synchronizing level of $A$. More generally, let $A$ and $B$ be automaton with $B$ in a collapse chain of $A$. Let $(A_i)_{i \in \N}$ and $(B_i)_{i \in \N}$ be the sychronizing sequences of $A$ and $B$ respectively, and suppose that $k,l \in \N$ are minimal such that $A_j = A_{k} $ for all $j \ge k$ and $B_{j} = B_{l}$ for all $j \ge l$, then $A_{k} = B_{l}$. This is a consequence of Theorem~\ref{thm:collapsingprocedure}.
 
 The following terminology, which is for the underlying graphs of an automaton, should be compared with the similarly named terminology in the paper \cite{BoyleFranksKitchens} (recall the direction of edges will be reversed in our context). Let $A$ and $B$ be automata. Then $G_{B}$ is called an \emph{amalgamation} of $G_{A}$ if there is a sequence $G_A= G_0, G_1, \ldots, G_m = G_{B}$ where, for $i \ge 1$, $G_{i}$ is obtained from $G_{i-1}$ by identifying two vertices $v_1$ and $v_2$ of $G_{i-1}$ having the property that for all vertices $v$ of $G_{i-1}$, if there are precisely $k$ outgoing edges from $v_1$ to $v$ (for some $1 \le k \le n$) then there are also precisely $k$ outgoing edges from $v_2$ to $v$. That is, we replace the vertices $v_1$ and $v_2$ with a single vertex $v_{1,2}$ and, for every vertex $v$ of $G_{i-1}$ if there are $k$ edges from $v_1$ to $v$ (and hence, from $v_2$ to $v$), then there are $k$ edges from $V_{1,2}$ to $v$ (and of course, we retain all other vertices and edges of $G_{i-1}$).  Also, if $v$ is a vertex of $G_{i-1}$ then there will be $t$ edges in $G_{i}$ from the vertex corresponding to $v$ to $v_{1,2}$ if the cardinality of the set of edges from $v$ to $v_1$ is $r$ while the cardinality of the set of edges from $v$ to $v_2$ is $s$, where $r+s=t$.  In particular, if there are $m$ loops based as $v_1$ and $m'$ loops based at  $v_2$ in $G_{i-1}$, there are exactly $m+m'$ loops based at $v_{1,2}$ in $G_{i}$.  In this context, the vertices $v_1$ and $v_2$ of $G_{i-1}$ are called \emph{amalgamable}.

  Let $T$ be an invertible transducer. Let $A$ and $B$ be the underlying automata of $T$ and $T^{-1}$ respectively. Let $(B_i)_{i \in \N}$ be the synchronizing sequence of $B$. Then, by definition of the inverse transducer, $G_{B_i}$ is an amalgamation of $G_{A}$ for all $i \in \N$. The condition  ``for two states $p^{-1}, q^{-1} \in Q_{T^{-1}}$, $\pi_{B}(\cdot, p^{-1})$ and $\pi_{B}(\cdot, q^{-1})$ are equal'' is equivalent to the condition ``the vertices $p$ and $q$ of $G_{A}$ are amalgamable''.  Further observe that for automata $A$ and $B$ with $B$ in a collapse chain of $A$, the underlying directed graph of one is an amalgamation of the other.

  \subsection{Description of the decomposition algorithm}
  Here we give a short description of the algorithm for decomposing an element $T$ of $\hn{n}$ as a product of torsion elements as described in Theorem \ref{Thm:decompositionIntro}.  The proof that our various steps can be carried out is given in full detail in Subsection \ref{ssec:decompProof}.  The algorithm allows the user some choices, so decomposition is not unique, but our upper bound on the decomposition length still holds.
  
  We conclude with an example decomposition  and statements of choices we made so the reader can verify by following the algorithm.
  
  \begin{enumerate}[label= \textbf{A\arabic*}]
  \item Let $T_0\in \hn{n}$. Let $A$ and $B$ be the underlying automata of $T_0$ and $T_0^{-1}$ respectively.
  \item If $T_0$ has only one state, then it represents a permutation, and so there is a finite order single state transducer that we can multiply against $T_0$ to produce the identity element (in this case, go to the final step of the algorithm with this finite order factor in hand).  Otherwise, proceed to the next step.
  \item Compute the synchronizing sequence $(B_i)_{i \in \N}$ for $B=B_0$.
  \item Compute the first step $A_1$ of the synchronizing sequence of $A=A_0$.
  \item Find a pair $(p,q)$ of distinct states of $A$ which belong to the same state of $A_1$.
  \item Find the non-identity permutation $\alpha$ of the output labels such that $\lambda(\cdot, q) \circ \alpha: \xn \to \xn$ is precisely $\lambda(\cdot, p): \xn \to \xn$. Determine the disjoint cycle decomposition of the permutation $\alpha$.
  \item There is a smallest index $i$ so that the state $[q]$ of the automaton $B_i$ has the following properties:
  \begin{itemize}
      \item The states $[q]$ and $[p]$ remain distinct states of $B_{i}$, and
      \item For all $x,y \in \xn$ belonging to the same disjoint cycle in the cycle decomposition of $\alpha$, the edges labelled $x$ and $y$ from $[q]$ are parallel edges.
    \end{itemize}
 Now determine the isomorphism $\tau_{\alpha}$ of $B_i$ which fixes all vertices and induces the permutation $\alpha$ on the edges leaving $[q]$.)
  \item Build the transducer $H(B_i,\tau_{\alpha})$.  This is a  factor of finite order in a product sequence that will eventually trivialize $T_0$.
  \item Compute the product $R=\core(T \ast H(B_i,\tau_{\alpha}))$.  This product has the same underlying graph as $T$ but is not minimal.  The states corresponding to $p$ and $q$ in this product are $\omega$-equivalent, and will be identified by minimizing the result $R$ to produce a new element $T_1$ with fewer states than $T_0$.  
  \item Repeat this process from the beginning, remembering the list of finite factors found so far.
  \item The transducer $T$ now factors as the product in reverse order of the inverses of the finite order factors found above.
  \end{enumerate}
  
We give an example. Consider the element $T:=H(A, \phi)$ from Figure~\ref{fig-transducernonpermaut}. Working through the  algorithm, with $p = q_1,$ and $q = q_0$ in the first instance, one obtains the following decomposition below (up to changing the final single state transducer; different choices for $p$ and $q$ in building the second factor result in different single-state third-factor transducers):

\begin{figure}[H]
	\begin{center}
		\begin{tikzpicture}[shorten >= .5pt,node distance=3cm,on grid,auto] 
		\node[xshift=-1.5cm, yshift=-1cm] (T) {$T = $};
		\node[state, xshift=6cm] (q0)   {$q_0$};
		\node[state, xshift= 6cm, yshift=-2cm] (q1)   {$q_1$};
		\node[state, xshift=3cm, yshift=0cm ] (p0) {$p_0$};
		\node[state, xshift=3cm,yshift=-2cm] (p1) {$p_1$};
		\node[state, xshift= 0cm, yshift=-1cm] (t)   {$t$};
		\node[xshift= 1.5cm, yshift=-1cm] {$\spnprod{n}$};
		\node[xshift= 4.5cm, yshift=-1cm] {$\spnprod{n}$};
		\path[->] 
		
		(t) edge[out=40, in=140,loop]  node[swap]{$0|2$, $1|1$, $2|0$} ()

		(p0) edge[out=75, in=105,loop]  node[swap] {$2|2$, $1|1$} ()
		edge[out=260, in=100] node[swap] {$0|0$} (p1) 
		
		(p1) edge[in=255, out=285,loop] node{$0|0$}   ()
		     edge[in=280, out=80] node[swap, yshift=-0.3cm] {$2|1$} node[swap,xshift=0cm, yshift=0.2cm]{$1|2$}  (p0)
		
		(q0) edge[out=75, in=105,loop] node[swap]{$0|2$} ()
		edge[out=280, in=80] node[yshift=-0.3cm] {$0|1$} node[xshift=0cm, yshift=0.2cm]{$1|0$} (q1)
		
		(q1) edge[in=255, out=285,loop]  node{$1|1$, $0|0$} ()
		     edge[out=100, in=260] node {$2|2$} (q0);
		\end{tikzpicture}
	\end{center}
	\caption{Decomposing an element of $\hn{3}$ as a product of involutions.\label{fig-decomposition}}
\end{figure}
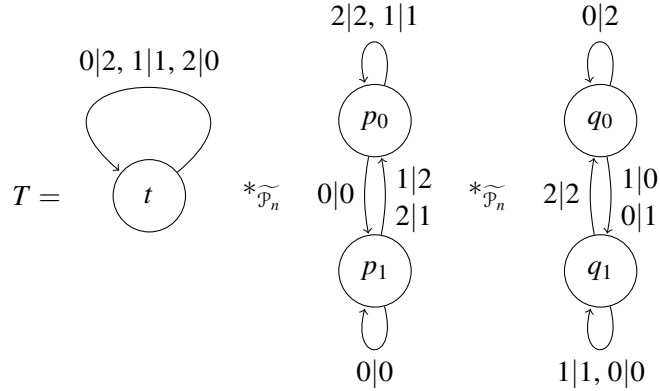

\subsection{Proof of Theorem \ref{Thm:decompositionIntro}}
\label{ssec:decompProof}
Here we prove that the algorithm above works.

  Recall that $\shn{n}$ consist of those transducers which are strongly synchronizing and have an automata-theoretic inverse but which do not necessarily induce homeomorphisms of $\xnz$. Further recall that for $T, U \in \spn{n}$ the product, in $\pn{n}$, of $T$ and $U$ is obtained by identifying the $\omega$-equivalent states of $\core(T \ast U)$, write $T \spnprod{n} U$ for this transducer.
 
 \begin{lemma}\label{lemma:decompnogrowth}
    Let $A$ be a minimal transducer in $\hn{n}$. Let $B$ be the underlying automaton of $A^{-1}$ and $(B_{i})_{i \in \N}$ be the synchronizing sequence of $B$. Let $H \in \shn{n}$ be any transducer such that the underlying automaton of $H$ is $B_{j}$ for some  $j \in \N$. For a state $p^{-1}$ of $A^{-1}$ write $[p^{-1}]$ for the state of $B_{j}$ containing $p^{-1}$. Then 
    \begin{enumerate}
        \item the set of states of $\core(A \ast H)$ is precisely the set $\{(p, [p^{-1}]) \mid p \in Q_{A} \}$. Consequently,
        \item $|A \spnprod{n} H| \le |A|$, and,
        \item the underlying automaton of $A \spnprod{n} H$ belongs to a collapse chain of the underlying automaton of $A$.
    \end{enumerate}
    
 \end{lemma}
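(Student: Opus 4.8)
The plan is to determine $\core(A\ast H)$ explicitly, read off the bounds on size, and then recognise the minimisation as a folding of the strongly synchronising automaton $A$.

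First I would compute the product. The state set of $A\ast H$ is $Q_A\times Q_H=Q_A\times Q_{B_j}$, with $\pi_{A\ast H}(x,(p,s))=(\pi_A(x,p),\pi_H(\lambda_A(x,p),s))$. The guiding observation is the input--output duality of an invertible synchronous transducer: a transition $p\xrightarrow{a\mid b}q$ in $A$ is exactly a transition $p^{-1}\xrightarrow{b\mid a}q^{-1}$ in $A^{-1}$; hence, extending to words, if $A$ reads $w$ from $p_0$, ends in state $p$ and outputs $u$, then $A^{-1}$ reads $u$ from $p_0^{-1}$ and ends in state $p^{-1}$. I would use this to pin down the second coordinate of a core state.

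So take a word $w$ long enough to force a state in $A$ and with $u=\lambda_A(w,\,\cdot\,)$ long enough to force a state in $H$ (note $|u|=|w|$, everything being synchronous). The first coordinate of the forced state is $p:=\mathfrak{s}(w)$. For the second coordinate, $B_j$ is a folding of $B$ (the underlying automaton of $A^{-1}$), so its synchronising map is compatible with the quotient: $\mathfrak{s}^{B_j}(u)=[\mathfrak{s}^{B}(u)]$. By the duality, $\mathfrak{s}^{B}(u)=\pi_{A^{-1}}(u,p_0^{-1})=p^{-1}$, so $\pi_H(u,s)=[p^{-1}]$ independently of the starting state $s$. Therefore every core state has the form $(p,[p^{-1}])$, and conversely each such state is forced by any word forcing $p$ in $A$; this is statement (1). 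Since $p\mapsto(p,[p^{-1}])$ is injective, $|\core(A\ast H)|=|A|$, and projection to the first coordinate is an isomorphism of the underlying automaton of $\core(A\ast H)$ onto that of $A$ (the transition $(\pi_A(x,p),[\pi_A(x,p)^{-1}])$ is again a core state). Statement (2) is then immediate, since passing to $A\spnprod{n}H$ only identifies $\omega$-equivalent states and so cannot increase the number of states.

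For (3), the minimisation identifies $\omega$-equivalent states; since on a synchronous transducer $\omega$-equivalence satisfies $q_1\sim q_2\Rightarrow\pi(x,q_1)\sim\pi(x,q_2)$, the induced relation is a folding of the underlying automaton of $\core(A\ast H)$, which I have identified with $A$. Thus the underlying automaton of $A\spnprod{n}H$ is a folding of the strongly synchronising, core automaton $A$. The remaining point --- the main technical obstacle --- is to show that any folding $\equiv$ of a strongly synchronising core automaton $C$ is realised by a \emph{collapse} sequence. I would prove this by induction on $|C|$: given a nontrivial $\equiv$-block containing $p\neq q$, strong synchronisation provides a word $u$ of maximal length with $\pi_C(u,p)\neq\pi_C(u,q)$; maximality forces $\pi_C(x,\pi_C(u,p))=\pi_C(x,\pi_C(u,q))$ for every $x$, while the folding property gives $\pi_C(u,p)\equiv\pi_C(u,q)$. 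Hence $\pi_C(u,p)$ and $\pi_C(u,q)$ are a pair of distinct $\equiv$-equivalent states with identical transition maps, i.e.\ an admissible collapse step staying within a block of $\equiv$; collapsing them and inducting realises $C/{\equiv}$ as a collapse sequence from $C$. Applying this to $C\cong A$ with $\equiv$ the $\omega$-equivalence folding gives that the underlying automaton of $A\spnprod{n}H$ is collapse equivalent to that of $A$, which is (3). The delicate part is precisely this maximal-word argument: it must produce an \emph{equal-transition} pair, not merely a pair with $\omega$-equivalent images, and this is exactly where strong synchronisation is indispensable --- without it a folding need not be a collapse sequence.
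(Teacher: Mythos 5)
Your proof is correct, and for parts (a) and (b) it follows essentially the paper's route: you compute that reading a sufficiently long word $w$ from any state of $A\ast H$ forces the state $(p,[p^{-1}])$ with $p=\mathfrak{s}(w)$, using the input--output duality between $A$ and $A^{-1}$ together with compatibility of the synchronizing maps of $B$ and its folding $B_j$; the paper verifies instead that the set $\{(p,[p^{-1}])\mid p\in Q_A\}$ is closed under transitions and that each of its elements lies on a cycle (hence is a core state), which amounts to the same computation, and your version has the small advantage of giving both inclusions for the core at once. The genuine difference is in part (c). The paper disposes of it in a single sentence: since $A\spnprod{n}H$ is obtained from $\core(A\ast H)$ by identifying $\omega$-equivalent states, the underlying automata are collapse equivalent. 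As you correctly observe, this is not immediate from the definitions: a collapse step may only merge two states with \emph{equal} transition functions, whereas $\omega$-equivalent states of a synchronous transducer are only guaranteed to transition to $\omega$-equivalent states, so the minimisation relation is a priori just a folding. Your maximal-word argument --- for an $\omega$-equivalent pair $p\neq q$, take a longest word $u$ with $\pi(u,p)\neq\pi(u,q)$, which exists because strong synchronization bounds the length of distinguishing words; maximality then forces $\pi(u,p)$ and $\pi(u,q)$ to have identical transition functions, while the folding property keeps them in one block --- supplies exactly the bridging lemma (every folding of a strongly synchronizing core automaton is realized by a collapse sequence) that the paper leaves implicit, and the induction after each single collapse goes through since the collapsed automaton is again strongly synchronizing and core with the induced folding. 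So your proof is not only correct but more complete than the paper's on this point; the cost is the extra induction, the benefit is that (c) is justified rather than asserted. One stylistic caveat: in part (a) the notation $u=\lambda_A(w,\cdot)$ is ambiguous, since the output word depends on the starting state; your argument is nevertheless sound because, for each start $p_0$, the duality gives $\mathfrak{s}^{B}(\lambda_A(w,p_0))=\pi_{A^{-1}}(\lambda_A(w,p_0),p_0^{-1})=p^{-1}$, so all these (possibly different) output words force the same state $[p^{-1}]$ of $B_j$.
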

 \begin{proof}

     Let $p \in Q_{A}$ and $x \in \xn$ and consider the transition $\pi_{A\ast H}(x,(p, [p^{-1}]))$. Let $y = \lambda_{A}(x, p)$ and $q = \pi_{A}(x, p)$. Then, in $A^{-1}$, we have $\pi_{A^{-1}}(y, p^{-1}) = q^{-1}$, therefore, in $B_{j}$, $\pi_{B_{j}}(y, [p^{-1}]) = [q^{-1}]$. Hence, we conclude that $$\pi_{A\ast H}(x,(p, [p^{-1}])) = (q, [q^{-1}]).$$ 
     
     To see that  $(p, [p^{-1}])$ is a state of $\core(A \ast H)$, let $\gamma \in \xnp$ be a word such that $\pi_{A}(\gamma, p) = p$. The preceding paragraph now shows that $\pi_{A \ast H}(\gamma, (p, [p^{-1}])) = (p, [p^{-1}]) $.
     
     Thus we see that $|\core(A \ast H)| = |A|$. In particular the underlying automaton of $\core(A \ast H)$ is isomorphic as an automaton to the underlying automaton of $A$ via the map sending $(p, [p^{-1}])$ to $p$.
     
     Now, $A \spnprod{n} H$ is obtained by  identifying $\omega$-equivalent states of $\core(A \ast H)$. Therefore the underlying automaton of $A \spnprod{n} H$ belongs to a collapse chain  of the underlying automaton of  $\core(A \ast H)$ as required.\qed
     \end{proof}
     
     \begin{lemma}\label{Lemma:decompfindingautomorphism}
         Let $A \in \hn{n}$ be a minimal transducer, let $B$ be the underlying automaton of $A^{-1}$ and $(B_{i})_{i \in \N}$ be the synchronizing sequence of $B$. Suppose there are distinct states $q_1, q_2 \in Q_{A}$  such that the maps $\pi_{A}(\cdot, q_1)$ and $\pi_{A}(\cdot, q_2)$ are equal. Then there is a transducer $H$ with the following properties:
         \begin{enumerate}
             \item there is a $j \in \N$ such that $H = H(B_{j}, \phi)$ for an automorphism $\phi$ of $B_{j}$ and,
             \item writing $[q^{-1}]$ for the state of $B_{j}$ containing $q^{-1}$, $q \in Q_{A}$, we have $$\lambda_{A}(\cdot, q_2) \circ \lambda_{H(B_{j}, \phi)}(\cdot, [q_2^{-1}]): \xn \to \xn$$ is precisely the map $\lambda_{A}(\cdot, q_1): \xn \to \xn$. 
         \end{enumerate}
     \end{lemma}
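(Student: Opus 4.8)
The plan is to realise the non-trivial permutation $\alpha$ determined by $q_1,q_2$ as the local action, at a single state, of a graph automorphism of some term $B_j$ of the synchronizing sequence of $B$. Since $A$ is invertible the maps $\rho_1:=\lambda_A(\cdot,q_1)$ and $\rho_2:=\lambda_A(\cdot,q_2)$ are permutations of $\xn$, and I set $\alpha$ to be the unique permutation with $\rho_2\circ\alpha=\rho_1$ (operators acting on the right, as in the paper). As $A$ is minimal and $\pi_A(\cdot,q_1)=\pi_A(\cdot,q_2)$ with $q_1\neq q_2$, the two states cannot also agree on outputs, so $\rho_1\neq\rho_2$ and $\alpha$ is non-trivial; this is exactly the $\alpha$ of step \textbf{A6}.

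I would then choose the level $j$. For a state $s$ of $B$ write $[s]$ for the state of $B_j$ that contains it. I ask that for all $x,y$ lying in a common cycle of $\alpha$ the edges labelled $x$ and $y$ leaving $[q_2^{-1}]$ be parallel in $B_j$, i.e. $\pi_{B_j}(x,[q_2^{-1}])=\pi_{B_j}(y,[q_2^{-1}])$. By Theorem~\ref{thm:collapsingprocedure} this says that $\pi_B(x,q_2^{-1})$ and $\pi_B(y,q_2^{-1})$ are identified at level $j$; since identification only coarsens as $j$ increases, the set of admissible $j$ is upward closed. It is non-empty, because $A$ is bi-synchronizing, so $B=A^{-1}$ is synchronizing and at its minimal synchronizing level $k$ the automaton $B_k$ has a single state, making every pair of edges trivially parallel. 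I take $j$ to be the least admissible level, matching step \textbf{A7}.

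With $j$ chosen I would define $\tau_\alpha$ to be the identity on every vertex and on every edge except the edges leaving $[q_2^{-1}]$, on which it sends the edge labelled $x$ to the edge labelled $(x)\alpha$. The parallel-edge condition is precisely what guarantees that $\tau_\alpha$ preserves the terminus of each permuted edge, so $\tau_\alpha$ is a well-defined automorphism of the directed graph $G_{B_j}$; setting $H:=H(B_j,\tau_\alpha)$ then gives property (1). Property (2) follows by unwinding the definition of $H(B_j,\tau_\alpha)$: because $\tau_\alpha$ carries the edge labelled $x$ out of $[q_2^{-1}]$ to the edge labelled $(x)\alpha$, the state $[q_2^{-1}]$ of $H$ induces exactly $\alpha$ on $\xn$, whence $\lambda_A(\cdot,q_2)\circ\lambda_{H(B_j,\tau_\alpha)}(\cdot,[q_2^{-1}])=\rho_2\circ\alpha=\rho_1=\lambda_A(\cdot,q_1)$.

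The step needing the most care is the justification of the choice of $j$. Mere existence of some $H$ satisfying (1) and (2) is cheap, since the single-state level $k$ always works; the real content, needed so that the decomposition of Theorem~\ref{Thm:decompositionIntro} strictly reduces the number of states, is that at the \emph{least} admissible $j$ the states $[q_1^{-1}]$ and $[q_2^{-1}]$ remain distinct. To control this I would use the identity $\pi_B((w)\alpha\,a,q_1^{-1})=\pi_B(w\,a,q_2^{-1})$, valid for every letter $w$ and word $a$, which one reads off from the description of the inverse transducer and which expresses that the edges out of $q_1^{-1}$ are the $\alpha$-relabelling of those out of $q_2^{-1}$. Comparing this identity with the parallel-edge condition at level $j$ is the mechanism by which the distinctness of $[q_1^{-1}]$ and $[q_2^{-1}]$ is seen to persist down to the chosen level, and carrying out that comparison carefully is the crux of the argument.
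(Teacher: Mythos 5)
Your proposal is correct, and it is essentially the paper's own proof in different packaging. Your least admissible level $j$ coincides with the paper's level $J$: via your identity $\pi_B((w)\alpha\, a, q_1^{-1}) = \pi_B(w a, q_2^{-1})$, the paper's relation $\mathscr{R}$ relates exactly the targets of the edges out of $q_2^{-1}$ labelled $w$ and $(w)\alpha$, so the paper's requirement that each class of the transitive closure $\overline{\mathscr{R}}$ lie inside a single state of $B_J$ is precisely your cycle-wise parallel-edge condition; likewise your $\tau_\alpha$ is the paper's vertex-fixing automorphism $\phi$, and both verifications of property (2) reduce to the same computation $\rho_2\circ\alpha=\rho_1$. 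The only content of the paper's proof that you leave unfinished is the point you flag at the end: that $[q_1^{-1}]$ and $[q_2^{-1}]$ remain distinct in $B_j$. The paper obtains this from minimality of $J$: if the two classes coincided at some level $J\ge 1$, then by Theorem~\ref{thm:collapsingprocedure} (reading one letter and then any word of length $J-1$) every $\mathscr{R}$-related pair would already be identified at level $J-1$, so every $\overline{\mathscr{R}}$-class would sit inside a single state of $B_{J-1}$, contradicting minimality of $J$; the case $J=0$ is trivial. As you rightly observe, this distinctness is not required by properties (1) and (2), so your argument does prove the lemma as stated; but distinctness, together with the fact that $\tau_\alpha$ fixes all edges away from $[q_2^{-1}]$ so that $\lambda_{H}(\cdot,[q_1^{-1}])$ is the identity, is what Proposition~\ref{prop:decomposition} actually uses to make the states $(q_1,[q_1^{-1}])$ and $(q_2,[q_2^{-1}])$ of the product $\omega$-equivalent and hence strictly reduce the state count. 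So in a final write-up you should carry out that comparison (it is exactly the minimality argument above) rather than leave it as a remark.
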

     \begin{proof}
         Since $q_1, q_2$ are distinct states of $A$ and since $A$ is minimal, the states $q_1$ and $q_2$ are not $\omega$-equivalent. Therefore, there are $x\neq y$ and $z \in \xn$ such that $\lambda_{A}(x,q_1) = \lambda_{A}(y,q_2) = z$. Let $p_1 = \pi_{A}(x,q_1)$ and $p_2 = \pi_{A}(y,q_2)$. In $A^{-1}$, we have $\pi_{A^{-1}}(z, q_1^{-1}) = p_{1}^{-1}$ and $\pi_{A^{-1}}(z, q_{2}^{-1}) = p_2^{-1}$. Since $A^{-1}$ has minimal synchronizing level $k$, it therefore follows that either $k=1$ and $p_1 = p_2$ or $k \ge 2$ and the maps $\pi_{A^{-1}}(\cdot, p_1^{-1}): \xn^{k-1} \to Q_{A^{-1}}$ and $\pi_{A^{-1}}(\cdot, p_2^{-1}): \xn^{k-1} \to Q_{A^{-1}}$ are equal. Therefore, by Theorem~\ref{thm:collapsingprocedure}, the minimal $j \in \N$ for which $p_1^{-1}$ and $p_2^{-1}$ belong to the same state of $B_{j}$ is at most $k-1$.
	
	Define a relation $\mathscr{R}$ on the set of states $$Q_{q_1^{-1},q_2^{-1}}:= \{ p^{-1} \in Q_{A^{-1}} \mid \exists x \in \xn, a \in \{1,2\}: \pi_{A^{-1}}(x,q_a^{-1}) = p^{-1} \}$$ by setting $p^{-1} \mathscr{R} q^{-1}$ if and only if  there is a letter $z \in \xn$ such that $$\pi_{A^{-1}}(z,q_1^{-1}) = p^{-1} \mbox{ and } \pi_{A^{-1}}(z,q_2^{-1}) = q^{-1}.$$ Let $\overline{\mathscr{R}}$ be the transitive closure of $\mathscr{R}$, so that $\overline{\mathscr{R}}$ is an equivalence relation on $Q_{q_1^{-1},q_2^{-1}}$. By the preceding paragraph, for a state $p^{-1} \in Q_{q_1^{-1},q_2^{-1}}$, there is a minimal $j \in \N$, $j \le k-1$, and all elements of $[p^{-1}]_{\overline{\mathscr{R}}}$, the equivalence class of $p^{-1}$, belong to the same state of $B_{j}$. 
	
	Let $J \in \N$, $J \le k-1$, be minimal such that for any $p^{-1} \in Q_{A^{-1}}$ there is a state of $B_{J}$ such that all elements of $[p^{-1}]$ belong to the same state of $B_{J}$. Observe that if $\mathscr{R}$ is the diagonal relation, that is, if $\mathscr{R}$ is precisely the set $\{(p^{-1}, p^{-1}) \mid p^{-1} \in Q_{q_1^{-1},q_2^{-1}}\}$, then $B_{j} = B_{0}$. Further observe that $\mathscr{R}$ is the diagonal relation precisely when for all $x \in \xn$, $\pi_{A^{-1}}(x, q_1^{-1}) = \pi_{A^{-1}}(x, q_2^{-1})$. If there is $z \in \xn$, such that $\pi_{A^{-1}}(z, q_1^{-1}) \ne \pi_{A^{-1}}(z, q_2^{-1})$, then minimality of $J$ forces that the states $q_1^{-1}$ and $q_{2}^{-1}$ do not belong to the same state of $B_{J}$. Therefore,  as $q_1$ and $q_2$ are distinct states of $A$,  they are contained in distinct states of $B_{J}$.
	
	Let $t_1$ and $t_2$ be the distinct states of $B_{J}$ containing $q_1^{-1}$ and $q_{2}^{-1}$ respectively. Observe that the maps $\pi_{B_{J}}(\cdot, t_1)$ and $\pi_{B_{J}}(\cdot, t_2)$ are equal by choice of $J$ and definition of the relation $\overline{\mathscr{R}}$. Define a map  $\lambda_{B_{J}}(\cdot, t_2): \xn \to \xn$ as follows. Let $x, \in  \xn$ and let $z = \lambda_{A}(x, q_1)$ and $y = \lambda_{A}(x, q_2)$ then set $\lambda_{B_{J}}(y, t_2): = z$. Since $\lambda_{A}(\cdot, q_1)$ and $\lambda_{A}(\cdot, q_2)$ are permutations of $\xn$, then $\lambda_{A}(\cdot, t_2)$ is a bijection as well. Moreover we note that $$\lambda_{A}(\cdot, q_2)\circ \lambda_{A}(\cdot, t_2): \xn \to \xn$$ is precisely the map $\lambda_{A}(\cdot, q_1): \xn \to \xn$.
	
	Let $a,b,c \in \xn$ be arbitrary such that $\lambda_{B_{J}}(a, t_2)= b$ and $\lambda_{B_{J}}(b, t_2) = c$. By definition, there are $x,y \in \xn$ such that $\lambda_{A}(x, q_1) = a$, $\lambda_{A}(x, q_2) = b$, $\lambda_{A}(y, q_1) = b$ and $\lambda_{A}(y, q_2) = c$. By the assumption that $\pi_{A}(\cdot, q_1)$ and $\pi_{A}(\cdot, q_2)$ are equal, we have $p^{-1} := \pi_{A^{-1}}(a, q_1^{-1}) =\pi_{A^{-1}}(b, q_2^{-1})$ and $q^{-1}:= \pi_{A^{-1}}(b, q_1^{-1}) = \pi_{A^{-1}}(c,q_2^{-1})$. Thus, $p^{-1}$  is $\mathscr{R}$ related to $q^{-1}$.  Therefore, it is the case that $\pi_{B_{J}}(b, t_2) = \pi_{B_{J}}(c, t_2)$.
	
	Let $(x_1 \ x_2 \ x_3 \ \ldots \ x_m)$ be a sequence of elements of $\xn$ such that for $1 \le i \le m-1$, $\lambda_{B_{J}}(x_i, t_2) = x_{i+1}$ and $\lambda_{B_{J}}(x_{m}, t_2) = x_1$. By an induction argument making use of the previous paragraph we see that there is a state $t \in Q_{B_{J}}$ such that $\pi_{B_{J}}(x_i, t_2) = t$ for all $1 \le i \le m$. Thus, it follows that given $a,b \in \xn$ such that $\lambda_{B_{J}}(a, t_2)= b$ then, $\pi_{B_{J}}(a, t_2) = \pi_{B_{J}}(b, t_2)$.
	
	Let $t$ be any state of $B_{J}$ not equal to $t_2$, we set $\lambda_{B_J}(\cdot, t) : \xn \to \xn$ to be the identity permutation. Set $H(B_j):= (\xn, Q_{B_{J}}, \pi_{B_J}, \lambda_{B_J})$.  
	
	Let $\phi$ be the automorphism of $G_{B_J}$ which fixes all vertices of $G_{B_j}$  and whose action on the edges of $G_{B_{j}}$ is as follows. For an edge $(t_2, x, t)$ of $G_{B_{J}}$ with initial vertex $t_2$, set $(t_2, x, t)\phi := (t_2, \lambda_{B_J}(x,t_2), t)$; $\phi$ fixes every other edge. It is clear from the preceding paragraphs that $H(B_{J}, \phi) = H(B_{J})$. Thus we may take $H = H(B_{J})$ concluding the proof. \qed
     \end{proof}
 
\begin{prop}\label{prop:decomposition}
	Let $A \in \hn{n}$ be a minimal transducer, $B$ be the underlying automaton of $A^{-1}$, $(B_{i})_{i \in \N}$ be the synchronizing sequence of $B$ and $k \in \N$ be minimal such that $|B_{k}| =1$. Suppose there are distinct states $q_1, q_2 \in Q_{A}$  such that the maps $\pi_{A}(\cdot, q_1)$ and $\pi_{A}(\cdot, q_2)$ are equal.
	Then, there is an $i \in \N$, and an automorphism $\phi$ of $G_{B_{i}}$ fixing vertices and such that  $|A\spnprod{n}H(B_i, \phi)|< |A|$. Thus, $G_{H(B_{i},\phi)} = G_{B_i}$ is an amalgamation of $G_{A}$. Moreover, the underlying automaton of $A\spnprod{n}H(B_i, \phi)$ is belongs to a collapse chain of the underlying automaton of $A$. Therefore, $(A\spnprod{n}H(B_i, \phi))$   has minimal synchronizing  level at most the minimal synchronizing level of $A$.
\end{prop}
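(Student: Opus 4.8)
The plan is to combine the two preparatory lemmas: Lemma~\ref{Lemma:decompfindingautomorphism} manufactures the required factor $H$, Lemma~\ref{lemma:decompnogrowth} pins down the states of the core product, and then a short $\omega$-equivalence computation shows the state count drops strictly. Everything after that is a citation.

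First I would apply Lemma~\ref{Lemma:decompfindingautomorphism} to the given states $q_1,q_2$ (the hypothesis $\pi_A(\cdot,q_1)=\pi_A(\cdot,q_2)$ is exactly what it needs). This yields an index $j\in\N$ and a vertex-fixing automorphism $\phi$ of $G_{B_j}$ such that $H:=H(B_j,\phi)$ satisfies $\lambda_A(\cdot,q_2)\circ\lambda_H(\cdot,[q_2^{-1}])=\lambda_A(\cdot,q_1)$. Unpacking the construction inside that lemma, $\phi$ modifies only the edges leaving the state $[q_2^{-1}]$, so $\lambda_H(\cdot,t)$ is the identity permutation for every state $t\neq[q_2^{-1}]$; in particular $\lambda_H(\cdot,[q_1^{-1}])$ is the identity, using that $[q_1^{-1}]\neq[q_2^{-1}]$ (also established there). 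I set $i:=j$.

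The heart of the proof is then the following. By Lemma~\ref{lemma:decompnogrowth} the states of $\core(A\ast H)$ are exactly $\{(p,[p^{-1}])\mid p\in Q_A\}$, so $\lvert\core(A\ast H)\rvert=\lvert A\rvert$, and on input $x$ a state $(p,[p^{-1}])$ transitions to $(\pi_A(x,p),[\pi_A(x,p)^{-1}])$. I claim the two distinct states $(q_1,[q_1^{-1}])$ and $(q_2,[q_2^{-1}])$ are $\omega$-equivalent. On any letter $x$, put $q:=\pi_A(x,q_1)=\pi_A(x,q_2)$ (equal by hypothesis); both states transition to the single state $(q,[q^{-1}])$. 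For outputs, $\lambda_{A\ast H}(x,(q_2,[q_2^{-1}]))=\lambda_H(\lambda_A(x,q_2),[q_2^{-1}])=\lambda_A(x,q_1)$ by the defining property of $H$, while $\lambda_{A\ast H}(x,(q_1,[q_1^{-1}]))=\lambda_H(\lambda_A(x,q_1),[q_1^{-1}])=\lambda_A(x,q_1)$ since $\lambda_H(\cdot,[q_1^{-1}])$ is the identity. Thus both states emit the same letter and pass to the same state, so by the synchronous $\omega$-equivalence criterion recalled in Section~\ref{Section:transducers} an easy induction on word length gives $\lambda_{A\ast H}(a,(q_1,[q_1^{-1}]))=\lambda_{A\ast H}(a,(q_2,[q_2^{-1}]))$ for all $a\in X_n^{*}$. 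As $q_1\neq q_2$ these are genuinely distinct states of the core, so minimising to form $A\spnprod{n}H$ identifies them and $\lvert A\spnprod{n}H(B_i,\phi)\rvert<\lvert A\rvert$.

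The remaining assertions are immediate. That $G_{H(B_i,\phi)}=G_{B_i}$ is an amalgamation of $G_A$ is the observation recorded in the amalgamation subsection that every term of the synchronizing sequence of the inverse automaton is an amalgamation of the domain graph; collapse equivalence of the underlying automaton of $A\spnprod{n}H(B_i,\phi)$ to that of $A$ is the third part of Lemma~\ref{lemma:decompnogrowth}; and the bound on the minimal synchronizing level follows from the remark that an automaton collapse equivalent to a strongly synchronizing automaton is synchronizing at the latter's minimal synchronizing level. I expect the only non-formal step to be the $\omega$-equivalence computation, and its one subtlety is matching the bookkeeping of Lemma~\ref{Lemma:decompfindingautomorphism} (which phrases its output condition at $[q_2^{-1}]$) against the need for $H$ to act trivially at $[q_1^{-1}]$ — which is why I unpack the vertex-fixing, single-vertex-active form of $\phi$ from that lemma's proof rather than treat its statement as a black box.
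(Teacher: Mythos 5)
Your proof is correct and takes essentially the same route as the paper's: the paper's own proof consists of exactly the same two steps, invoking Lemma~\ref{Lemma:decompfindingautomorphism} to produce $H = H(B_j,\phi)$ with the composition property at $[q_2^{-1}]$, and then citing Lemma~\ref{lemma:decompnogrowth} for the structure of $\core(A \ast H)$. The paper leaves implicit the $\omega$-equivalence of the distinct core states $(q_1,[q_1^{-1}])$ and $(q_2,[q_2^{-1}])$, which is what forces the strict inequality $|A\spnprod{n}H(B_i,\phi)| < |A|$; your explicit computation of that step --- including unpacking from the proof of Lemma~\ref{Lemma:decompfindingautomorphism} that $\phi$ fixes all vertices and that $\lambda_H(\cdot,t)$ is the identity for every state $t \neq [q_2^{-1}]$, so in particular at $[q_1^{-1}]$ --- supplies precisely the detail the paper's one-line conclusion omits.
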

\begin{proof}

By Lemma~\ref{Lemma:decompfindingautomorphism} there is a transducer $H$ with the following properties:
         \begin{itemize}
             \item there is a $j \in \N$ such that $H = H(B_{j}, \phi)$ for an automorphism $\phi$ of $B_{j}$ and,
             \item writing $[q^{-1}]$ for the state of $B_{j}$ containing $q^{-1}$, $q \in Q_{A}$, we have $$\lambda_{A}(\cdot, q_2) \circ \lambda_{H(B_{j}, \phi)}(\cdot, [q_2^{-1}]): \xn \to \xn$$ is precisely the map $\lambda_{A}(\cdot, q_1): \xn \to \xn$. 
         \end{itemize}

The result follows by applying Lemma~\ref{lemma:decompnogrowth} to the product  $A \spnprod{n} H$. \qed       
	
\end{proof}

\begin{theorem}\label{Thm:decomposition}
	Let $T \in \hn{n}$, $A$ the underlying automaton of $T$, $(A_{i})_{i \in \N}$ the synchronizing sequence of $A$ and  $k$ be minimal such that $A_{j} = A_{k}$ for all $j \ge k$. Note that since $T$ is strongly synchronizing, $A_{k} = 1$. Then $T$ can be written as a product of a single state transducer $U$ and at most $|A|-1$ elements of $\hn{n}$ which arise from vertex-fixing automorphisms  of directed graphs  which are amalgamations of $G_{A}$. 
\end{theorem}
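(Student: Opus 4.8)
The plan is to prove this by induction on $|A|$, the number of states of $T$, repeatedly invoking Proposition~\ref{prop:decomposition} to peel off one vertex-fixing-automorphism factor at a time while strictly decreasing the state count. The base case is immediate: if $|A|=1$ then $T$ is already a single-state transducer, so we take $U=T$ and the empty product of automorphism factors, and indeed $0\le|A|-1$.

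For the inductive step, assume $|A|>1$. The first task is to locate a collapsible pair of states. Since $T$ is strongly synchronizing we have $A_{k}=1$; were the synchronizing sequence to fail to strictly shrink at the first step (that is, $|A_{1}|=|A|$), then $A_{i}=A$ for all $i$ and the sequence could never reach a single state, contradicting $A_{k}=1$. Hence $|A_{1}|<|A|$, and by Theorem~\ref{thm:collapsingprocedure} (equivalently, by the construction of $\mathsf{A}$) there are distinct states $q_{1},q_{2}\in Q_{A}$ with $\pi_{A}(\cdot,q_{1})=\pi_{A}(\cdot,q_{2})$. This is exactly the hypothesis of Proposition~\ref{prop:decomposition}. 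Applying it with $B$ the underlying automaton of $T^{-1}$, I obtain an index $i$ and a vertex-fixing automorphism $\phi$ of $G_{B_{i}}$ such that the factor $H:=H(B_{i},\phi)$ satisfies $|T\spnprod{n}H|<|A|$, with $G_{H}=G_{B_{i}}$ an amalgamation of $G_{A}$ and the underlying automaton $A'$ of $T':=T\spnprod{n}H$ collapse equivalent to $A$.

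Now $T'\in\hn{n}$ has $|A'|<|A|$, so the induction hypothesis writes $T'=U\spnprod{n}K_{1}\spnprod{n}\cdots\spnprod{n}K_{s}$ with $U$ single-state, $s\le|A'|-1$, and each $K_{j}$ arising from a vertex-fixing automorphism of an amalgamation of $G_{A'}$. Two bookkeeping observations then finish the argument. First, since $A'$ is collapse equivalent to $A$, its underlying graph $G_{A'}$ is an amalgamation of $G_{A}$; as amalgamation is transitive, every amalgamation of $G_{A'}$ is an amalgamation of $G_{A}$, so each $K_{j}$ arises from a vertex-fixing automorphism of an amalgamation of $G_{A}$. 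Second, inverting $T'=T\spnprod{n}H$ gives $T=T'\spnprod{n}H^{-1}$; I would verify directly that for a vertex-fixing $\phi$ one has $H(B_{i},\phi)^{-1}=H(B_{i},\phi^{-1})$ (the input and output roles swap, and because $\phi$ fixes vertices the transition data of the inverse coincides with that of $H(B_{i},\phi^{-1})$, using that an edge and its $\phi$-image share endpoints), so $H^{-1}$ again arises from a vertex-fixing automorphism $\phi^{-1}$ of the amalgamation $G_{B_{i}}$ of $G_{A}$. Assembling, $T=U\spnprod{n}K_{1}\spnprod{n}\cdots\spnprod{n}K_{s}\spnprod{n}H^{-1}$ is a product of the single-state $U$ with $s+1\le(|A'|-1)+1\le|A|-1$ factors of the required type.

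The genuinely new content of the theorem is concentrated in Proposition~\ref{prop:decomposition} together with its supporting Lemmas~\ref{lemma:decompnogrowth} and~\ref{Lemma:decompfindingautomorphism}, so the main work here is careful bookkeeping rather than a hard new idea. The step I would watch most closely is keeping the amalgamations anchored to the \emph{original} graph $G_{A}$ rather than to the shrinking intermediate graphs; this is handled by transitivity of amalgamation combined with the collapse-equivalence conclusions already packaged in Proposition~\ref{prop:decomposition}. The only other subtlety is the identity $H(B_{i},\phi)^{-1}=H(B_{i},\phi^{-1})$, which is what guarantees that the ``inverse product'' one reads off really consists of vertex-fixing-automorphism factors.
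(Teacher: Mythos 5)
Your proof is correct and takes essentially the same route as the paper: the paper's own proof of Theorem~\ref{Thm:decomposition} is a one-line appeal to repeated application of Proposition~\ref{prop:decomposition}, which your induction on $|A|$ simply formalizes. The bookkeeping you supply --- extracting the collapsible pair from $|A_1|<|A|$, anchoring all amalgamations to $G_A$ by transitivity together with collapse equivalence, and verifying $H(B_i,\phi)^{-1}=H(B_i,\phi^{-1})$ for vertex-fixing $\phi$ so that the inverted factors are still of the required form --- is exactly what the paper leaves implicit (cf.\ its algorithm steps, which end by taking ``the inverse product of the finite order factors''), and each of these checks is sound.
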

\begin{proof}
	The proof follows by repeatedly applying Proposition~\ref{prop:decomposition}.\qed
\end{proof}

We note that Theorem~\ref{Thm:decompositionIntro} is a corollary of Theorem~\ref{Thm:decomposition} above.

\begin{lemma}\label{lem:mutltiedges}
	Let $A$ be a strongly synchronizing core automaton with more that one state. Then for any pair $p,q \in Q_{A}$ there are is a least one element $x \in \xn$ such that $\pi_{A}(x,p) \ne q$. In other words, there are at most $n-1$ edges in $G_{A}$ from the vertex $p$ to the vertex $q$.
\end{lemma}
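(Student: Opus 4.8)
The plan is to argue by contradiction: suppose there is a pair $p,q$ with $\pi_{A}(x,p)=q$ for every $x\in\xn$, so that all $n$ edges leaving $p$ terminate at $q$, and derive a contradiction with $|Q_{A}|>1$. First I would reduce to the minimal synchronizing level: since $A$ is strongly synchronizing, let $k$ be its minimal synchronizing level, and note that $|Q_{A}|>1$ forces $k\ge 1$ (an automaton synchronizing at level $0$ has a single state). Write $\mathfrak{s}_{k}\colon \xn^{k}\to Q_{A}$ for the synchronizing map at level $k$, and recall the transition identity $\pi_{A}(y,\mathfrak{s}_{k}(v_{1}\ldots v_{k}))=\mathfrak{s}_{k}(v_{2}\ldots v_{k}y)$, valid for all $v_{1}\ldots v_{k}\in\xn^{k}$ and $y\in\xn$, which is immediate from the definition of synchronization at level $k$.

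Since $A$ is core, $\mathfrak{s}_{k}$ is onto, so I can choose a word $a_{1}a_{2}\ldots a_{k}\in\xn^{k}$ forcing $p$. Setting $c:=a_{2}\ldots a_{k}\in\xn^{k-1}$ and writing $c=c_{1}\ldots c_{k-1}$, the transition identity gives $\mathfrak{s}_{k}(cx)=\pi_{A}(x,\mathfrak{s}_{k}(a_{1}\ldots a_{k}))=\pi_{A}(x,p)=q$ for every $x\in\xn$; that is, all $n$ length-$k$ words with prefix $c$ force the single state $q$. The heart of the argument is then to propagate the irrelevance of the distinguished letter leftwards through the word by repeatedly applying $\pi_{A}(\cdot,-)$. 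Concretely, I would prove by induction on $0\le j\le k-1$ that $\mathfrak{s}_{k}(c_{j+1}\ldots c_{k-1}\,x\,y_{1}\ldots y_{j})$ is independent of $x\in\xn$ for all $y_{1},\ldots,y_{j}\in\xn$; the base case $j=0$ is the statement just derived, and the inductive step applies $\pi_{A}(y_{j+1},\cdot)$ to both sides and uses the transition identity to shift the leading letter $c_{j+1}$ off and append $y_{j+1}$. At $j=k-1$ the prefix is exhausted and one obtains that $\mathfrak{s}_{k}(x\,y_{1}\ldots y_{k-1})$ is independent of $x$; equivalently, $\mathfrak{s}_{k}(v)$ depends only on the last $k-1$ letters of $v$. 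This says exactly that $A$ is synchronizing at level $k-1$, contradicting the minimality of $k$ (when $k=1$ this already reads as $A$ having a single state, directly contradicting $|Q_{A}|>1$). The contradiction shows no such pair $p,q$ exists, so there are at most $n-1$ edges from $p$ to $q$.

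The only real obstacle is the bookkeeping in the inductive propagation: keeping the shrinking prefix $c_{j+1}\ldots c_{k-1}$, the distinguished free letter $x$, and the growing suffix $y_{1}\ldots y_{j}$ correctly aligned so that each word has total length $k$ and the transition identity applies cleanly at every step. Everything else is definitional, requiring only that $\mathfrak{s}_{k}$ be onto (core) and that $k$ be minimal. I note that the same conclusion can be reached through the folding picture of Proposition~\ref{p:fold}, where the hypothesis says a block of the folding of $G(n,k)$ contains the entire out-star $\{cx:x\in\xn\}$ and one argues that such a block must be all of $\xn^{k}$; but the direct computation with $\mathfrak{s}_{k}$ above avoids passing through de Bruijn graphs and is, I expect, the shortest route.
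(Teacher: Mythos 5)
Your proof is correct, but it takes a genuinely different route from the paper's. The paper stays inside the synchronizing-sequence machinery of Theorem~\ref{thm:collapsingprocedure}: assuming every edge from $p$ lands on $q$, it passes to the penultimate term $A_{k-1}$ of the synchronizing sequence of $A$ (where $k$ is minimal with $|A_{k}|=1$), notes that $A_{k-1}$ is core, synchronizing at level $1$, and has more than one state, and observes that the states $t,t'$ of $A_{k-1}$ containing $p,q$ inherit the property $\pi_{A_{k-1}}(x,t)=t'$ for all $x\in\xn$; level-$1$ synchronization plus coreness then force $|A_{k-1}|=1$, a contradiction. You never pass to a quotient automaton: instead you run an induction on words at the minimal synchronizing level itself, shifting the ``irrelevant'' letter leftwards until you conclude that $\mathfrak{s}_{k}$ factors through the last $k-1$ letters, contradicting minimality of $k$. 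Your argument is more elementary and self-contained (it uses only the definitions of synchronization and core), whereas the paper's is a three-line application of machinery it has already built and uses repeatedly in the surrounding sections.

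One step you should justify rather than describe as ``exactly'' definitional: the passage from ``$\mathfrak{s}_{k}(v)$ depends only on the last $k-1$ letters of $v$'' to ``$A$ is synchronizing at level $k-1$.'' This implication needs the core hypothesis a second time: since every state lies in the image of $\mathfrak{s}_{k}$ and $k\ge1$, every state has an incoming edge, so for $|w|=k-1$ and any state $s$ one may write $s=\pi_{A}(x,s')$ and compute $\pi_{A}(w,s)=\pi_{A}(xw,s')=\mathfrak{s}_{k}(xw)$, which by your conclusion depends only on $w$, and hence not on $s$. Without coreness the implication is false: over $X_{2}$ take states $a,b,c$ with every letter sending $a\mapsto b$, $b\mapsto c$, $c\mapsto c$; this automaton has minimal synchronizing level $2$, yet $\mathfrak{s}_{2}$ is constant and so certainly depends only on the last letter. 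Since the repair is one line and uses the same onto-ness of $\mathfrak{s}_{k}$ you already invoke (and which you do use correctly in your $k=1$ case), this is a gloss rather than a gap.
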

\begin{proof}
	Suppose for a contradiction that there are states $p,q \in Q_{A}$ such that $\pi_{A}(x,p) = q$ for all $x \in \xn$. Let $(A_{i})_{i \in \N}$ be the synchronizing sequence of $A$, and let $k$ be minimal such that $A_{k} = 1$. Notice that $A_{k-1}$ is synchronizing at level $1$ and core and has more than one state by assumption on $k$. Let  $t$ be the state of $A_{k-1}$ which contains $p$, and $t'$ be the state of $A_{k-1}$ containing $q$. It follows that $\pi_{A}(x_{n},t) = t'$ for all $x \in \xn$. Since $A_{k-1}$ is synchronizing at level $1$, this forces, $|A_{k-1}| = 1$ which yields the desired contradiction. \qed
\end{proof}

\begin{cor}\label{cor:3letter}
	Let $A$ be a strongly synchronizing core automaton over the alphabet $X_{3}$ with more than one state. Let $\phi$ be any automorphism of $G_{A}$ that fixes vertices, then $\phi$ has order at most $2$.
\end{cor}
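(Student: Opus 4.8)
The plan is to exploit the fact that a vertex-fixing automorphism of $G_{A}$ acts independently on the bundle of outgoing edges at each vertex, and that over a three-letter alphabet these bundles are too small to support a permutation of order exceeding $2$. First I would record the local picture: since $\phi$ fixes every vertex, for an edge $e$ running from $p$ to $q$ the image $(e)\phi$ again runs from $((e)\iota)\phi = (p)\phi = p$ to $((e)\tau)\phi = (q)\phi = q$. Hence $\phi$ restricts, at each state $p$, to a permutation of the three outgoing edges of $p$ (those with input labels $0,1,2$, terminating at $\pi_{A}(0,p),\pi_{A}(1,p),\pi_{A}(2,p)$) that preserves the terminus of each edge.

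The crucial input is Lemma~\ref{lem:mutltiedges}: because $A$ is core, strongly synchronizing, and has more than one state, no ordered pair of vertices is joined by more than $n-1 = 2$ parallel edges. Thus, grouping the three outgoing edges at $p$ by their terminal vertex yields a partition all of whose blocks have size at most $2$; over $X_{3}$ the only admissible block shapes are $1+1+1$ and $2+1$. In the shape $1+1+1$ every outgoing edge is the unique edge to its terminus, so each is fixed by $\phi$ and the local restriction is trivial. In the shape $2+1$ the two parallel edges may at most be interchanged while the remaining edge is fixed, so the local restriction is either the identity or a single transposition. In both cases the restriction of $\phi$ to the outgoing edges at $p$ squares to the identity.

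Finally I would assemble these local statements globally. The outgoing-edge sets of the distinct vertices partition the edge set $E$, and $\phi$ maps each such set to itself; therefore $\phi^{2}$ fixes every edge, and since $\phi$ already fixes every vertex we conclude $\phi^{2} = \id$, giving $\phi$ order at most $2$. I do not expect a serious obstacle here: the entire content is the exclusion of a triple of parallel edges, which is exactly what Lemma~\ref{lem:mutltiedges} supplies, with the remaining argument being the routine bookkeeping that a vertex-fixing automorphism is a disjoint union of its local actions on outgoing-edge bundles.
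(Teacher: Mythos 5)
Your proof is correct and follows exactly the route the paper intends: the corollary is stated as an immediate consequence of Lemma~\ref{lem:mutltiedges}, and your argument---a vertex-fixing automorphism permutes each bundle of outgoing edges preserving termini, and the lemma caps each parallel class at $n-1=2$ edges, forcing the local (hence global) action to square to the identity---is precisely the bookkeeping the paper leaves implicit.
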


\begin{cor}\label{cor:decomposition3}
	Let $T \in \hn{3}$, $A$ be the underlying automaton of $T$ and $(A_{i})_{i \in \N}$ be the synchronizing sequence of $A$. Let  $k \in \N$ be minimal such that $|A_{k}|=1$. Then $T$ can be written as a product of a  single state transducer $U$ and at most $|A|-1$ elements of $\hn{n}$ of order $2$ which arise from vertex-fixing automorphisms  of directed graphs  which are amalgamations of $G_{A}$. 
\end{cor}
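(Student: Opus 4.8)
The plan is to keep the decomposition supplied by Theorem~\ref{Thm:decomposition} unchanged and only sharpen the description of the non-trivial factors in the case $n=3$. Applying Theorem~\ref{Thm:decomposition} to $T\in\hn{3}$ writes $T$ as a product of a single-state transducer $U$ together with at most $|A|-1$ factors, each of the form $H(B_i,\phi_i)$ where $\phi_i$ is a vertex-fixing automorphism of the graph $G_{B_i}$ of an amalgamation of $G_A$. The count $|A|-1$ is inherited verbatim, so the entire task reduces to showing that every such factor $H(B_i,\phi_i)$ is an element of order $2$ in $\hn{3}$.

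First I would check that each automaton $B_i$ arising in the decomposition is a strongly synchronizing core automaton with more than one state. Here $B_i$ is a term in the synchronizing sequence of the underlying automaton of $A^{-1}$, where $A$ is the minimal transducer in $\hn{3}$ current at that stage of the recursion; strong synchronization and the core property are then automatic, since every such sequence consists of foldings of a strongly synchronizing core automaton (as recorded after Theorem~\ref{thm:collapsingprocedure}). That $|B_i|>1$ follows from the construction in Lemma~\ref{Lemma:decompfindingautomorphism}, where $\phi_i$ is built from two \emph{distinct} states $t_1,t_2$ of $B_i$ containing $q_1^{-1}$ and $q_2^{-1}$ respectively; thus $B_i$ has at least two states. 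Over the three-letter alphabet Corollary~\ref{cor:3letter} then applies directly and bounds the order of $\phi_i$ by $2$. Moreover $\phi_i$ is non-trivial: the defining relation $\lambda_{A}(\cdot,q_2)\circ\lambda_{H(B_i,\phi_i)}(\cdot,[q_2^{-1}])=\lambda_{A}(\cdot,q_1)$, combined with $\lambda_A(\cdot,q_1)\neq\lambda_A(\cdot,q_2)$ (the states $q_1,q_2$ are distinct, hence non-$\omega$-equivalent, in the minimal transducer $A$), forces $\lambda_{H(B_i,\phi_i)}(\cdot,[q_2^{-1}])\neq\id$. Hence each $\phi_i$ has order exactly $2$.

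Finally I would transfer this order statement from the graph automorphism to the transducer. Since $B_i$ is strongly synchronizing and core, Proposition~\ref{p:fold} presents it as a folding of $G(3,m)$ with $m$ its minimal synchronizing level, so Theorem~\ref{thm:autfoldembedsinHn} furnishes an injective homomorphism $\aut{G_{B_i}}\to\hn{3}$ carrying $\phi_i$ to the minimal representative of $H(B_i,\phi_i)$. An injective homomorphism preserves orders, whence $H(B_i,\phi_i)$ has order $2$ in $\hn{3}$, and the corollary follows. The only genuinely delicate point is the bookkeeping that, at every stage of the iteration inside Theorem~\ref{Thm:decomposition}, the automaton producing the current factor is again strongly synchronizing, core, and has at least two states; once this is confirmed, the order bound is an immediate consequence of Corollary~\ref{cor:3letter} together with the order-preserving embedding of Theorem~\ref{thm:autfoldembedsinHn}.
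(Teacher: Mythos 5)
Your proof is correct and follows essentially the same route as the paper: the paper's own proof is simply ``repeated applications of Proposition~\ref{prop:decomposition} and Corollary~\ref{cor:3letter}'', and you carry out exactly this, merely making explicit the bookkeeping the paper leaves implicit (that each $B_i$ is strongly synchronizing, core, and has more than one state, and that the order bound transfers from $\phi_i$ to the transducer via the monomorphism of Theorem~\ref{thm:autfoldembedsinHn}).
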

\begin{proof}
	The proof follows by repeated applications of Proposition~\ref{prop:decomposition} and Corollary~\ref{cor:3letter}. \qed
\end{proof}

We generalize Corollary~\ref{cor:decomposition3} to all $n$. However, the number of elements of order $2$ required is bigger than the number of states  in general. We require first the following straight-forward observation.

\begin{lemma}
	Let $G$ be a directed graph and $\phi$ be an automorphism of $G$ that fixes vertices. Then $\phi$ can be written as a product of vertex-fixing automorphisms of $G$ of order $2$.
\end{lemma}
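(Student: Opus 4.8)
The plan is to exploit the fact that a vertex-fixing automorphism can only permute edges \emph{within} each class of parallel edges, which reduces the statement to the elementary fact that transpositions generate a finite symmetric group.

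First I would record the structural observation. Write $G=(V,E,\iota,\tau)$ and recall that $\phi=(\phi_V,\phi_E)$ with $\phi_V=\id$. For an ordered pair of vertices $(u,v)$, let $E_{u,v}:=\{e\in E\mid (e)\iota=u,\ (e)\tau=v\}$ be the set of edges from $u$ to $v$; these sets partition $E$. If $e\in E_{u,v}$, then the compatibility conditions $((e)\iota)\phi=((e)\phi)\iota$ and $((e)\tau)\phi=((e)\phi)\tau$, together with $\phi_V=\id$, force $(e)\phi$ to have origin $u$ and terminus $v$; that is, $(e)\phi\in E_{u,v}$. Hence $\phi_E$ restricts to a permutation $\phi_{u,v}$ of each finite set $E_{u,v}$, and conversely any choice of permutations $\psi_{u,v}\in\sym{E_{u,v}}$ assembles, together with $\phi_V=\id$, into a vertex-fixing automorphism of $G$. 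In other words, the group of vertex-fixing automorphisms of $G$ is canonically the direct product $\prod_{(u,v)}\sym{E_{u,v}}$, and $\phi$ corresponds to the tuple $(\phi_{u,v})_{(u,v)}$.

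Next, for each pair $(e,e')$ of distinct parallel edges lying in a common class $E_{u,v}$, let $\theta_{e,e'}$ denote the map which fixes every vertex, interchanges $e$ and $e'$, and fixes every other edge. This is a well-defined vertex-fixing automorphism (it preserves $E_{u,v}$ and acts as the identity on every other parallel class), and it has order exactly $2$. The standard fact that every permutation of a finite set is a product of transpositions then shows that each component $\phi_{u,v}$ is a product of transpositions of $E_{u,v}$, i.e.\ a product of maps of the form $\theta_{e,e'}$ with $e,e'\in E_{u,v}$.

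Finally I would assemble the decomposition. Since the factors $\sym{E_{u,v}}$ act on pairwise disjoint edge sets, they commute with one another inside $\prod_{(u,v)}\sym{E_{u,v}}$, so I may write $\phi$ as the product over all pairs $(u,v)$ of the automorphisms $\phi_{u,v}$, each extended by the identity off $E_{u,v}$; substituting for each $\phi_{u,v}$ its expression as a product of transpositions $\theta_{e,e'}$ yields $\phi$ as a product of vertex-fixing automorphisms of order $2$ (if $\phi=\id$ this is the empty product). There is no genuine obstacle here: the only point requiring care is finiteness, and in every application $G$ is a finite graph, so each $E_{u,v}$ is finite and the transposition-generation fact applies verbatim.
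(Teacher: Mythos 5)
Your proof is correct. Note that the paper itself gives no proof of this lemma at all — it is introduced as a ``straight-forward observation'' and stated without argument — and your proof is precisely the natural argument being gestured at: since $\phi$ fixes every vertex, the compatibility conditions force $\phi_E$ to preserve each parallel class $E_{u,v}$, so the group of vertex-fixing automorphisms is the direct product $\prod_{(u,v)}\sym{E_{u,v}}$, and generation by transpositions (each transposition of parallel edges being a vertex-fixing involution) finishes it. Your closing remark on finiteness is the right caveat and is harmless here, since every graph $G_A$ to which the lemma is applied in the paper underlies a finite automaton.
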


\begin{cor}\label{cor:decompositioninvolution}
	Let $T \in \hn{n}$, $(A_{i})_{i \in \N}$ be the synchronizing sequence of $A$ and  $k$ be minimal such that $A_{j} = A_{k}$ for all $j \ge k$. Then $T$ can be written as a product of a single state transducer $U$ with underlying automaton $A_{k}$, and elements of $\hn{n}$ of order $2$ arising from vertex-fixing automorphisms  of directed graphs  which are amalgamations of $G_{A}$. 
\end{cor}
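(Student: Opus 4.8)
The plan is to combine Theorem~\ref{Thm:decomposition} with the lemma immediately preceding this corollary (factoring a vertex-fixing automorphism into involutions), transporting the latter factorisation from the automorphism group of each amalgamation graph into $\hn{n}$ by means of the monomorphism supplied by Theorem~\ref{thm:autfoldembedsinHn}. The point is that Theorem~\ref{Thm:decomposition} already gives a decomposition of $T$ into vertex-fixing automorphism transducers; it only remains to break each such factor into involutions and to check that breaking it up in the automorphism group is faithfully mirrored by a factorisation in $\hn{n}$.

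First I would invoke Theorem~\ref{Thm:decomposition} to write
$$T = U \spnprod{n} H(B_{i_1}, \phi_1) \spnprod{n} \cdots \spnprod{n} H(B_{i_r}, \phi_r),$$
where $U$ is the single state transducer with underlying automaton $A_k$, $r \le |A|-1$, and each $\phi_s$ is a vertex-fixing automorphism of the graph $G_{B_{i_s}}$, which is an amalgamation of $G_A$. Each $B_{i_s}$ arises as a term of the synchronizing sequence of the underlying automaton of the inverse of an intermediate transducer occurring in the decomposition, and hence is strongly synchronizing and core, since all terms of the synchronizing sequence of a strongly synchronizing core automaton are again strongly synchronizing and core. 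By Proposition~\ref{p:fold}, each $B_{i_s}$ is therefore isomorphic to a folding of a de Bruijn graph $G(n,m)$, with $m$ its minimal synchronizing level, so that Theorem~\ref{thm:autfoldembedsinHn} applies to it.

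Next, for each $s$ I would apply the preceding lemma to the vertex-fixing automorphism $\phi_s$ of $G_{B_{i_s}}$, obtaining a factorisation $\phi_s = \psi_{s,1}\psi_{s,2}\cdots\psi_{s,m_s}$ into vertex-fixing automorphisms $\psi_{s,l}$ of $G_{B_{i_s}}$, each of order $2$. Since $B_{i_s}$ is a folding of a de Bruijn graph, Theorem~\ref{thm:autfoldembedsinHn} furnishes a monomorphism from $\aut{G_{B_{i_s}}}$ into $\hn{n}$ sending an automorphism $\phi$ to the minimal representative of $H(B_{i_s},\phi)$; as that map is a homomorphism for the $\spnprod{n}$ product, applying it to $\phi_s = \psi_{s,1}\cdots\psi_{s,m_s}$ yields
$$H(B_{i_s},\phi_s) = H(B_{i_s},\psi_{s,1}) \spnprod{n} \cdots \spnprod{n} H(B_{i_s},\psi_{s,m_s}).$$
Because the monomorphism is injective, each $H(B_{i_s},\psi_{s,l})$ has the same order as $\psi_{s,l}$, namely $2$ (discarding any $\psi_{s,l}$ that happens to be trivial). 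Substituting these factorisations back into the expression for $T$ presents $T$ as the product of $U$ with a (longer) list of order-$2$ elements of $\hn{n}$, each arising from a vertex-fixing automorphism of an amalgamation of $G_A$, as required.

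The step I expect to require the most care is the transport of the involution factorisation into $\hn{n}$: one must confirm that the group operation matched by the monomorphism of Theorem~\ref{thm:autfoldembedsinHn} is exactly the $\spnprod{n}$ product appearing in Theorem~\ref{Thm:decomposition}, so that the substitution above is a genuine equality of (classes of) transducers and not merely of induced maps, and that the injectivity of that monomorphism really does force each retained $\psi_{s,l}$ to map to an order-$2$ element. Everything else is routine bookkeeping of concatenating the factorisations obtained at the successive stages of the decomposition.
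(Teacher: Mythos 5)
Your proposal is correct and matches the paper's intended argument: the paper states this corollary without an explicit proof, precisely because it is meant to follow by combining Theorem~\ref{Thm:decomposition} with the preceding lemma on factoring vertex-fixing graph automorphisms into involutions, transported into $\hn{n}$ via the homomorphism property established in the proof of Theorem~\ref{thm:autfoldembedsinHn}. Your additional care over the compatibility of the monomorphism with the $\spnprod{n}$ product, and over order preservation under an injective homomorphism, correctly fills in exactly the details the paper leaves implicit.
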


It is possible to bound the number of involutions appearing in Corollary~\ref{cor:decompositioninvolution} in terms of $A$ (i.e the number of vertices and edges of $G_{A}$) but we have not attempted to do so.

\section{Counting foldings}\label{sec:counting}

Counting foldings of the de Bruijn graph $G(n,k)$ is an important and
challenging problem. We give here the solution for $k=1$ (which is trivial)
and for $k=2$.

The \emph{Bell number} $B(n)$ is the number of partitions of an $n$-set.
This well-studied combinatorial sequence is given by the recurrence relation
\[B(n)=\sum_{k=1}^n{n-1\choose k-1}B(n-k)\]
for $n>0$, with $B(0)=1$.

\begin{prop}
	The number of foldings of $G(n,1)$ is the Bell number $B(n)$.
\end{prop}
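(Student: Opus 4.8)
The plan is to exploit the fact that $G(n,1)$ has a completely degenerate transition structure, which renders the defining condition of a folding vacuous, so that \emph{every} equivalence relation on the state set is a folding. The count then reduces immediately to counting partitions of an $n$-set.

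First I would pin down the transition function of $G(n,1)$. By the definition of the de Bruijn graph, the vertex set of $G(n,1)$ is $X_n^1=X_n$, and the unique arc leaving a vertex $a_1$ with label $a_0$ terminates at the vertex $a_2\ldots a_m a_0$, which for $m=1$ is simply $a_0$. Hence
\[
\pi_{G(n,1)}(x,a)=x\qquad\text{for every state }a\in X_n\text{ and every letter }x\in X_n,
\]
that is, reading $x$ from any vertex whatsoever forces the state $x$. (This is exactly the automaton underlying $\Shift{n}$; it is synchronizing at level $1$, with $\mathfrak{s}_1(x)=x$.)

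Next I would verify the folding condition of Proposition~\ref{p:fold} against an \emph{arbitrary} equivalence relation $\equiv$ on $X_n$. Suppose $a\equiv a'$ and $\pi_{G(n,1)}(x,a)=b$, $\pi_{G(n,1)}(x,a')=b'$. Since the transition depends only on the letter $x$ and not on the current state, we have $b=x=b'$, so $b\equiv b'$ holds trivially. Thus the implication defining a folding is satisfied with no constraint on $\equiv$, and every equivalence relation on the $n$-element set $X_n$ is a folding of $G(n,1)$. Conversely every folding is by definition such an equivalence relation.

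Therefore the foldings of $G(n,1)$ are in bijection with the equivalence relations on $X_n$, equivalently with the partitions of an $n$-set, whose number is by definition the Bell number $B(n)$. The only thing requiring any verification here is the degeneracy of $\pi_{G(n,1)}$ and the consequent vacuity of the folding condition; once that is observed there is no genuine obstacle, since the enumeration is immediate.
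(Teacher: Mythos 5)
Your proof is correct and takes essentially the same approach as the paper, whose entire argument is the observation that any folding is a partition of $X_n$ and, conversely, any partition of $X_n$ is a folding; you have simply made explicit why the folding condition is vacuous, namely that $\pi_{G(n,1)}(x,a)=x$ depends only on the letter read and not on the state. (One minor slip: the condition you verify is the definition of a folding given in the text, not Proposition~\ref{p:fold}, which is the characterisation of strongly synchronizing core automata as folded de Bruijn graphs.)
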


\begin{proof}
	The vertex set is identified with $X_{n}$, so any folding is a partition of $X_n$;
	and clearly any partition of $X_n$ is a folding.\qed
\end{proof}

\begin{theorem}
	The number of foldings of the de Bruijn graph with word length $2$ over an
	alphabet of cardinality $n$ is
	\[\sum_\pi\prod_{i=1}^{|\pi|}R(|\pi|,|A_i|),\]
	where $\pi$ runs over partitions of the alphabet, $A_i$ is the $i$th part, and
	\[R(s,t)=\sum_\rho(-1)^{|\rho|-1}(|\rho|-1)!\prod_{i=1}^{|\rho|}B(|C_i|s),
	\]
	where $\rho$ runs over all partitions of $\{1,\ldots,t\}$, and $C_i$ is the
	$i$th part.
\end{theorem}

The formula is somewhat complicated, but values are easily computed (and
rapidly growing): the numbers for $n=1,\ldots,7$ are
$1$, $5$, $192$, $78721$, $519338423$, $82833228599906$,
$429768478195109381814$.

\begin{proof}
	
	We define a graph $\Gamma$ associated with a folding: the vertex set is the
	alphabet $X_n$, and two vertices $x$ and $y$ are joined if there exist $u$ and
	$v$ such that $ux\equiv vy$.
	
	\begin{center}
		\setlength{\unitlength}{1mm}
		\begin{picture}(70,50)
		\multiput(0,0)(50,0){2}{\line(0,1){50}}
		\multiput(0,0)(0,50){2}{\line(1,0){50}}
		\thicklines
		\multiput(0,20)(0,15){2}{\line(1,0){50}}
		\thinlines
		\color{blue}
		\put(10,35){\line(1,-1){15}}
		\put(20,25){\line(1,1){10}}
		\put(35,35){\line(2,-3){10}}
		\color{red}
		\put(26,22){\circle*{1}}
		\put(30,32){\circle*{1}}
		\put(42,32){\circle*{1}}
		\put(46,27){\circle*{1}}
		\color{black}
		\put(27,21.5){$ux$}
		\put(31,31.5){$vy$}
		\put(43,31.5){$py$}
		\put(45,24){$qz$}
		\put(52,26){$A_i$}
		\multiput(60,0)(10,0){2}{\line(0,1){50}}
		\multiput(60,0)(0,50){2}{\line(1,0){10}}
		\thicklines
		\multiput(60,20)(0,15){2}{\line(1,0){10}}
		\thinlines
		\color{red}
		\multiput(65,22)(0,5){3}{\circle*{1}}
		\put(65,27){\line(0,1){5}}
		\curve(65,22,62,27,65,32)
		\color{black}
		\put(65.5,21.8){$x$}
		\put(65.5,26.8){$z$}
		\put(65.5,31.8){$y$}
		\end{picture}
	\end{center}
	
	Let $\pi$ be the partition of $X_n$ into connected components of the graph
	$\Gamma$. If $A_i$ is a part of $\Gamma$, then the set $X_n\times A_i$ (the
	horizontal stripe in the figure) is a union of parts of the folding: no part
	can cross into a different horizontal stripe.
	
	Moreover, by the definition of a folding, we see that if $x,y\in A_i$, then
	$xw$ and $yw$ lie in the same part of the folding.
	
	\begin{center}
		\setlength{\unitlength}{1mm}
		\begin{picture}(50,55)
		\multiput(0,0)(50,0){2}{\line(0,1){50}}
		\multiput(0,0)(0,50){2}{\line(1,0){50}}
		\thicklines
		\multiput(20,0)(15,0){2}{\line(0,1){50}}
		\thinlines
		\color{blue}
		\put(20,24){\line(1,0){15}}
		\color{red}
		\multiput(22,24)(5,0){3}{\circle*{1}}
		\color{black}
		\put(20,25.5){$xw$} \put(25,21){$zw$} \put(29.5,25.5){$yw$}
		\put(25,52){$A_i$}
		\end{picture}
	\end{center}
	
	The sets $X_n\times A_i$ can be treated independently, so we have to count the
	number of good partitions of each and multiply them. Moreover, by the last
	remark, we can shrink each horizontal interval $A_j\times\{v\}$ to a point,
	so we have to partition $\pi\times A_i$.
	
	There are $B(|\pi|\cdot|A_i|)$ partitions of $\pi\times A_i$. We have to filter
	out the ones which do not induce partitions of $\pi\times B$ for any proper
	subset $B$ of $A_i$. By M\"obius inversion \cite[Section 3.7]{RStanley} over the lattice of partitions of
	$A_i$, we find that the number of these is $R(|\pi|,|A_i|)$, where $R$ is as
	defined earlier.
	
	Putting all this together gives the result.\qed
\end{proof}

Apart from this result, only a few values of the function counting foldings
are known: $G(2,3)$ has $30$ foldings, while $G(2,4)$ has
$1247$. (These numbers were obtained by brute-force computation.)

\section*{Acknowledgements}
The authors are grateful for partial support from EPSRC research grant EP/R032866/1. The third author is additionally grateful for support from Leverhulme Trust Research Project Grant RPG-2017-159 and for the warm hospitality of the University of Aberdeen where some of this research was conducted. We are also grateful to Mike Boyle, Matthew G. Brin,  Elliot Cawtheray, Timothy Gowers, and anonymous referees for comments on drafts of this article.

\bibliographystyle{amsplain}
\def\cprime{$'$} \def\cprime{$'$} \def\cprime{$'$} \def\cprime{$'$}
\providecommand{\bysame}{\leavevmode\hbox to3em{\hrulefill}\thinspace}
\providecommand{\MR}{\relax\ifhmode\unskip\space\fi MR }
\providecommand{\MRhref}[2]{%
  \href{http://www.ams.org/mathscinet-getitem?mr=#1}{#2}
}
\providecommand{\href}[2]{#2}


\begin{dajauthors}
\begin{authorinfo}[Collin]
  Collin Bleak\\
  University of St Andrews\\
  Scotland, United Kingdom\\
  cb211\imageat{}st-andrews\imagedot{}ac\imagedot{}uk \\
  \url{https://orcid.org/0000-0001-5790-1940}
\end{authorinfo}
\begin{authorinfo}[Peter]
  Peter J. Cameron\\
  Professor\\
  University of St Andrews\\
  Scotland, United Kingdom\\
  pjc20\imageat{}st-andrews\imagedot{}ac\imagedot{}uk \\
  \url{https://orcid.org/0000-0003-3130-9505}
\end{authorinfo}
\begin{authorinfo}[Shayo]
  Feyishayo Olukoya\\
  University of St Andrews\\
  Scotland, United Kingdom\\
  fo55\imageat{}st-andrews\imagedot{}ac\imagedot{}uk \\
  \url{https://orcid.org/0000-0003-3285-9023}
\end{authorinfo}

\end{dajauthors}

\end{document}